\def\namedlabel#1#2{\begingroup
   \def\@currentlabel{#2}%
   \label{#1}\endgroup
}
\theoremstyle{plain}
\newtheorem{thm}{Theorem}[section]
\newtheorem{lem}[thm]{Lemma}
\newtheorem{prop}[thm]{Proposition}
\theoremstyle{definition}
\newtheorem{defn}[thm]{Definition}
\theoremstyle{remark}
\setlist[enumerate,1]{leftmargin=2em}
\def\H{\mathfrak H}
\def\F{\mathbb F}
\def\e{\varepsilon}
\title[The additive DAHA and the Leonard triples]{The universal additive DAHA of type $(C_1^\vee,C_1)$ and Leonard triples}
\author{Si-Yao Huang}
\address{
Si-Yao Huang\\
Department of Mathematics\\
National Central University\\
Chung-Li 32001 Taiwan
}
\author{Hau-Wen Huang}
\address{
Hau-Wen Huang\\
Department of Mathematics\\
National Central University\\
Chung-Li 32001 Taiwan
}
\email{hauwenh@math.ncu.edu.tw}
\begin{document}
\begin{abstract}
Assume that $\F$ is an algebraically closed field with characteristic zero. The universal Racah algebra $\Re$ is a unital associative $\F$-algebra generated by $A,B,C,D$ and the relations state that 
$[A,B]=[B,C]=[C,A]=2D$ and each of 
$$
[A,D]+AC-BA,
\qquad
[B,D]+BA-CB,
\qquad
[C,D]+CB-AC
$$
is central in $\Re$. The universal additive DAHA (double affine Hecke algebra) $\H$ of type $(C_1^\vee,C_1)$ is a unital associative $\F$-algebra generated by $\{t_i\}_{i=0}^3$ and the relations state that 
\begin{gather*}
t_0+t_1+t_2+t_3 = -1,
\\
\hbox{$t_i^2$ is central for all $i=0,1,2,3$}.
\end{gather*}
Any $\H$-module can be considered as a $\Re$-module via the $\F$-algebra homomorphism $\Re\to \H$ given by
\begin{eqnarray*}
A &\mapsto & \frac{(t_0+t_1-1)(t_0+t_1+1)}{4},
\\
B &\mapsto & \frac{(t_0+t_2-1)(t_0+t_2+1)}{4},
\\
C &\mapsto & \frac{(t_0+t_3-1)(t_0+t_3+1)}{4}.
\end{eqnarray*}
Let $V$ denote a finite-dimensional irreducible $\H$-module. In this paper we show that $A,B,C$ are diagonalizable on $V$ if and only if $A,B,C$ act as Leonard triples on all composition factors of the $\Re$-module $V$. 
\end{abstract}

\thanks{The purpose of this unpublished work is only to make sure the existence of $q\to 1$ version of \cite{Huang:DAHA&LT} and this work is based on the master thesis by the first author under the supervision of the second author}

\maketitle

{\footnotesize{\bf Keywords:} additive double affine Hecke algebras, Racah algebras, Leonard pairs, Leonard triples.}

{\footnotesize{\bf MSC2020:} 16G30, 33D45, 33D80, 81R10, 81R12.}

\allowdisplaybreaks

\section{Introduction}\label{s:introduction}

Throughout this paper, we adopt the following conventions: Assume that the ground field $\F$ is algebraically closed with characteristic zero. An algebra is meant to be a unital associative algebra over $\F$. An algebra homomorphism is meant to be a unital algebra homomorphism.

\begin{defn}
[Definition 2.1, \cite{SH:2019-1}]
\label{defn:UAW}
The  {\it universal Racah algebra} $\Re$ is an algebra defined by generators and relations in the following way: The generators are $A, B, C, D$ and the relations state that
\begin{gather}\label{central}
[A,B]=[B,C]=[C,A]=2D
\end{gather}
and each of
\begin{gather*}\label{UAW_central}
[A,D]+AC-BA,
\qquad
[B,D]+BA-CB,
\qquad
 [C,D]+CB-AC
\end{gather*}
commutes with $A, B, C, D$.

Define
\begin{gather}\label{UAW_central}
\delta=A+B+C.
\end{gather}
Using (\ref{central}) yields that $\delta$ is central in $\Re$.
\end{defn}

\begin{defn}
[\!\!\cite{BI&NW2016, Groenevelt2007}] 
\label{defn:H}
The {\it universal additive DAHA (double affine Hecke algebra) $\H$ of type $(C_1^\vee,C_1)$} is an algebra defined by generators and relations. The generators are $t_0,t_1,t_2,t_3$ and the relations state that 
\begin{gather}\label{t0123}
t_0+t_1+t_2+t_3=-1
\end{gather}
and each of $t_0^{2},t_1^{2},t_2^{2},t_3^{2}$ commutes with $t_0,t_1,t_2,t_3$.
\end{defn}

\begin{thm}
[\!\!\cite{R&BI2015, Huang:R<BI}]
\label{thm:hom}
There exists a unique algebra homomorphism $\zeta:\Re\to \H$ that sends 
\begin{eqnarray*}
A &\mapsto &\frac{(t_0+t_1-1)(t_0+t_1+1)}{4},
\\
B &\mapsto & \frac{(t_0+t_2-1)(t_0+t_2+1)}{4},
\\
C &\mapsto & \frac{(t_0+t_3-1)(t_0+t_3+1)}{4},
\\
\delta &\mapsto & \frac{t_0^{2}+t_1^{2}+t_2^{2}+t_3^{2}}{4}-\frac{t_0}{2}-\frac{3}{4}.
\end{eqnarray*}
\end{thm}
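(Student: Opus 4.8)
The plan is to prove Theorem~\ref{thm:hom} directly from the presentation of $\Re$ in Definition~\ref{defn:UAW}. Since $\Re$ is generated by $A,B,C,D$, a homomorphism is determined by the images of these four generators; and because relation~(\ref{central}) forces $\zeta(D)=\tfrac12[\zeta(A),\zeta(B)]$ while $\delta=A+B+C$ forces $\zeta(\delta)=\zeta(A)+\zeta(B)+\zeta(C)$, uniqueness is immediate once the images of $A,B,C$ are fixed. For existence I would set
\[
\hat A=\frac{(t_0+t_1)^2-1}{4},\qquad \hat B=\frac{(t_0+t_2)^2-1}{4},\qquad \hat C=\frac{(t_0+t_3)^2-1}{4},\qquad \hat D=\tfrac12[\hat A,\hat B],
\]
and verify that $\hat A,\hat B,\hat C,\hat D$ satisfy the defining relations of Definition~\ref{defn:UAW} inside $\H$; by the universal property of the presentation this produces the desired $\zeta$. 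As an orienting consistency check I would first confirm $\hat A+\hat B+\hat C$ agrees with the stated image of $\delta$: expanding the squares, using that $t_0^{2},t_1^{2},t_2^{2},t_3^{2}$ are central and that $t_1+t_2+t_3=-1-t_0$ collapses the cross terms to $-2t_0-2t_0^{2}$, which yields exactly $\tfrac14(t_0^{2}+t_1^{2}+t_2^{2}+t_3^{2})-\tfrac{t_0}{2}-\tfrac34$.

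Before the main computation I would record three simplifications that keep the bookkeeping under control. First, since each $t_i^{2}$ is central, $4\hat A=\{t_0,t_1\}+(t_0^{2}+t_1^{2}-1)$ with $\{x,y\}=xy+yx$, and likewise for $\hat B,\hat C$; hence the central scalar parts drop out of every commutator and it suffices to control the three anticommutators $\{t_0,t_1\},\{t_0,t_2\},\{t_0,t_3\}$. Second, the identity $[t_0,\{t_0,x\}]=[t_0^{2},x]=0$ shows that $t_0$ commutes with $\hat A,\hat B,\hat C$, so $t_0$ is central in the image $\zeta(\Re)$ and may be moved freely during normal-ordering. Third, every permutation of $\{0,1,2,3\}$ preserves the relations of $\H$ and therefore acts by automorphisms, and the three-cycle on $\{1,2,3\}$ fixing $0$ cyclically permutes $(\hat A,\hat B,\hat C)$; this reduces the three commutator relations $[\hat A,\hat B]=[\hat B,\hat C]=[\hat C,\hat A]$ to one equality plus an application of this automorphism, and similarly reduces the three centrality conditions to a single one.

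The heart of the argument, and the step I expect to be the \emph{main obstacle}, is the explicit verification of the remaining relations by normal-ordering in $\H$. For the commutator relation I would expand $[\hat A,\hat B]=\tfrac1{16}[\{t_0,t_1\},\{t_0,t_2\}]$, repeatedly commute the central element $t_0^{2}$ to the front, and check that after substituting $t_3=-1-t_0-t_1-t_2$ the outcome is symmetric enough to make all three commutators coincide, thereby defining $2\hat D$ unambiguously. For the centrality conditions I would compute $[\hat A,\hat D]+\hat A\hat C-\hat B\hat A$ (and its two cyclic images) in full and expect it to collapse to a polynomial in the central elements $t_0^{2},t_1^{2},t_2^{2},t_3^{2}$ alone; once it is in that form it is manifestly central in $\H$, which is stronger than, and hence implies, the centrality required by Definition~\ref{defn:UAW}. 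The genuine difficulty is organizing these degree-three noncommutative expansions without error, and the three simplifications above are the tools I would lean on. Should the direct expansion prove unwieldy, an alternative is to route the verification through the Bannai--Ito algebra realization underlying \cite{R&BI2015,Huang:R<BI}, in which the anticommutators $\{t_0,t_i\}$ are already known to satisfy the Racah relations.
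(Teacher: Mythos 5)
First, a point of comparison: the paper offers no proof of Theorem~\ref{thm:hom} at all --- it is imported verbatim from \cite{R&BI2015, Huang:R<BI} --- so your attempt cannot be measured against an in-paper argument, only against what a complete proof would require. Your framework is sound: uniqueness does follow because $D=\tfrac12[A,B]$ and $\delta=A+B+C$ make the images of $A,B,C$ determine everything; the $\delta$-consistency check is computed correctly; the observation $[t_0,\{t_0,x\}]=[t_0^2,x]=0$ is correct and genuinely useful; and the reduction of the six relations to two via the $3$-cycle on $\{1,2,3\}$ is legitimate. In fact you underuse your own tools: the commutator relations are \emph{not} part of the ``main obstacle.'' Since $t_0$ commutes with $\hat A,\hat B,\hat C$ and each $t_i^2$ is central, your $\delta$-computation already shows that $\hat\delta:=\hat A+\hat B+\hat C=\tfrac14(t_0^2+t_1^2+t_2^2+t_3^2)-\tfrac12 t_0-\tfrac34$ commutes with $\hat A,\hat B,\hat C$, whence $[\hat A,\hat B]-[\hat B,\hat C]=[\hat A+\hat C,\hat B]=[\hat\delta-\hat B,\hat B]=0$ in one line; no normal-ordering of $\tfrac1{16}[\{t_0,t_1\},\{t_0,t_2\}]$ is needed.

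The genuine gap is the other half: the centrality of $\hat\alpha=[\hat A,\hat D]+\hat A\hat C-\hat B\hat A$ (and hence, by the automorphism, of $\hat\beta,\hat\gamma$). This is where essentially all the content of the theorem sits --- it is a degree-four identity in the noncommuting $t_i$ --- and your proposal only says you ``expect it to collapse to a polynomial in the central elements $t_0^2,t_1^2,t_2^2,t_3^2$.'' That expectation happens to be correct (it is the additive analogue of the known fact that the Casimir-type elements of the Racah algebra land in the center of the Bannai--Ito/additive DAHA picture), but as written nothing in the proposal establishes it: no computation is performed, no closed form for $\hat\alpha$ is exhibited, and the fallback of ``routing through the Bannai--Ito realization'' is exactly the citation the theorem already carries, so it cannot serve as an independent proof. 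To close the gap you must actually carry out the expansion --- for instance, write $4\hat A=\{t_0,t_1\}+(t_0^2+t_1^2-1)$, $4\hat B=\{t_0,t_2\}+(t_0^2+t_2^2-1)$, use $t_3=-1-t_0-t_1-t_2$ to eliminate $t_3$ from $\hat C$, and reduce $16\,\hat\alpha$ to an explicit element of the subalgebra generated by $t_0^2,t_1^2,t_2^2,t_3^2$ --- or else present a structural argument (e.g.\ via a faithful family of modules on which both sides are computable) that forces the identity. Until one of these is done, the proof is a plan rather than a proof.
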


Note that $\zeta$ is the Racah version of the algebra homomorphism from the universal Askey--Wilson algebra $\triangle_q$ into the universal DAHA $\H_q$. Let $V$ denote a finite-dimensional irreducible $\H_q$-module. In  \cite{Huang:DAHA&LT,daha&LP} it was shown that the defining generators of $\triangle_q$ are diagonalizable on $V$ if and only if the defining generators of $\triangle_q$ act as Leonard triples on all composition factors of the $\triangle_q$-module $V$.

In this paper we are devoted to giving the Racah version of \cite{Huang:DAHA&LT,daha&LP}.
We begin with recalling the notion of Leonard pairs and Leonard triples. We will use the following terms: A square matrix is said to be {\it tridiagonal} if each nonzero entry lies on either the diagonal, the superdiagonal, or the subdiagonal. A tridiagonal matrix is said to be {\it irreducible} if each entry on the superdiagonal is nonzero and each entry on the subdiagonal is nonzero. Let $V$ denote a vector space over $\F$ with finite positive dimension. An eigenvalue $\theta$ of a linear operator $L:V\to V$ is said to be {\it multiplicity-free} if the algebraic multiplicity of $\theta$ is equal to $1$. A linear operator $L:V\to V$ is said to be {\it  multiplicity-free} if all eigenvalues of $L$ are multiplicity-free.

\begin{defn}
[Definition 1.1, \cite{lp2001}]
\label{defn:lp}
Let $V$ denote a vector space over $\F$ with finite positive dimension. By a {\it Leonard pair} on $V$, we mean a pair of linear operators $L: V \to V$ and $L^* : V \to V$ that satisfy both (i), (ii) below.
\begin{enumerate}
\item There exists a basis for $V$ with respect to which the matrix representing $L$ is diagonal and the matrix representing $L^*$ is irreducible tridiagonal. 

\item There exists a basis for $V$ with respect to which the matrix representing $L^*$  is diagonal and the matrix representing $L$ is irreducible tridiagonal.
\end{enumerate}
\end{defn}

\begin{defn}
[Definition 1.2, \cite{cur2007}]
\label{defn:lt}
Let $V$ denote a vector space over $\F$ with finite positive dimension. 
By a {\it Leonard triple} on $V$, we mean a triple of linear operators $L:V\to V,L^* :V\to V,L^\e:V\to V$ that satisfy the conditions (i)--(iii) below.
\begin{enumerate}
\item There exists a basis for $V$ with respect to which the matrix representing $L$ is diagonal and the matrices representing $L^* $ and $L^\e$ are irreducible tridiagonal.

\item There exists a basis for $V$ with respect to which the matrix representing $L^* $ is diagonal and the matrices representing $L^\e$ and $L$ are irreducible tridiagonal.

\item There exists a basis for $V$ with respect to which the matrix representing $L^\e$ is diagonal and the matrices representing $L$ and $L^* $ are irreducible tridiagonal.
\end{enumerate}
\end{defn}

The main results of this paper are as follows:

\begin{thm}\label{thm:diagonal}
Suppose that $V$ is a finite-dimensional irreducible $\H$-module. Then the following are equivalent:
\begin{enumerate}
\item $A$ {\rm (}resp. $B${\rm )} {\rm (}resp. $C${\rm )}
is diagonalizable on $V$.

\item $A$ {\rm (}resp. $B${\rm )} {\rm (}resp. $C${\rm )} is diagonalizable on all composition factors of the $\Re$-module $V$.

\item $A$ {\rm (}resp. $B${\rm )} {\rm (}resp. $C${\rm )} is multiplicity-free on all composition factors of the $\Re$-module $V$.
\end{enumerate}
\end{thm}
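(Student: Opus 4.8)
The plan is to establish the easy implications (iii)$\Rightarrow$(ii) and (i)$\Rightarrow$(ii) directly and to concentrate all the work on the two substantive facts (ii)$\Rightarrow$(iii) and (ii)$\Rightarrow$(i); by the evident symmetry of Definition \ref{defn:H} under permuting $t_1,t_2,t_3$ (which permutes $A,B,C$) it suffices to argue for $A$. First I would use that $V$ is a finite-dimensional irreducible $\H$-module over the algebraically closed field $\F$, so Schur's lemma makes each central element $t_i^{2}$ act as a scalar $a_i\in\F$. Then
\[
A=\frac{(t_0+t_1)^2-1}{4}=\frac{t_0t_1+t_1t_0+a_0+a_1-1}{4},
\]
so the eigenstructure of $A$ is that of $t_0t_1+t_1t_0$. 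The implication (iii)$\Rightarrow$(ii) is immediate (a multiplicity-free operator is diagonalizable), and (i)$\Rightarrow$(ii) holds because the minimal polynomial of $A$ on an $\Re$-subquotient of $V$ divides its minimal polynomial on $V$, which has simple roots.

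The engine is the observation that $t_0$ commutes with $A,B,C$ \emph{as an identity in $\H$}: since $t_0^{2}$ is central one has $[t_0,(t_0+t_j)^2]=[t_0^{2},t_j]=0$ for $j=1,2,3$, whence $[t_0,A]=[t_0,B]=[t_0,C]=0$ and $t_0$ is an $\Re$-module endomorphism of $V$ with $t_0^{2}=a_0$. I would next show that $t_0$ acts as a scalar on every composition factor of the $\Re$-module $V$: if $a_0\neq0$ the two $t_0$-eigenspaces $V^{\pm}=\ker(t_0\mp\sqrt{a_0})$ are $\Re$-submodules on which $t_0$ is scalar, and every factor of $V$ is a factor of $V^{+}$ or $V^{-}$; if $a_0=0$ then $\operatorname{im}t_0\subseteq\ker t_0$ is an $\Re$-submodule filtration along which the nilpotent endomorphism $t_0$ strictly descends, so $t_0$ vanishes on each factor. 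Hence on an irreducible factor $U$ the operators $A,B,C$ are affine functions of $t_1,t_2,t_3$ respectively; as $t_1+t_2+t_3$ is scalar there and each $t_i^{2}$ is scalar, the image of $\zeta(\Re)$ in $\operatorname{End}(U)$ lies in the algebra generated by $t_1,t_2$, which is spanned by $1,t_1,t_2,t_1t_2$ and so has dimension at most four. Since $U$ is irreducible over it, $\dim U\le2$. This yields (ii)$\Rightarrow$(iii): if $A$ were diagonalizable with a repeated eigenvalue it would be scalar on $U$, forcing $D=\tfrac12[A,B]=0$ and hence, by \eqref{central}, $[B,C]=0$; then $A,B,C$ would pairwise commute and $U$ would be one-dimensional, a contradiction, so $A$ is multiplicity-free on $U$.

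It remains to prove (ii)$\Rightarrow$(i), and this is where the difficulty lies: I must upgrade diagonalizability on every $\Re$-composition factor to diagonalizability on all of $V$. I would split by $t_0$. When $a_0\neq0$ and $a_1\neq0$ the operator $t_1$ is automatically semisimple (as $t_1^{2}=a_1\neq0$), so $A$, being affine in $t_1$ on each $V^{\pm}$, is already diagonalizable and (i) holds unconditionally. The remaining cases reduce, after passing to $V^{\pm}$ (or to the $t_0$-filtration when $a_0=0$), to showing that the nilpotent part $N$ of $A$ vanishes; here $N$ is a polynomial in $A$, so it commutes with $t_0$ and $t_1$ and, by (ii) together with the compatibility of the Jordan--Chevalley decomposition with subquotients, it annihilates every $\Re$-composition factor, i.e.\ it strictly lowers an $\Re$-composition series.

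The hard part will be to deduce $N=0$ from the irreducibility of $V$ as an $\H$-module. The obstacle is that $N$ need not commute with $t_2$ or $t_3$, so Schur's lemma does not apply directly and a purely $\Re$-module argument cannot succeed; one genuinely needs the generators that move between $\Re$-constituents. My plan is to exploit the intertwining relation $t_0x+xt_0=x^{2}+a_0-a_1$ with $x=t_0+t_1$, which shows that $t_0$ exchanges the two square roots $\pm\sqrt{4\theta+1}$ of $x^{2}$ on each generalized eigenspace of $A$, together with the constraint $t_0+t_1+t_2+t_3=-1$, to show that a nonzero $N$ would produce a proper nonzero $\H$-submodule of $V$, contradicting irreducibility. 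The two-dimensional case, in which the identity $\sum_i\operatorname{tr}(t_i)=-\dim V$ already rules out a Jordan block for $A$, is the guiding model, and the full argument is expected to parallel the $q$-deformed treatment in \cite{Huang:DAHA&LT}.
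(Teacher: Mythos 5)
Your observation that $[t_0,A]=[t_0,B]=[t_0,C]=0$ in $\H$ (so that $t_0$ is an $\Re$-endomorphism of $V$ and $V$ splits into $\Re$-submodules $V^{\pm}$ when $a_0\neq 0$) is correct and is not something the paper uses; unfortunately the two substantive implications built on it both fail. First, the bound $\dim U\le 2$ for an $\Re$-composition factor $U$ is false: by Theorem \ref{thm:CF_even}(ii) the $\Re$-module $E_d(a,b,c)$ has a composition factor isomorphic to $R_{\frac{d+1}{2}}$, of dimension $\frac{d+3}{2}$. The argument breaks in two places. The element $t_1$ does not preserve $V^{\pm}$ (it anticommutes with nothing and commutes with nothing relevant), so $t_1$ does not act on an $\Re$-composition factor; only the compression $P^{\pm}t_1P^{\pm}$ enters the formula for $A|_{V^{\pm}}$, and compressions do not multiply, so no algebra generated by ``$t_1,t_2$'' acts on $U$. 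Moreover, even inside $\H$ the subalgebra generated by $t_1,t_2$ with $t_1^2,t_2^2$ scalar is not spanned by $1,t_1,t_2,t_1t_2$: it contains all alternating words $t_1t_2t_1\cdots$, since $t_1t_2+t_2t_1$ is not central. Hence your proof of (ii)$\Rightarrow$(iii) collapses (the paper obtains this implication at no cost from the cited Lemma \ref{lem:dia_UAW}).

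Second, the claim that $a_0\neq0$ and $a_1\neq0$ make $A$ ``already diagonalizable'' is false and contradicts the theorem you are proving: $t_0$ and $t_1$ are each semisimple but do not commute, so $t_0+t_1$, and with it $A=\frac{(t_0+t_1)^2-1}{4}$, need not be semisimple. Concretely, for $E_d(a,b,c)$ irreducible with $a\neq0$ and $2a\in\{d-1,d-3,\ldots,1-d\}$ one has $t_0^2=\frac{(d+1)^2}{4}\neq0$ and $t_1^2=a^2\neq0$, yet $A$ is not diagonalizable by Lemmas \ref{lem:t1t0_diag} and \ref{lem:diag_ABC_E}. Finally, the genuinely hard direction (iii)$\Rightarrow$(i) is left as a program (``the hard part will be to deduce $N=0$\dots''): the anticommutation identity $t_0x+xt_0=x^2+a_0-a_1$ is correct but no argument is given, whereas the paper's proof runs through the classification of irreducible $\H$-modules into the families $E_d(a,b,c)^{\e}$ and $O_d(a,b,c)$, the identification of their $\Re$-composition factors (Theorems \ref{thm:CF_even} and \ref{thm:CF_odd}), and the explicit eigenvalue and eigenspace computations of \S\ref{s:even} and \S\ref{s:odd}. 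As it stands the proposal does not establish the theorem.
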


\begin{thm}\label{thm:lp}
Suppose that $V$ is a finite-dimensional irreducible $\H$-module. Then the following are equivalent:
\begin{enumerate}
\item $A,B$ {\rm (}resp. $B,C${\rm )} {\rm (}resp. $C,A${\rm )}
are diagonalizable on $V$.

\item $A,B$ {\rm (}resp. $B,C${\rm )} {\rm (}resp. $C,A${\rm )} are diagonalizable on all composition factors of the $\Re$-module $V$.

\item $A,B$ {\rm (}resp. $B,C${\rm )} {\rm (}resp. $C,A${\rm )} are multiplicity-free on all composition factors of the $\Re$-module $V$.

\item $A,B$ {\rm (}resp. $B,C${\rm )} {\rm (}resp. $C,A${\rm )} act as Leonard pairs on all composition factors of the $\Re$-module $V$.
\end{enumerate}
\end{thm}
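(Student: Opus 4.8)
The plan is to prove the pair version only for $(A,B)$: both $\Re$ (via the cyclic automorphism $A\to B\to C\to A$, $D\to D$, which preserves the relations \eqref{central}) and $\H$ (via the automorphism cyclically permuting $t_1,t_2,t_3$ and fixing $t_0$, which is compatible with $\zeta$) carry an evident cyclic symmetry that reduces the $(B,C)$ and $(C,A)$ cases to this one. The equivalences $(i)\Leftrightarrow(ii)\Leftrightarrow(iii)$ then follow immediately from Theorem \ref{thm:diagonal}: applying that theorem once to $A$ and once to $B$ shows that, as statements about a single operator, (i), (ii), (iii) coincide, and the pair-statements here are just the conjunctions of the two single-operator statements. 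Likewise $(iv)\Rightarrow(iii)$ is the standard fact that in a Leonard pair each of the two operators is multiplicity-free. Thus the entire content is $(iii)\Rightarrow(iv)$, which I will verify on each composition factor $W$ of the $\Re$-module $V$; by $(iii)$, $W$ is an irreducible $\Re$-module on which both $A$ and $B$ are multiplicity-free, hence diagonalizable with mutually distinct eigenvalues.

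Since $W$ is irreducible, every central element of $\Re$ acts as a scalar; in particular $\delta$ acts as some $\bar\delta$, so that $C=\bar\delta-A-B$, while $D=\tfrac12[A,B]$ by \eqref{central}. Feeding these two substitutions into the centrality of $[A,D]+AC-BA$ (which acts as a scalar $\alpha$) and clearing $D$ and $C$ yields an identity of the shape
\begin{equation*}
A^2B-2ABA+BA^2 = 2A^2+2AB+2BA-2\bar\delta A+2\alpha .
\end{equation*}
Evaluating the $(i,j)$-entry of both sides in an eigenbasis $v_0,\dots,v_d$ of $A$, with $Av_i=\theta_iv_i$, the left side contributes $(\theta_i-\theta_j)^2B_{ij}$ while the off-diagonal part of the right side contributes $2(\theta_i+\theta_j)B_{ij}$; hence for $i\neq j$
\begin{equation*}
\bigl[(\theta_i-\theta_j)^2-2(\theta_i+\theta_j)\bigr]B_{ij}=0,
\end{equation*}
so $B_{ij}=0$ unless $(\theta_i-\theta_j)^2=2(\theta_i+\theta_j)$.

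From here the argument is combinatorial. For fixed $\theta_j$ this equation is quadratic in $\theta_i$, so each eigenvalue is adjacent (in the sense of this relation) to at most two others; writing $\theta_k=\tfrac14(s_k^2-1)$, which is legitimate since $\F$ is algebraically closed, adjacency is equivalent to $s_i^2=(s_j\pm2)^2$, and a short computation in the $s$-parameter using $\mathrm{char}\,\F=0$ and the distinctness of the $\theta_k$ shows the resulting degree-two graph contains no cycle. Because $W$ is irreducible, the subgraph recording the nonvanishing of the $B_{ij}$ must meet every vertex and be connected, for otherwise the span of the eigenvectors indexed by a connected component would be a proper nonzero $\Re$-submodule. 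A connected, spanning, acyclic graph of maximum degree two is a single path; ordering $v_0,\dots,v_d$ along it makes $B$ tridiagonal, and if some off-diagonal entry along the path vanished the span of the corresponding initial (or terminal) segment would again be a proper nonzero $\Re$-submodule. Hence $B$ is irreducible tridiagonal with respect to the eigenbasis of $A$, giving condition (i) of Definition \ref{defn:lp}. Running the identical computation with the central element $[B,D]+BA-CB$ and the eigenbasis of the multiplicity-free operator $B$ produces the same relation $(\eta_i-\eta_j)^2=2(\eta_i+\eta_j)$ for the eigenvalues $\eta_i$ of $B$, whence $A$ is irreducible tridiagonal with respect to the eigenbasis of $B$, giving condition (ii). Therefore $A,B$ act as a Leonard pair on $W$.

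The main obstacle is precisely this passage from the single algebraic constraint ``$B_{ij}=0$ unless $(\theta_i-\theta_j)^2=2(\theta_i+\theta_j)$'' to the rigid irreducible-tridiagonal shape: one must simultaneously bound the adjacency graph's degree, exclude cycles (where the specific coefficient and $\mathrm{char}\,\F=0$ enter), and, most importantly, exploit the irreducibility of $W$ to force both connectedness and the non-vanishing of every super- and sub-diagonal entry. By contrast the remaining inputs—Theorem \ref{thm:diagonal} for $(i)\Leftrightarrow(ii)\Leftrightarrow(iii)$ and the multiplicity-freeness of the operators of a Leonard pair for $(iv)\Rightarrow(iii)$—are comparatively routine.
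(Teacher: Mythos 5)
Your proposal is correct in outline, but for the one substantive implication it takes a genuinely different route from the paper. The paper disposes of Theorem~\ref{thm:lp} in one line by citing Theorem~\ref{thm:diagonal} together with Proposition~\ref{lem:lp}, and the latter is proved \emph{using the classification}: by Theorem~\ref{thm:onto_UAW} every composition factor is some irreducible $R_{d'}(a',b',c')$, whose $A$-eigenvalues are explicitly $\theta_i=(a'+\tfrac{d'}{2}-i)(a'+\tfrac{d'}{2}-i+1)$ in a prescribed order; applying $v_i$ to relation (\ref{AB}) then gives $(A-\theta_{i-1})(A-\theta_{i+1})Bv_i\in\F v_i$ directly, so $[B]$ is tridiagonal in that order with no combinatorics needed, and irreducibility of the tridiagonal matrix follows from irreducibility of the module exactly as you argue. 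Your version instead works on an abstract irreducible $\Re$-module with $A,B$ multiplicity-free, extracts the same relation (\ref{AB}), and recovers the ordering of the eigenvalues a posteriori via the adjacency condition $(\theta_i-\theta_j)^2=2(\theta_i+\theta_j)$ and a degree-two, acyclic, connected-spanning graph argument. This is a legitimate and more self-contained route (it never invokes Proposition~\ref{prop:UAWd} or Theorem~\ref{thm:onto_UAW} for this step), at the cost of having to establish the combinatorial rigidity that the classification hands the authors for free. The reduction to the $(A,B)$ case via the cyclic automorphisms of $\Re$ and $\H$, the derivation of $(i)\Leftrightarrow(ii)\Leftrightarrow(iii)$ from Theorem~\ref{thm:diagonal}, and $(iv)\Rightarrow(iii)$ (which in this paper is more cleanly routed as $(iv)\Rightarrow(ii)$ from Definition~\ref{defn:lp} followed by Lemma~\ref{lem:dia_UAW}) all match the paper's logic.

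The one place your writeup is genuinely thin is the assertion that the adjacency graph contains no cycle. It is true: with $\theta=(s^2-1)/4$ the neighbors of $\theta$ are $f(s+2)$ and $f(s-2)$ where $f(s)=(s^2-1)/4$, and since $f(-s)=f(s)$ the orbit structure under $s\mapsto s\pm2$ and $s\mapsto -s$ in characteristic zero yields only paths and rays (with at worst a harmless self-loop when $s$ is an odd integer). But this is the load-bearing step of your approach and needs to be written out, since it is exactly what substitutes for the paper's explicit eigenvalue array.
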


\begin{thm}\label{thm:lt}
Suppose that $V$ is a finite-dimensional irreducible $\H$-module. Then the following are equivalent:
\begin{enumerate}
\item $A,B,C$
are diagonalizable on $V$.

\item $A,B,C$ are diagonalizable on all composition factors of the $\Re$-module $V$.

\item $A,B,C$ are multiplicity-free on all composition factors of the $\Re$-module $V$.

\item $A,B,C$ act as Leonard triples on all composition factors of the $\Re$-module $V$.
\end{enumerate}
\end{thm}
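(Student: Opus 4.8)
The plan is to reduce the equivalence of (i), (ii), (iii) to the single-operator statement Theorem~\ref{thm:diagonal}, and to treat the fourth condition by combining the pairwise statement Theorem~\ref{thm:lp} with the one genuinely new ingredient, namely the centrality of $\delta=A+B+C$. For (i) $\Leftrightarrow$ (ii) $\Leftrightarrow$ (iii) I would apply Theorem~\ref{thm:diagonal} separately to $A$, to $B$, and to $C$. For each single generator this gives the equivalence of diagonalizability on $V$, diagonalizability on every composition factor of the $\Re$-module $V$, and multiplicity-freeness on every composition factor; forming the conjunction of the three instances over $A,B,C$ yields precisely the equivalence of (i), (ii), (iii).

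For (iv) $\Rightarrow$ (iii): if $A,B,C$ act as a Leonard triple on a composition factor $W$, then by Definition~\ref{defn:lt} each of the pairs $A,B$ and $B,C$ and $C,A$ acts as a Leonard pair on $W$, since conditions (i)--(iii) of that definition supply, for each pair, one basis diagonalizing one member with the other irreducible tridiagonal and a second basis reversing their roles. Hence condition (iv) of Theorem~\ref{thm:lp} holds for each of the three pairs, and the implication (iv) $\Rightarrow$ (iii) of Theorem~\ref{thm:lp} makes each of $A,B,C$ multiplicity-free on $W$; this is (iii).

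The heart of the argument is (iii) $\Rightarrow$ (iv). Fix a composition factor $W$, which is an irreducible $\Re$-module. From (iii), each pair $A,B$, $B,C$, $C,A$ is multiplicity-free on $W$, so the implication (iii) $\Rightarrow$ (iv) of Theorem~\ref{thm:lp} shows that each of these pairs acts as a Leonard pair on $W$. Because $\delta=A+B+C$ is central in $\Re$ and $W$ is irreducible, Schur's lemma gives a scalar $\delta_W$ with $A+B+C=\delta_W I$ on $W$, so that $C=\delta_W I-A-B$ on $W$, together with its two cyclic variants. Now take the basis of $W$, provided by the Leonard pair $A,B$, in which $A$ is diagonal and $B$ is irreducible tridiagonal. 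In this basis $\delta_W I-A$ is diagonal, so $C=\delta_W I-A-B$ has the same superdiagonal and subdiagonal as $-B$; these entries are nonzero, whence $C$ is irreducible tridiagonal. Thus $A$ is diagonal while $B$ and $C$ are irreducible tridiagonal, which is condition (i) of Definition~\ref{defn:lt}. The other two conditions are obtained identically: in the basis where $B$ is diagonal and $A$ is irreducible tridiagonal one finds $C=\delta_W I-A-B$ irreducible tridiagonal, giving condition (ii); and in the basis furnished by the Leonard pair $C,A$ where $C$ is diagonal and $A$ is irreducible tridiagonal one finds $B=\delta_W I-A-C$ irreducible tridiagonal, giving condition (iii). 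Hence $A,B,C$ act as a Leonard triple on $W$.

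The main obstacle is exactly this last implication: the definition of a Leonard triple demands that, whenever one generator is diagonalized, the other two be simultaneously irreducible tridiagonal in a single common basis, whereas Theorem~\ref{thm:lp} guarantees tridiagonality of only one partner at a time. The centrality of $\delta$ is what resolves this, since on the irreducible module $W$ it turns the third generator into $\delta_W I$ minus the sum of the other two, so that a single Leonard-pair basis certifies both off-diagonal generators at once.
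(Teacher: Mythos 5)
Your proof is correct, but the route you take for the equivalence of (iii) and (iv) differs from the paper's. The paper simply cites Proposition~\ref{lem:lt}, whose implication (ii) $\Rightarrow$ (iii) is proved by repeating, for each ordered pair among $A,B,C$, the annihilator argument from Proposition~\ref{lem:lp}: the relations \eqref{AB}--\eqref{CB} of Lemma~\ref{presentation2} show that in the eigenbasis of any one multiplicity-free generator the other two are each tridiagonal, and irreducibility of the module forces the off-diagonal entries to be nonzero. You instead take the Leonard-pair statement (Theorem~\ref{thm:lp}) as a black box and upgrade it to the triple statement using only the centrality of $\delta=A+B+C$: on an irreducible composition factor $W$, Schur's lemma gives $C=\delta_W I-A-B$, so in a Leonard-pair basis where $A$ is diagonal and $B$ is irreducible tridiagonal, $C$ is automatically irreducible tridiagonal as well (its off-diagonal entries are the negatives of those of $B$), and similarly for the other two conditions of Definition~\ref{defn:lt}. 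This is a clean shortcut: it avoids re-running the tridiagonality computation for the four extra relations \eqref{AC}--\eqref{CB}, at the cost of being special to the Racah setting, where the three generators sum to a central element; the paper's argument via Lemma~\ref{presentation2} is the one that transfers to situations (such as the $q$-deformed Askey--Wilson/DAHA case this paper parallels) where no such linear relation is available. The remaining parts of your proof --- deducing (i) $\Leftrightarrow$ (ii) $\Leftrightarrow$ (iii) from three applications of Theorem~\ref{thm:diagonal}, and (iv) $\Rightarrow$ (iii) by observing that a Leonard triple restricts to Leonard pairs --- match the paper's intent, and there is no circularity in invoking Theorem~\ref{thm:lp}, since it is established independently from Theorem~\ref{thm:diagonal} and Proposition~\ref{lem:lp}.
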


The paper is organized as follows. In \S\ref{s:AWmodule} we recall some preliminaries on the finite-dimensional irreducible $\Re$-modules. In \S\ref{s:Rmodule} we give some sufficient and necessary conditions for $A,B,C$ acting as a Leonard triple on finite-dimensional irreducible $\Re$-modules. In \S\ref{Pre:Hmodule} we recall some facts concerning the even-dimensional irreducible $\H$-modules. In \S\ref{s:even} we prove Theorem \ref{thm:diagonal} in the even-dimensional case. In \S\ref{Pre:Hmodule2} we recall some facts concerning the odd-dimensional irreducible $\H$-modules. In \S\ref{s:odd} we prove Theorem \ref{thm:diagonal} in the odd-dimensional case and we end this paper with the proofs for Theorems \ref{thm:lp} and \ref{thm:lt}.

\section{Preliminaries on the finite-dimensional irreducible $\Re$-modules}\label{s:AWmodule}

\begin{prop}
[Proposition 2.4, \cite{SH:2019-1}]
\label{prop:UAWd}
For any scalars $a,b,c\in \F$ and any integer $d\geq 0$, there exists a $(d+1)$-dimensional $\Re$-module $R_d(a,b,c)$  satisfying the following conditions:
\begin{enumerate}
\item There exists a basis for $R_d(a,b,c)$ with respect to which the matrices representing $A$ and $B$ are
$$
\begin{pmatrix}
\theta_0 & & &  &{\bf 0}
\\ 
1 &\theta_1 
\\
&1 &\theta_2
 \\
& &\ddots &\ddots
 \\
{\bf 0} & & &1 &\theta_d
\end{pmatrix},
\qquad 
\begin{pmatrix}
\theta_0^* &\varphi_1 &  & &{\bf 0}
\\ 
 &\theta_1^* &\varphi_2
\\
 &  &\theta_2^* &\ddots
 \\
 & & &\ddots &\varphi_d
 \\
{\bf 0}  & & & &\theta_d^*
\end{pmatrix}
$$
respectively, where 
\begin{align*}
\theta_i 
&= \textstyle
\left(
a+\frac{d}{2}-i
\right)
\left(
a+\frac{d}{2}-i+1
\right)
\qquad 
\hbox{for $i=0,1,\ldots,d$},
\\
\theta^*_i 
&= \textstyle
\left(
b+\frac{d}{2}-i
\right)
\left(
b+\frac{d}{2}-i+1
\right)
\qquad 
\hbox{for $i=0,1,\ldots,d$},
\\
\varphi_i 
&= \textstyle
i(i-d-1)
\left(
a+b+c+\frac{d}{2}-i+2
\right)
\left(
a+b-c+\frac{d}{2}-i+1
\right)
\qquad 
\hbox{for $i=1,2,\ldots,d$}.
\end{align*}

\item The element $\delta$ acts on $R_d(a,b,c)$ as scalar multiplication by
\begin{align*}
\textstyle
\frac{d}{2}
\left(
\frac{d}{2}+1
\right)
+a(a+1)+b(b+1)+c(c+1).
\end{align*}
\end{enumerate} 
\end{prop}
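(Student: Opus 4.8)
The plan is to produce explicit linear operators $A,B,C,D$ on a $(d+1)$-dimensional space and then verify directly that they satisfy every defining relation of $\Re$. Fix a basis $v_0,v_1,\dots,v_d$ and let $A,B$ act by the two displayed matrices, so that
\begin{align*}
Av_i &= \theta_i v_i+v_{i+1}, & Bv_i &= \varphi_i v_{i-1}+\theta_i^* v_i,
\end{align*}
under the conventions $v_{-1}=v_{d+1}=0$ and $\varphi_0=0$. Since the relation $[A,B]=2D$ must hold, I am forced to \emph{define} $D:=\tfrac12(AB-BA)$; a direct computation then shows $D$ is tridiagonal. Likewise, since $\delta=A+B+C$ has to act as the scalar $\delta_0$ named in (ii), I \emph{define} $C:=\delta_0 I-A-B$. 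This completely pins down the putative module structure, and the entire burden of the proof is to check that the relations of $\Re$ are indeed satisfied.

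The first line of relations is cheap. Here $[A,B]=2D$ holds by the definition of $D$, while $A+B+C=\delta_0 I$ is scalar and hence central, so
\begin{align*}
[B,C]&=[B,\delta_0 I-A-B]=[B,-A]=[A,B]=2D,\\
[C,A]&=[\delta_0 I-A-B,A]=[-B,A]=[A,B]=2D.
\end{align*}
What remains, and what is the real content, is that each of the three elements $[A,D]+AC-BA$, $[B,D]+BA-CB$, $[C,D]+CB-AC$ commutes with $A,B,C,D$. Because $C$ and $D$ lie in the subalgebra generated by $A$, $B$ and the identity, it suffices to check that each of the three commutes with $A$ and with $B$; better still, I would try to prove the stronger statement that each acts as a scalar multiple of the identity, which makes centrality automatic and avoids any appeal to irreducibility of $R_d(a,b,c)$ (the module may well be reducible). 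Concretely, I would expand each element as a band matrix in the basis $\{v_i\}$---each is a sum of products of the bidiagonal operators $A,B$ and the tridiagonal $D$, hence supported on a bounded number of diagonals---and read off its entries.

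The main obstacle is precisely this last verification: the off-diagonal entries are polynomial expressions in $i$ assembled from $\theta_{i},\theta_{i\pm1}$, $\theta_i^*,\theta_{i\pm1}^*$ and $\varphi_i,\varphi_{i\pm1}$, and proving that they vanish while the diagonal entry is independent of $i$ is a nontrivial algebraic identity. These cancellations are exactly what forces the Racah shape $x(x+1)$ on $\theta_i,\theta_i^*$ and the factored form of $\varphi_i$; for instance the first differences $\theta_{i-1}-\theta_i$ are linear in $i$ and must telescope against the factors appearing in $\varphi_i$, and the correct value of $\delta_0$ in (ii) is forced by demanding that the $\delta_0$-term arising from $AC=\delta_0 A-A^2-AB$ cancel the remaining off-diagonal piece. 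Once the three elements are shown to be scalars, all defining relations hold and $R_d(a,b,c)$ is an $\Re$-module; part (ii) is then immediate, since $\delta=A+B+C=\delta_0 I$ by construction. A cleaner but less self-contained alternative would be to realize $R_d(a,b,c)$ inside a threefold tensor product of $\U$-modules determined by $a,b,c$, with $A,B,C$ the intermediate Casimir operators and the weight fixed by $d$; there the relations become automatic and only the identification of the relevant weight space remains, but setting up that realization rigorously is itself work, so for a self-contained argument the direct matrix verification is the more reliable route.
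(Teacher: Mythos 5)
Your strategy is the standard one and matches what the cited source \cite{SH:2019-1} does: the paper itself offers no proof here (the proposition is quoted from Proposition~2.4 of \cite{SH:2019-1}), and the construction there is likewise a direct matrix verification. Your reductions are all sound: $D:=\tfrac12[A,B]$ and $C:=\delta_0 I-A-B$ are indeed forced, the relations $[A,B]=[B,C]=[C,A]=2D$ then follow exactly as you say, and it is correct that centrality of $\alpha,\beta,\gamma$ reduces to commutation with $A$ and $B$ alone (and that proving they act as scalars is the cleanest way to get this without invoking irreducibility, which may fail for general $a,b,c$).

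The one reservation is that the decisive step --- showing that each of $[A,D]+AC-BA$, $[B,D]+BA-CB$, $[C,D]+CB-AC$ is a scalar matrix --- is described but not carried out, and this computation is the entire mathematical content of the proposition. You correctly identify where the cancellations must come from (the quadratic shape of $\theta_i,\theta_i^*$ and the factored form of $\varphi_i$), but you neither display the resulting scalars nor exhibit the telescoping identities, so as written the proof is a well-organized plan rather than a verification. To close it, it would be enough to compute the five-diagonal band matrix of, say, $\alpha$ explicitly: its entries involve only $\theta_{i},\theta_{i\pm1}$, $\theta_i^*$, and $\varphi_i,\varphi_{i+1}$, and one finds $\alpha$ acts as the scalar determined by $\theta_0,\theta_d,\theta_0^*,\theta_d^*,\varphi_1,\varphi_d$; the cases of $\beta$ and $\gamma$ then follow by the same computation or by symmetry of the presentation. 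Your alternative via intermediate Casimir elements in a triple tensor product of $U(\mathfrak{sl}_2)$-modules is legitimate but, as you note, would require Verma-type modules to accommodate arbitrary $a,b,c\in\F$, so the direct check remains the right self-contained route.
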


\begin{thm}
[Theorem 4.5, \cite{SH:2019-1}]
\label{thm:irr_UAW} 
For any scalars $a,b,c\in \F$ and any integer $d\geq 0$, the $\Re$-module $R_d(a,b,c)$ is irreducible if and only if $$
a+b+c+1, 
-a+b+c, 
a-b+c, 
a+b-c
\not\in\left\{\frac{d}{2}-i\,\bigg|\, i=1,2,\ldots,d\right\}.
$$
\end{thm}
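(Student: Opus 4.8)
The plan is to analyze the $\Re$-submodules of $R_d(a,b,c)$ directly in the basis of Proposition~\ref{prop:UAWd}. Write $v_0,\dots,v_d$ for that basis, so that
\[
A v_i = \theta_i v_i + v_{i+1},\qquad B v_i = \theta_i^* v_i + \varphi_i v_{i-1},
\]
with the convention $v_{-1}=v_{d+1}=0$. Since $\delta$ is central and acts as a scalar, $C=\delta-A-B$ and $D=\tfrac12[A,B]$, so a subspace is an $\Re$-submodule if and only if it is invariant under both $A$ and $B$. The governing feature is that $A$ raises the index while $B$ lowers it; in particular $v_0$ is a $B$-eigenvector that generates $R_d(a,b,c)$ under $A$, since $A^kv_0=v_k+(\text{lower-index terms})$, and dually $v_d$ is an $A$-eigenvector.

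First I would dispose of two of the four coincidences. The subspace $U_k=\mathrm{span}\{v_k,\dots,v_d\}$ is automatically $A$-invariant, and $BU_k\subseteq U_k$ holds exactly when $\varphi_k=0$; hence every $k\in\{1,\dots,d\}$ with $\varphi_k=0$ yields a proper submodule. Factoring $\varphi_k=k(k-d-1)(a+b+c+\tfrac d2-k+2)(a+b-c+\tfrac d2-k+1)$ and noting that $k$ and $k-d-1$ never vanish for $1\le k\le d$, one checks that some $\varphi_k$ vanishes precisely when $a+b+c+1$ or $a+b-c$ lies in $\{\tfrac d2-i\mid i=1,\dots,d\}$. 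This accounts for two of the four excluded values.

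For the remaining two I would exploit the cyclic symmetry of the defining relations: $A\mapsto B\mapsto C\mapsto A$, $D\mapsto D$ extends to an automorphism $\rho$ of $\Re$, under which $R_d(a,b,c)$ acquires bidiagonal presentations for the pairs $(B,C)$ and $(C,A)$ whose off-diagonal entries are obtained from $\varphi_i$ by cyclically permuting $a,b,c$; their vanishing is governed by $-a+b+c$ and $a-b+c$, respectively. To avoid invoking an external classification of the pulled-back module, the same submodules can be produced inside the original space from the eigenvectors of $A$: the flag $\mathrm{span}\{w_0,\dots,w_{k-1}\}$ spanned by the lowest $A$-eigenvectors is $A$-invariant, and determining when it is also $B$-invariant yields, after the factorization $(a-b)^2-(c+\tfrac12)^2=(a-b-c-\tfrac12)(a-b+c+\tfrac12)$ already visible in the case $d=1$, exactly the conditions on $-a+b+c$ and $a-b+c$. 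Taken together, the three families of flags show that $R_d(a,b,c)$ is reducible whenever any one of the four coincidences occurs.

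The converse — that $R_d(a,b,c)$ is irreducible once all four coincidences are excluded — is the main obstacle and will require the most care. The strategy is to show that when all $\varphi_i\ne0$ and the two dual families of off-diagonal entries are also nonzero, every nonzero submodule $W$ must contain $v_0$: one uses $\varphi_i\ne0$ to lower an arbitrary vector of $W$, via $B$, to one with nonzero $v_0$-component, and then the nondegeneracy forced by the conditions on $-a+b+c$ and $a-b+c$ to isolate $v_0$ itself; since $v_0$ generates under $A$, this forces $W=R_d(a,b,c)$. The delicate point is that the eigenvalues $\theta_i$ (and likewise $\theta_i^*$) need not be distinct — a coincidence $\theta_i=\theta_j$ occurs exactly when $i+j=2a+d+1$ — so $A$ and $B$ may fail to be diagonalizable and the eigenvector flags must be replaced by generalized-eigenspace flags. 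Handling these Jordan degeneracies uniformly, and proving that the flag submodules exhaust all $A$- and $B$-invariant subspaces rather than merely furnishing some, is where the real work lies; I would organize it by decomposing $W$ along the generalized eigenspaces of $A$ and tracking how $B$ connects adjacent eigenvalues.
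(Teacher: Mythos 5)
This theorem is imported into the paper from \cite{SH:2019-1} (Theorem 4.5 there) and is not proved in the present manuscript, so there is no in-paper argument to compare against; I can only assess your attempt on its own terms. The part you actually carry out is correct: with respect to the basis of Proposition~\ref{prop:UAWd}, a subspace is an $\Re$-submodule iff it is $A$- and $B$-invariant (since $\delta$ acts as a scalar), the flags $U_k=\mathrm{span}\{v_k,\dots,v_d\}$ are $A$-invariant, $U_k$ is $B$-invariant iff $\varphi_k=0$, and the vanishing of some $\varphi_k$ is equivalent to $a+b+c+1$ or $a+b-c$ lying in $\{\frac{d}{2}-i\mid i=1,\dots,d\}$. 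That establishes reducibility for two of the four excluded values.

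The rest is a plan with two genuine gaps. First, for the values $-a+b+c$ and $a-b+c$ you never exhibit the submodules. The cyclic-symmetry route requires identifying the pullback of $R_d(a,b,c)$ along $A\mapsto B\mapsto C\mapsto A$ as some $R_d(a',b',c')$, i.e.\ exactly the computation (a second bidiagonal presentation with explicit off-diagonal entries, in the spirit of Lemma~\ref{eigenvalues_C}) that you decline to do; the alternative eigenvector-flag route breaks precisely in the degenerate cases, because $A$ is a lower bidiagonal matrix with nonzero subdiagonal and hence non-derogatory, so when $\theta_i=\theta_j$ (i.e.\ $i+j=2a+d+1$) the eigenvectors do not span and ``the flag of the lowest $k$ eigenvectors'' is not defined. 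You name this obstruction but do not resolve it, and the asserted factorization $(a-b-c-\tfrac12)(a-b+c+\tfrac12)$ is never derived for general $d$. Second, the converse (all four conditions $\Rightarrow$ irreducible) is the substantive half and is left as a strategy: the step where a nonzero submodule is shown to contain $v_0$ itself --- not merely a vector with nonzero $v_0$-component --- is exactly where the hypotheses on $-a+b+c$ and $a-b+c$ must enter, and you give no mechanism for it. Note that nonvanishing of all $\varphi_i$ cannot suffice (otherwise the theorem would need only two conditions), so some further structural input --- a second split basis, or a classification of the $A$-invariant subspaces including the Jordan-degenerate cases --- is unavoidable and is missing. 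As it stands the attempt proves only that two of the four coincidences force reducibility.
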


\begin{thm}
[Theorem 6.3, \cite{SH:2019-1}]
\label{thm:onto_UAW}
Let $d\geq 0$ denote an integer. Suppose that $V$ is a $(d+1)$-dimensional irreducible $\Re$-module. Then there exist $a,b,c\in \F$ such that the $\Re$-module $R_d(a,b,c)$ is isomorphic to $V$.
\end{thm}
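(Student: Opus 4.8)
The goal is to realize an arbitrary $(d+1)$-dimensional irreducible $\Re$-module $V$ as one of the modules $R_d(a,b,c)$ of Proposition \ref{prop:UAWd}; concretely I would produce a basis of $V$ on which $A$ and $B$ are the bidiagonal matrices displayed there and then solve for $a,b,c$. Since $\F$ is algebraically closed and $V$ is irreducible, Schur's lemma shows that $\delta$ and each of the three central elements $[A,D]+AC-BA$, $[B,D]+BA-CB$, $[C,D]+CB-AC$ act on $V$ as scalars; write $\delta=\Delta$ and let $\alpha$ denote the first Casimir scalar. Substituting $C=\Delta-A-B$ and $AB-BA=2D$ into the first central relation turns it into the operator identity $AD-DA=\alpha-\Delta A+A^2+2BA+2D$ on $V$, which expresses $[A,D]$ through $A,B,D$ and the scalars; the cyclic relations give the analogous identities for $[B,D]$ and $[C,D]$.

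The key step is a weight-shift computation. If $Av=\theta v$, then the identities above show that $\mathrm{span}\{v,Bv,Dv\}$ is $A$-invariant and that the eigenvalues of $A$ on it lie in $\{\theta,\lambda_+,\lambda_-\}$; writing $\theta=x(x+1)$ one finds $\lambda_+=(x+1)(x+2)$ and $\lambda_-=(x-1)x$. Hence, after passing to generalized eigenspaces, $B$ and $D$ shift the parameter $x$ by $0$ or $\pm1$, so the $\Re$-submodule generated by any single $A$-eigenvector involves only eigenvalues $\{x'(x'+1)\}$ with $x'$ in an arithmetic progression. Finite-dimensionality and irreducibility then force the spectrum of $A$ to be exactly one such string $\theta_i=(x_0-i)(x_0-i+1)$ for $0\le i\le d$; setting $a:=x_0-\tfrac d2$ makes this agree with Proposition \ref{prop:UAWd}.

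To construct the basis I would run the symmetric analysis for $B$ (the relations being cyclic in $A,B,C$), obtaining its spectrum and, in particular, an extreme eigenvalue which by irreducibility occurs with multiplicity one, spanned by a vector $v_0$; I then set $v_{i+1}:=(A-\theta_i)v_i$. By construction $Av_i=\theta_i v_i+v_{i+1}$, so $A$ is automatically lower bidiagonal with unit subdiagonal, and the weight-shift property forces the flag $V_i=\mathrm{span}\{v_0,\dots,v_i\}=\F[A]_{\le i}v_0$ to be $B$-stable with $Bv_i\in\F v_{i-1}+\F v_i$; linear independence of the $v_i$ together with irreducibility gives $\dim V=d+1$ and that they form a basis. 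It remains to identify the diagonal and superdiagonal entries of $B$: commuting $B$ past the powers of $A$ down to $v_0$, where $Bv_0$ is a known multiple of $v_0$ and $Dv_0$ is computable from the relations, yields a closed recursion whose solution I would match to $\theta^*_i$ and $\varphi_i$, thereby reading off $b$ and $c$.

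The step I expect to be hardest is precisely this last matching: verifying that the superdiagonal entries produced by the recursion equal $i(i-d-1)(a+b+c+\tfrac d2-i+2)(a+b-c+\tfrac d2-i+1)$, since this is what pins down the dependence on $c$ and requires careful bookkeeping of how the central scalars enter $Dv_0$ and propagate through the recursion. A secondary difficulty is controlling the degenerate cases of the weight-shift step, where a candidate raising or lowering vector vanishes, so as to be certain that the spectrum of $A$ is an unbroken string of length exactly $d+1$ rather than a shorter or disconnected set. Once the entries are matched, the linear map sending the constructed basis to the standard basis of $R_d(a,b,c)$ is an $\Re$-isomorphism, and Theorem \ref{thm:irr_UAW} confirms that the parameters $a,b,c$ indeed fall in the irreducible range, completing the proof.
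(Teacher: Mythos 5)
First, a point of comparison: the paper does not prove this theorem at all --- it is imported verbatim as Theorem 6.3 of \cite{SH:2019-1} --- so there is no in-paper argument to measure your attempt against. Judged on its own terms, your strategy is the standard one for such classifications and is workable in outline: Schur scalars for $\delta$ and the central elements, the three-term raising/lowering consequences of the defining relations (your computation that $Av=x(x+1)v$ forces $(A-\lambda_+)(A-x(x+1))(A-\lambda_-)Bv=0$ with $\lambda_{\pm}=(x\pm1)(x\pm1+1)$ is correct, and is essentially Lemma \ref{presentation2} applied to an eigenvector), and a split basis $v_{i+1}=(A-\theta_i)v_i$ seeded by an eigenvector of $B$ for an extreme eigenvalue.

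That said, the two steps you assert rather than prove are exactly where the content of the theorem lies. First, ``finite-dimensionality and irreducibility force the spectrum of $A$ to be one unbroken string $\theta_i=(x_0-i)(x_0-i+1)$, $0\le i\le d$'' does not follow from the weight-shift observation alone: the map $x\mapsto x(x+1)$ is two-to-one ($x$ and $-x-1$ give the same eigenvalue), so the parameter of a raised or lowered eigenvalue is determined only up to this sign flip at each step and your ``arithmetic progression'' is not yet well defined; moreover $A$ need not be diagonalizable and distinct indices can yield equal $\theta_i$ (Lemma \ref{lem:dia_UAW}), so one cannot count distinct eigenvalues against $\dim V$ to conclude the string has length exactly $d+1$. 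The correct assertion concerns the characteristic polynomial and is a \emph{consequence} of the basis construction, not an input to it: one must first exhibit an eigenvalue $x_0(x_0+1)$ of $A$ for which $(x_0+1)(x_0+2)$ is not an eigenvalue, and then prove $v_i\neq0$ for $i\le d$ and $v_{d+1}=0$. Second, seeding with a $B$-eigenvector $v_0$ and applying $A$-shifts only gives $Bv_i\in V_i$ for free; the claim $Bv_i\in\F v_{i-1}+\F v_i$, the compatibility of the orientation of the $A$-string with the chosen end of the $B$-string, and the closed form of the resulting $\varphi_i$ (which is what actually produces the parameter $c$) constitute the real induction, and you explicitly leave all of it open. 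So the proposal is a credible plan in the right direction, but as written it is incomplete at its decisive points rather than a proof.
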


\begin{lem}
[Theorem 6.6, \cite{SH:2019-1}]
\label{lem:dia_UAW}
Let $d\geq 0$ denote an integer and let $a,b,c$ denote scalars in $\F$. Suppose that the $\Re$-module $R_d(a,b,c)$ is irreducible. Then the following are equivalent:
\begin{enumerate}
\item $A$ {\rm (}resp. $B${\rm )}  {\rm (}resp. $C${\rm )} is diagonalizable on $R_d(a,b,c)$.

\item $A$ {\rm (}resp. $B${\rm )}  {\rm (}resp. $C${\rm )} is multiplicity-free on $R_d(a,b,c)$.

\item $2a$  {\rm (}resp. $2b${\rm )}  {\rm (}resp. $2c${\rm )} is not in $\{i-d-1\,|\, i=1,2,\ldots,2d-1\}$.
\end{enumerate}
\end{lem}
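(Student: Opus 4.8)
My plan is to reduce the three ``resp.'' assertions to the single case of $A$ by exploiting the cyclic symmetry of $\Re$, and then to settle the $A$-case directly from the matrix in Proposition \ref{prop:UAWd}. First I would observe that the assignment $A\mapsto B$, $B\mapsto C$, $C\mapsto A$, $D\mapsto D$ respects the defining relations \eqref{central} and cyclically permutes the three central elements, hence extends to an algebra automorphism $\sigma$ of $\Re$ fixing $\delta$. Twisting the module $R_d(a,b,c)$ by $\sigma$ yields an irreducible $(d+1)$-dimensional module on which $A,B,C$ act as $B,C,A$ do on $R_d(a,b,c)$; since $\delta$ is $\sigma$-invariant and the irreducibility criterion of Theorem \ref{thm:irr_UAW} is symmetric under cyclic permutation of $(a,b,c)$, Theorem \ref{thm:onto_UAW} identifies this twist with $R_d(b,c,a)$. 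Consequently the action of $B$ (resp.\ $C$) on $R_d(a,b,c)$ is similar to that of $A$ on $R_d(b,c,a)$ (resp.\ $R_d(c,a,b)$), so it suffices to treat $A$ and then substitute $(a,b,c)$ by its cyclic shifts.

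For the $A$-case I would argue as follows. In the basis of Proposition \ref{prop:UAWd} the operator $A$ is represented by a lower bidiagonal matrix whose subdiagonal entries are all equal to $1$. Reading off $Av_j=\theta_j v_j+v_{j+1}$ (with $v_{d+1}=0$) shows that $v_0$ is a cyclic vector, so $A$ is non-derogatory: its minimal polynomial equals its characteristic polynomial $\prod_{i=0}^d(x-\theta_i)$. Hence $A$ is diagonalizable precisely when this polynomial is separable, that is, exactly when the $\theta_i$ are pairwise distinct, which is the same as $A$ being multiplicity-free. This already gives (i)$\Leftrightarrow$(ii).

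It remains to compute when the $\theta_i$ collide, giving (ii)$\Leftrightarrow$(iii). Writing $\theta_i=u_i(u_i+1)$ with $u_i=a+\frac d2-i$, for $i\neq j$ the equality $\theta_i=\theta_j$ is equivalent to $u_i=u_j$ or $u_i+u_j=-1$; the former cannot occur, while the latter reads $2a=(i+j)-d-1$. As the unordered pair $\{i,j\}$ ranges over distinct indices in $\{0,1,\dots,d\}$ the sum $i+j$ takes precisely the values $1,2,\dots,2d-1$, so some collision occurs if and only if $2a\in\{k-d-1\mid k=1,2,\dots,2d-1\}$. The negation of this is exactly condition (iii), completing the chain (i)$\Leftrightarrow$(ii)$\Leftrightarrow$(iii) for $A$, and hence for $B$ and $C$ after the cyclic substitution.

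I expect the symmetry reduction to be the delicate point: one must justify that the $\sigma$-twist of $R_d(a,b,c)$ is again of the form $R_d(\cdot,\cdot,\cdot)$ and pin down its parameters, which rests on the cyclic invariance of the irreducibility condition together with the classification in Theorem \ref{thm:onto_UAW}. The non-derogatory step is robust and needs nothing beyond the unit subdiagonal of $A$. If one preferred to avoid symmetry and treat $B$ via its own matrix, the extra obstacle would be checking that irreducibility forces every superdiagonal entry $\varphi_i$ to be nonzero; this does hold, since the two parameter-linear factors of $\varphi_i$ vanish exactly on the set excluded by Theorem \ref{thm:irr_UAW}, but the symmetry route avoids this verification and disposes of $C$, for which no matrix is available, in the same stroke.
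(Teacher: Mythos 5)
First, note that the paper itself supplies no proof of this lemma: it is quoted as Theorem~6.6 of \cite{SH:2019-1}, so there is no in-paper argument to compare yours against. Judged on its own merits, your proof is essentially sound, and the $A$-case is complete: the unit subdiagonal makes $v_0$ a cyclic vector, so $A$ is non-derogatory and (i)$\Leftrightarrow$(ii) reduces to separability of $\prod_{i=0}^d(x-\theta_i)$; and the factorization $\theta_i-\theta_j=(u_i-u_j)(u_i+u_j+1)$ with $u_i=a+\frac d2-i$ correctly converts eigenvalue collisions into $2a=i+j-d-1$ with $i+j$ running over $1,\dots,2d-1$. (This part uses no irreducibility at all.)

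The one step that is not justified as written is the identification of the $\sigma$-twist of $R_d(a,b,c)$ with $R_d(b,c,a)$ specifically. Theorem \ref{thm:onto_UAW} only asserts that the twist is isomorphic to \emph{some} $R_d(a',b',c')$; it carries no uniqueness statement, and the scalar by which $\delta$ acts is symmetric in $a,b,c$ and invariant under each substitution $x\mapsto -x-1$, so neither the $\sigma$-invariance of $\delta$ nor the cyclic symmetry of the irreducibility criterion can pin down the parameters. Fortunately your argument only needs ``$2a'\notin\{i-d-1\}$'' to be equivalent to ``$2b\notin\{i-d-1\}$'', and this is easy to supply: the trace of $A$ on $R_d(a',b',c')$ equals $(d+1)a'(a'+1)$ plus a constant depending only on $d$, while the trace of $A$ on the twist equals the trace of $B$ on $R_d(a,b,c)$, namely $(d+1)b(b+1)$ plus the same constant; hence $a'\in\{b,-b-1\}$, and since the set $\{i-d-1\mid i=1,\dots,2d-1\}$ is stable under $x\mapsto -x-2$, either value yields condition (iii). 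Alternatively, (ii)$\Leftrightarrow$(iii) for $B$ can be read directly off the upper bidiagonal matrix of $B$ in Proposition \ref{prop:UAWd}, whose characteristic polynomial is $\prod_{i=0}^d(x-\theta_i^*)$ with no hypothesis on the $\varphi_i$, and the basis of Lemma \ref{eigenvalues_C} plays the same role for $C$. With this patch the proof is complete; your closing remark that the nonvanishing of the $\varphi_i$ follows from Theorem \ref{thm:irr_UAW} is also correct.
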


\section{The conditions for $A,B,C$ as a Leonard triple on finite-dimensional irreducible $\Re$-modules}\label{s:Rmodule}

Define the following elements of $\Re$:
\begin{align}
\alpha 
&= [A,D]+AC-BA, \label{alpha}
\\
\beta
&= [B,D]+BA-CB, \label{beta}
\\
\gamma 
&= [C,D]+CB-AC. \label{gamma}
\end{align}

\begin{lem}
\label{presentation2}
The following relations hold in $\Re$:
\begin{align}
A^2B-2ABA+BA^2-2AB-2BA=2A^2-2A\delta+2\alpha,\label{AB}
\\
AB^2-2BAB+B^2A-2AB-2BA=2B^2-2B\delta-2\beta,\label{BA}
\\
A^2C-2ACA+CA^2-2AC-2CA=2A^2-2A\delta-2\alpha,\label{AC}
\\
C^2A-2CAC+AC^2-2AC-2CA=2C^2-2C\delta+2\gamma,\label{CA}
\\
B^2C-2BCB+CB^2-2BC-2CB=2B^2-2B\delta+2\beta,\label{BC}
\\
C^2B-2CBC+BC^2-2BC-2CB=2C^2-2C\delta-2\gamma.\label{CB}
\end{align}
\end{lem}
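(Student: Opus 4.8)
The plan is to recognize the cubic part of each left-hand side as a nested commutator and then reduce it to one of the central elements $\alpha,\beta,\gamma$ defined in (\ref{alpha})--(\ref{gamma}). I will carry out (\ref{AB}) in detail; the remaining five identities follow from the same recipe together with the cyclic symmetry of the defining relations.

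First I would use the elementary identity
\[
X^2Y-2XYX+YX^2=[X,[X,Y]],
\]
valid in any associative algebra, to rewrite the degree-three terms. Applied to (\ref{AB}) it gives $A^2B-2ABA+BA^2=[A,[A,B]]$. By (\ref{central}) we have $[A,B]=2D$, so this equals $2[A,D]$. Now the definition (\ref{alpha}) of $\alpha$ yields $[A,D]=\alpha-AC+BA$, whence
\[
A^2B-2ABA+BA^2=2\alpha-2AC+2BA.
\]
Substituting into the left-hand side of (\ref{AB}) and cancelling the two occurrences of $2BA$ leaves $2\alpha-2AC-2AB$.

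The last step is to match the quadratic remainder using the centrality of $\delta=A+B+C$. Multiplying $\delta$ on the left by $A$ gives $A\delta=A^2+AB+AC$, so that $-2AC-2AB=2A^2-2A\delta$; hence the expression above equals $2A^2-2A\delta+2\alpha$, which is the right-hand side of (\ref{AB}). The other five identities are obtained in exactly the same way: (\ref{AC}), (\ref{CA}), (\ref{BC}), (\ref{CB}) come from the double commutators $[A,[A,C]]$, $[C,[C,A]]$, $[B,[B,C]]$, $[C,[C,B]]$, and (\ref{BA}) from $[B,[B,A]]$, each time replacing the inner commutator by $\pm2D$ and using the appropriate definition among (\ref{alpha})--(\ref{gamma}).

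I do not expect a genuine obstacle here, since the entire content is the double-commutator identity above combined with $[X,Y]=\pm2D$ and the definitions of $\alpha,\beta,\gamma$. The only point requiring care is bookkeeping: keeping the signs straight (for instance $[B,A]=-2D$ is what produces $-2\beta$ on the right of (\ref{BA}), whereas $[B,C]=2D$ produces $+2\beta$ on the right of (\ref{BC})), and expanding $\delta$ on the side that matches the quadratic remainder---on the left in (\ref{AB}), giving $A\delta=A^2+AB+AC$, but on the right in (\ref{BA}), giving $\delta B=AB+B^2+CB$.
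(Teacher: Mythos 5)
Your proof is correct and is essentially the paper's own argument read in the opposite direction: the paper eliminates $D=\tfrac12[A,B]$ and $C=\delta-A-B$ from the definition of $\alpha$ (and cyclic variants), which produces exactly your double-commutator term $\tfrac12[A,[A,B]]=\tfrac12(A^2B-2ABA+BA^2)$ and your $A\delta$ substitution. The sign bookkeeping you flag ($[B,A]=-2D$ giving $-2\beta$, etc.) all checks out.
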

\begin{proof}
From (\ref{central}) and (\ref{UAW_central}) we see that $D=\frac{1}{2}[A,B]$ and $C=\delta-A-B$. The relations (\ref{AB}) and (\ref{BA}) can be obtained by using these two facts to eliminate $C,D$ from (\ref{alpha}) and (\ref{beta}) respectively. The relations (\ref{AC}) and (\ref{CA}) can be obtained by substituting $B=\delta-A-C$ and $D=\frac{1}{2}[C,A]$ into (\ref{alpha}) and (\ref{gamma}) respectively. The relations (\ref{BC}) and (\ref{CB}) can be obtained by substituting $A=\delta-B-C$ and $D=\frac{1}{2}[B,C]$ into (\ref{beta}) and (\ref{gamma}) respectively.
\end{proof}

Recall that the Pochhammer symbol is defined by
\begin{align}
(x)_n= \prod_{i=1}^{n}(x+i-1) \label{Poch}
\end{align}
for all $x\in\F$ and all integers $n\geq0$. Note that (\ref{Poch}) is taken to be $1$ when $n=0$.

\begin{lem}
\label{eigenvalues_C}
Let $d\geq0$ denote an integer and let $a,b,c\in\F$. Then there exists a basis for $R_d(a,b,c)$ with respect to which the matrix representing $C$ is
$$
\begin{pmatrix}
\theta^{\e}_0 & & &  &{\bf 0}
\\ 
1 &\theta^{\e}_1 
\\
&1 &\theta^{\e}_2
 \\
& &\ddots &\ddots
 \\
{\bf 0} & & &1 &\theta^{\e}_d
\end{pmatrix},
$$
where $\theta^{\e}_i = \textstyle
(c+\frac{d}{2}-i)
(c+\frac{d}{2}-i+1)
$ for $i=0,1,\ldots,d$.
\end{lem}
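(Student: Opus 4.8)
The plan is to write $C$ explicitly in the basis of Proposition~\ref{prop:UAWd} and then straighten it out by a triangular change of basis. Since $\delta=A+B+C$ by \eqref{UAW_central} and $\delta$ acts on $R_d(a,b,c)$ as a scalar, call it $\omega$, we have $C=\omega I-A-B$. In the given basis $v_0,\dots,v_d$ the operator $A$ is lower bidiagonal with unit subdiagonal and $B$ is upper bidiagonal, so $C$ is tridiagonal with every subdiagonal entry equal to $-1$; concretely $Cv_i=(\omega-\theta_i-\theta_i^*)v_i-v_{i+1}-\varphi_i v_{i-1}$ with the convention $v_{-1}=v_{d+1}=0$. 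In particular the subdiagonal entries of $C$ are all nonzero, so $C$ is an unreduced tridiagonal (upper Hessenberg) matrix. This immediately gives that $v_0$ is a cyclic vector for $C$: the vector $Cv_0$ has a nonzero $v_1$-component, $C^2v_0$ has a nonzero $v_2$-component, and so on, so that $v_0,Cv_0,\dots,C^dv_0$ form a basis.

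Next I would produce the desired basis as a chain. Put $w_0=v_0$ and $w_{i+1}=(C-\theta_i^{\e})w_i$ for $0\le i\le d-1$, so that $w_i=\bigl(\prod_{j=0}^{i-1}(C-\theta_j^{\e})\bigr)v_0$ is a monic degree-$i$ polynomial in $C$ applied to the cyclic vector $v_0$. Because $v_0$ is cyclic these $w_0,\dots,w_d$ are linearly independent and hence a basis, and by construction $Cw_i=\theta_i^{\e}w_i+w_{i+1}$ for $i<d$. In this basis $C$ is therefore represented exactly by the claimed lower bidiagonal matrix, provided that the final relation $Cw_d=\theta_d^{\e}w_d$ also holds, i.e.\ provided $w_{d+1}=\bigl(\prod_{i=0}^{d}(C-\theta_i^{\e})\bigr)v_0=0$. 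Thus the whole statement reduces to one spectral fact: the characteristic polynomial of $C$ on $R_d(a,b,c)$ equals $\prod_{i=0}^{d}(\lambda-\theta_i^{\e})$. Indeed, granting this, Cayley--Hamilton gives $\prod_{i=0}^{d}(C-\theta_i^{\e})=0$ as an operator, whence $w_{d+1}=0$.

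The main obstacle is precisely this characteristic-polynomial identity, whose striking feature is that although the entries of $C$ involve $a$ and $b$ (through $\theta_i,\theta_i^*,\varphi_i$ and $\omega$), the answer depends only on $c$. I see two routes. The direct route evaluates $\det(\lambda I-C)$ through the standard three-term recurrence $p_k=(\lambda-(\omega-\theta_{k-1}-\theta_{k-1}^*))\,p_{k-1}-\varphi_{k-1}\,p_{k-2}$ for the leading principal minors and verifies by induction that $p_{d+1}(\lambda)=\prod_{i=0}^{d}(\lambda-\theta_i^{\e})$; the cancellation of the $a,b$-dependence is the computational heart and is best organized using the explicit factorizations of $\theta_i,\theta_i^*,\varphi_i$ and of $\omega$. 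The conceptual route explains that $a,b$-independence through the cyclic symmetry of the defining relations: the assignment $A\mapsto B\mapsto C\mapsto A$, $D\mapsto D$ preserves \eqref{central} and permutes $\alpha,\beta,\gamma$ cyclically, hence extends to an automorphism $\sigma$ of $\Re$ fixing $\delta$; twisting $R_d(c,a,b)$ by $\sigma$ yields a module on which $C$ acts by the lower bidiagonal matrix of $A$ on $R_d(c,a,b)$, whose eigenvalues are exactly $\theta_i^{\e}$, and comparing with $R_d(a,b,c)$ on the Zariski-dense set where Theorem~\ref{thm:irr_UAW} gives irreducibility (then extending the resulting polynomial identity in $a,b,c$ by density) produces the claim. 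I would carry out the direct recurrence for a self-contained argument, since the conceptual route additionally requires identifying the two irreducible modules, i.e.\ a uniqueness statement for the parameters underlying Theorem~\ref{thm:onto_UAW}.
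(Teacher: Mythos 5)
Your setup is correct and in fact reproduces the paper's basis: with $w_0=v_0$ and $w_{i+1}=(C-\theta^\e_i)w_i$, the vectors you construct are exactly the vectors used in the paper's proof, and your observations that $C=\delta-A-B$ acts tridiagonally on the basis of Proposition~\ref{prop:UAWd}(i) with all subdiagonal entries equal to $-1$, that $v_0$ is therefore cyclic for $C$, and that the lemma is thereby equivalent to the single identity $\prod_{i=0}^{d}(C-\theta^\e_i)v_0=0$ (equivalently, $\det(\lambda I-C)=\prod_{i=0}^{d}(\lambda-\theta^\e_i)$) are all sound. The gap is that this identity --- which is the entire content of the lemma, the rest being routine linear algebra --- is never proved. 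Moreover, your ``direct route'' is not as routine as stated: an induction on the leading principal minors $p_k=(\lambda-c_{k-1})p_{k-1}-\varphi_{k-1}p_{k-2}$ cannot be closed with the hypothesis $p_k=\prod_{i<k}(\lambda-\theta^\e_i)$, because the intermediate minors do not factor into linear factors in $\lambda$ (they are Racah-type polynomials in disguise); one would need a closed form for every $p_k$, which is essentially the same nontrivial identity in another guise. Your ``conceptual route'' requires an isomorphism of $R_d(a,b,c)$ with a twist of $R_d(c,a,b)$ that is not available from the results quoted in this paper, as you yourself note.

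The paper resolves exactly this point by supplying the closed form $w_i=(-1)^i\sum_{h=0}^{i}\binom{i}{h}(d-i+1)_h\,(a+b+c+\tfrac d2-i+2)_h\,v_{i-h}$ and verifying the two-term relation $(C-\theta^\e_i)w_i=w_{i+1}$ for $i<d$ and $(C-\theta^\e_d)w_d=0$ directly; this is a local computation with explicit vectors, and the terminal case $i=d$ is precisely your missing identity. To complete your argument you must either produce such a closed form (for the $w_i$, or for the minors $p_k$) and verify the recursion, or otherwise exhibit the $d+1$ eigenvalues of $C$ with multiplicity; until then the characteristic-polynomial claim, and with it the lemma, remains unestablished.
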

\begin{proof}
Let $\{v_i\}_{i=0}^d$ denote the basis for $R_d(a,b,c)$ from Proposition \ref{prop:UAWd}(i). Let
$$
w_i=(-1)^i\sum_{h=0}^{i}\binom{i}{h}(d-i+1)_{h}\textstyle(a+b+c+\frac{d}{2}-i+2)_{h}v_{i-h}
$$
for all $i=0,1, \ldots,d$.
Note that $w_i$ is a linear combination of $v_0,v_1, \ldots,v_i$ and the coefficient of $v_i$ in $w_i$ is nonzero for all $i=0,1,\ldots,d$. Hence $\{w_i\}_{i=0}^d$ is a basis for $R_d(a,b,c)$. A routine but tedious calculation yields that
\begin{align*}
(C-\theta^{\e}_i)w_i
&=
\left\{
\begin{array}{ll}
w_{i+1}
\qquad
&
\hbox{for $i=0,1,\ldots,d-1$},
\\
0
\qquad
&
\hbox{for $i=d$}.
\end{array}
\right.
\end{align*}
The lemma follows.
\end{proof}

The finite-dimensional irreducible $\Re$-modules are related to Leonard pairs and Leonard triples in the following ways:

\begin{prop}
\label{lem:lp}
Let $d\geq 0$ denote an integer and let $a,b,c\in \F$. Suppose that the $\Re$-module $R_d(a,b,c)$ is irreducible. Then the following are equivalent:
\begin{enumerate}
\item $A,B$ {\rm (}resp. $B,C${\rm )} {\rm (}resp. $C,A${\rm )} are diagonalizable on $R_d(a,b,c)$.

\item $A,B$ {\rm (}resp. $B,C${\rm )} {\rm (}resp. $C,A${\rm )} are multiplicity-free on $R_d(a,b,c)$.

\item $A,B$ {\rm (}resp. $B,C${\rm )} {\rm (}resp. $C,A${\rm )} act as a Leonard pair on $R_d(a,b,c)$.
\end{enumerate}
\end{prop}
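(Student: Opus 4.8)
The plan is to establish the cyclic chain (i)$\Rightarrow$(ii)$\Rightarrow$(iii)$\Rightarrow$(i), which proves all three conditions equivalent, and to reduce the three cases $(A,B)$, $(B,C)$, $(C,A)$ to a single uniform computation. The implication (i)$\Rightarrow$(ii) is one direction of Lemma \ref{lem:dia_UAW} applied to each of the two operators separately. The implication (iii)$\Rightarrow$(i) is immediate, since Definition \ref{defn:lp}(i),(ii) exhibit bases with respect to which $A$ and $B$ are represented by diagonal matrices, so both are diagonalizable. Hence all the work is in (ii)$\Rightarrow$(iii), and it suffices to carry it out for one pair, say $(A,B)$: the pairs $(B,C)$ and $(C,A)$ will follow from the same computation, reading the eigenvalues off Proposition \ref{prop:UAWd}(i) and Lemma \ref{eigenvalues_C} and using the matching relations of Lemma \ref{presentation2}.

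So assume (ii) for $(A,B)$. Then $A$ and $B$ are diagonalizable with mutually distinct eigenvalues, which by Proposition \ref{prop:UAWd}(i) are $\theta_i=(a+\frac{d}{2}-i)(a+\frac{d}{2}-i+1)$ and $\theta_i^*=(b+\frac{d}{2}-i)(b+\frac{d}{2}-i+1)$ for $i=0,1,\dots,d$. Since $V=R_d(a,b,c)$ is finite-dimensional and irreducible over the algebraically closed field $\F$, the central elements $\delta$, $\alpha$, $\beta$ act as scalars on $V$, so the right-hand sides of relations (\ref{AB}) and (\ref{BA}) are a polynomial in $A$ of degree at most $2$ and a polynomial in $B$ of degree at most $2$, respectively. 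Let $E_i$ denote the projection onto the (one-dimensional) $\theta_i$-eigenspace of $A$. Sandwiching (\ref{AB}) between $E_i$ and $E_j$ with $i\ne j$ kills the right-hand side and reduces the left-hand side to a scalar multiple of $E_iBE_j$, giving
\[
\bigl[(\theta_i-\theta_j)^2-2(\theta_i+\theta_j)\bigr]\,E_iBE_j=0 .
\]

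The decisive point is that, for $i\ne j$, the scalar factor above vanishes only when $|i-j|=1$. Writing $s_i=a+\frac{d}{2}-i$, so that $\theta_i=s_i(s_i+1)$ and $s_i-s_j=j-i$, a short manipulation factors this scalar as $(m^2-1)\bigl((s_i+s_j+1)^2-1\bigr)$ with $m=i-j$; thus it vanishes exactly when $|i-j|=1$ or $s_i+s_j\in\{0,-2\}$. The exceptional alternatives $s_i+s_j\in\{0,-2\}$ amount to $2a\in\{i+j-d,\,i+j-d-2\}$, and the multiplicity-free hypothesis for $A$—which by Lemma \ref{lem:dia_UAW} is exactly $2a\notin\{k-d-1\mid k=1,\dots,2d-1\}$—forces $\{i,j\}$ to be $\{d-1,d\}$ or $\{0,1\}$, i.e. $|i-j|=1$, in those cases. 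Hence $E_iBE_j=0$ whenever $|i-j|\ge 2$, which says precisely that $B$ is tridiagonal in an eigenbasis of $A$. Running the same computation on relation (\ref{BA}) with the eigenprojections of $B$ and the parameter $b$ in place of $a$ shows that $A$ is tridiagonal in an eigenbasis of $B$.

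It remains to upgrade tridiagonal to irreducible tridiagonal and to assemble the Leonard pair. Because $C=\delta-A-B$ and $D=\frac{1}{2}[A,B]$, and the central elements act as scalars on $V$, the subalgebra generated by $A$ and $B$ acts irreducibly on $V$. If a subdiagonal (resp. superdiagonal) entry of the matrix representing $B$ in the $A$-eigenbasis were zero, then a prefix (resp. suffix) span of the ordered eigenbasis would be invariant under both $A$ and $B$, contradicting this irreducibility; so the matrix is irreducible tridiagonal, and symmetrically for $A$ in the $B$-eigenbasis. These two eigenbases verify Definition \ref{defn:lp}(i),(ii), so $(A,B)$ is a Leonard pair. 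The cases $(B,C)$ and $(C,A)$ follow verbatim using the relation pairs $\{(\ref{BC}),(\ref{CB})\}$ and $\{(\ref{CA}),(\ref{AC})\}$, the eigenvalues $\theta_i^\e=(c+\frac{d}{2}-i)(c+\frac{d}{2}-i+1)$ from Lemma \ref{eigenvalues_C}, and the multiplicity-free conditions on $2b$ and $2c$. The one genuinely delicate step is the eigenvalue bookkeeping of the third paragraph: confirming that under multiplicity-freeness the quadratic eigenvalues admit no long-range coincidence $(\theta_i-\theta_j)^2=2(\theta_i+\theta_j)$ with $|i-j|\ge 2$, so that a single defining relation already forces the nearest-neighbour (tridiagonal) shape.
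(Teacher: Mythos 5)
Your proposal is correct and follows essentially the same route as the paper: both reduce everything to (ii)$\Rightarrow$(iii), use the relations of Lemma \ref{presentation2} to show each operator is tridiagonal in an eigenbasis of the other, and then invoke irreducibility of the $\Re$-module to upgrade tridiagonal to irreducible tridiagonal. The only difference is cosmetic: where you sandwich (\ref{AB}) between eigenprojections and factor the resulting scalar as $(m^2-1)\bigl((s_i+s_j+1)^2-1\bigr)$, the paper multiplies by $(A-\theta_i)$ to get the cubic annihilator $(A-\theta_{i-1})(A-\theta_i)(A-\theta_{i+1})Bv_i=0$; your version has the minor virtue of making explicit the check that multiplicity-freeness rules out long-range eigenvalue coincidences.
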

\begin{proof}
(i) $\Leftrightarrow$ (ii): Immediate from Lemma \ref{lem:dia_UAW}.

(iii) $\Rightarrow$ (i): Immediate from Definition \ref{defn:lp}.

(ii) $\Rightarrow$ (iii): 
Without loss of generality we assume that $A,B$ are multiplicity-free on $R_d(a,b,c)$ and we show that $A,B$ act as a Leonard pair on $R_d(a,b,c)$. 

Let
$\theta_i
=\textstyle
\left(
a+\frac{d}{2}-i
\right)
\left(
a+\frac{d}{2}-i+1
\right)$ for every integer $i$. 
By Proposition \ref{prop:UAWd} the scalars $\{\theta_i\}_{i=0}^d$ are the eigenvalues of $A$ in $R_d(a,b,c)$. By Lemma \ref{lem:dia_UAW} the eigenvalues $\{\theta_i\}_{i=0}^d$ of $A$ are mutually distinct.
Let $v_i$ denote the $\theta_i$-eigenvector of $A$ in $R_d(a,b,c)$ for $i = 0,1,\ldots,d$. 
Note that $\{v_i\}_{i=0}^d$ form a basis for $R_d(a,b,c)$. Given any element $X \in \Re$ let $[X]$ denote the matrix representing $X$ with respect to $\{v_i\}_{i=0}^d$. By construction the matrix $[A]$ is
$$
\begin{pmatrix}
\theta_0 & & &  &{\bf 0}
\\ 
&\theta_1 
\\
& &\theta_2
 \\
& & &\ddots
 \\
{\bf 0} & & & &\theta_d
\end{pmatrix}.
$$
Let $i$ be an integer with $0\leq i \leq d$. Applying $v_i$ to either side of (\ref{AB}) yields $(A-\theta_{i-1})(A-\theta_{i+1})Bv_i$ is a scalar multiple of $v_i$. Hence
$$
(A-\theta_{i-1})(A-\theta_{i})(A-\theta_{i+1})Bv_i=0.
$$
It follows that $[B]$ is a tridiagonal matrix. Since the $\Re$-module $R_d(a,b,c)$ is irreducible, the tridiagonal matrix $[B]$ is irreducible.

Let 
$\theta^*_i 
= \textstyle
\left(
b+\frac{d}{2}-i
\right)
\left(
b+\frac{d}{2}-i+1
\right)$
for every integer $i$. 
By Proposition \ref{prop:UAWd} the scalars $\{\theta_i^*\}_{i=0}^d$ are the eigenvalues of $B$ in $R_d(a,b,c)$. By Lemma \ref{lem:dia_UAW} the eigenvalues $\{\theta_i^*\}_{i=0}^d$ of $B$ are mutually distinct.
Let $w_i$ denote the $\theta_i^*$-eigenvector of $B$ in $R_d(a,b,c)$ for $i = 0,1,\ldots,d$. Note that $\{w_i\}_{i=0}^d$ form a basis for $R_d(a,b,c)$. 
Given any element $X \in \Re$ let $\langle X\rangle$ denote the matrix representing $X$ with respect to $\{w_i\}_{i=0}^d$. By construction the matrix $\langle B\rangle$ is
$$
\begin{pmatrix}
\theta^*_0 & & &  &{\bf 0}
\\ 
&\theta^*_1 
\\
& &\theta^*_2
 \\
& & &\ddots
 \\
{\bf 0} & & & &\theta^*_d
\end{pmatrix}.
$$
Let $i$ be an integer with $0\leq i \leq d$. Applying $w_i$ to either side of (\ref{BA}) yields $(B-\theta_{i-1}^*)(B-\theta_{i+1}^*)Aw_i$ is a scalar multiple of $w_i$. Hence
$$
(B-\theta_{i-1}^*)(B-\theta_{i}^*)(B-\theta_{i+1}^*)Aw_i=0.
$$
It follows that $\langle A\rangle$ is a tridiagonal matrix. Since the $\Re$-module $R_d(a,b,c)$ is irreducible, the tridiagonal matrix $\langle A\rangle$ is irreducible. 
Therefore (iii) follows.
\end{proof}

\begin{prop}
\label{lem:lt}
Let $d\geq 0$ denote an integer and let $a,b,c$ denote scalars in $\F$. Suppose that the $\Re$-module $R_d(a,b,c)$ is irreducible. Then the following are equivalent:
\begin{enumerate}
\item $A,B,C$ are diagonalizable on $R_d(a,b,c)$.

\item $A,B,C$ are multiplicity-free on $R_d(a,b,c)$.

\item $A,B,C$ act as a Leonard triple on $R_d(a,b,c)$.
\end{enumerate}
\end{prop}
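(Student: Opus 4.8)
The plan is to follow the template of the proof of Proposition~\ref{lem:lp}, now splitting the argument into the three conditions of Definition~\ref{defn:lt} and invoking two of the six relations of Lemma~\ref{presentation2} for each. The equivalence (i)~$\Leftrightarrow$~(ii) is immediate from Lemma~\ref{lem:dia_UAW} applied to each of $A,B,C$ separately, and (iii)~$\Rightarrow$~(i) is immediate from Definition~\ref{defn:lt}, since each operator is diagonal in one of the three displayed bases. Thus all the content lies in (ii)~$\Rightarrow$~(iii): assuming $A,B,C$ are each multiplicity-free, I must produce the three bases of Definition~\ref{defn:lt}.

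For condition~(i) of Definition~\ref{defn:lt} I would take the eigenbasis $\{v_i\}_{i=0}^d$ of $A$, ordered so that $v_i$ affords $\theta_i=(a+\frac d2-i)(a+\frac d2-i+1)$; multiplicity-freeness forces the $\theta_i$ to be mutually distinct. Writing $Bv_i=\sum_j c_j v_j$ and applying relation~(\ref{AB}) to $v_i$, the centrality of $\alpha$ and $\delta$ makes the right-hand side a scalar multiple of $v_i$, while the left-hand side becomes $\sum_j c_j\big((\theta_j-\theta_i)^2-2(\theta_j+\theta_i)\big)v_j$. The key computation is that, for eigenvalues of this quadratic shape, the polynomial $(\theta_j-\theta_i)^2-2(\theta_j+\theta_i)$ in $\theta_j$ has exactly the two roots $\theta_{i-1}$ and $\theta_{i+1}$; distinctness of the $\theta$'s then forces $c_j=0$ unless $j\in\{i-1,i,i+1\}$, so $[B]$ is tridiagonal. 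Running the same computation with relation~(\ref{AC}) in place of~(\ref{AB}) shows $[C]$ is tridiagonal in the \emph{same} ordered basis, the orderings agreeing automatically because both relations cut out the identical three-term window $\{i-1,i,i+1\}$ around $\theta_i$.

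Irreducibility of these two tridiagonal matrices I would extract from irreducibility of the module: since $C=\delta-A-B$ and $D=\frac12[A,B]$, the operators $A,B$ already generate the image of $\Re$ and hence act irreducibly, so a vanishing superdiagonal (resp.\ subdiagonal) entry of $[B]$ would expose a proper invariant subspace spanned by a terminal (resp.\ initial) segment of $\{v_i\}_{i=0}^d$; the identical reasoning with the pair $A,C$ (using $B=\delta-A-C$) handles $[C]$. Conditions~(ii) and~(iii) of Definition~\ref{defn:lt} then follow by cyclic relabelling: for~(ii) diagonalize $B$ and use relations~(\ref{BC}) and~(\ref{BA}); for~(iii) diagonalize $C$, whose eigenbasis and eigenvalues $\theta^\e_i$ are furnished by Lemma~\ref{eigenvalues_C}, and use relations~(\ref{CA}) and~(\ref{CB}). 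I expect no genuine obstacle beyond bookkeeping; the only points needing care are the shared root computation for $(\theta_j-\theta_i)^2-2(\theta_j+\theta_i)$ and the accompanying observation that, because each of $A,B,C$ is multiplicity-free, the two relations used in each case carve out a common three-term window, so the resulting pair of tridiagonal matrices genuinely lives in one eigenbasis, exactly as a Leonard triple requires.
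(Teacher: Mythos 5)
Your proposal is correct and follows essentially the same route as the paper: the paper disposes of (i)$\Leftrightarrow$(ii) via Lemma~\ref{lem:dia_UAW} and (iii)$\Rightarrow$(i) via Definition~\ref{defn:lt}, and proves (ii)$\Rightarrow$(iii) by exactly the argument you describe, applying the relations of Lemma~\ref{presentation2} in each eigenbasis to get the pair of irreducible tridiagonal matrices (your explicit root computation for $(\theta_j-\theta_i)^2-2(\theta_j+\theta_i)$ is just the unpacked form of the paper's statement that $(A-\theta_{i-1})(A-\theta_{i+1})Bv_i$ is a scalar multiple of $v_i$). The only detail worth making explicit is that $\theta_{-1}$ and $\theta_{d+1}$ are defined by extending the eigenvalue formula to all integers $i$, as the paper does in the proof of Proposition~\ref{lem:lp}.
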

\begin{proof}
(i) $\Leftrightarrow$ (ii): Immediate from Lemma \ref{lem:dia_UAW}.

(iii) $\Rightarrow$ (i): Immediate from Definition \ref{defn:lt}.

(ii) $\Rightarrow$ (iii): In view of Lemma \ref{presentation2}, the part follows by an argument similar to the proof of Proposition \ref{lem:lp}.
\end{proof}

\section{Preliminaries on the even-dimensional irreducible $\H$-modules}\label{Pre:Hmodule}

\begin{prop}
[\!\!\cite{Huang:BImodule}]
\label{prop:E}
For any scalars $a,b,c\in \F$ and any odd integer $d\geq 1$, there exists a $(d+1)$-dimensional $\H$-module $E_d(a,b,c)$ satisfying the following conditions:
\begin{enumerate}
\item There exists a basis $\{v_i\}_{i=0}^d$ for $E_d(a,b,c)$ such that 
\begin{align*}
t_0 v_i
&=
\left\{
\begin{array}{ll}
\textstyle
\displaystyle i(d-i+1)v_{i-1}-\frac{d-2i+1}{2}v_i
\qquad
&
\hbox{for $i=2,4,\ldots,d-1$},
\\
\displaystyle \frac{d-2i-1}{2}v_i+v_{i+1}
\qquad
&
\hbox{for $i=1,3,\ldots,d-2$},
\end{array}
\right.
\\
t_0 v_0&=
-\frac{d+1}{2}v_0,
\qquad
t_0 v_d=
-\frac{d+1}{2}v_d,
\\
t_1 v_i
&=
\left\{
\begin{array}{ll}
i(i-d-1)v_{i-1}+av_i+v_{i+1}
\qquad
&
\hbox{for $i=2,4,\ldots,d-1$},
\\
-a v_i
\qquad
&
\hbox{for $i=1,3,\ldots,d$},
\end{array}
\right.
\\
t_1 v_0 &=
a v_0+ v_1,
\\
t_2 v_i
&=
\left\{
\begin{array}{ll}
b v_i
\qquad
&
\hbox{for $i=0,2,\ldots,d-1$},
\\
\textstyle
-(\sigma+i)(\tau+i)
v_{i-1}-b v_i-v_{i+1}
\qquad
&
\hbox{for $i=1,3,\ldots,d-2$},
\end{array}
\right.
\\
t_2 v_d &=
-(\sigma+d)(\tau+d) v_{d-1}-b v_d,
\\
t_3 v_i 
&=
\left\{
\begin{array}{ll}
\displaystyle -\frac{\sigma+\tau+2i+2}{2}v_i-v_{i+1}
\qquad
&
\hbox{for $i=0,2,\ldots,d-1$},
\\
\displaystyle (\sigma+i)(\tau+i)v_{i-1}+\frac{\sigma+\tau+2i}{2}v_i
\qquad
&
\hbox{for $i=1,3,\ldots,d$},
\end{array}
\right.
\end{align*}
where
\begin{gather*}
\sigma =
a+b+c-\frac{d+1}{2} ,
\qquad 
\tau =
a+b-c-\frac{d+1}{2}.
\end{gather*}

\item The elements $t_0^{2}, t_1^{2},t_2^{2},t_3^{2}$ act on $E_d(a,b,c)$ as scalar multiplication by
$
\frac{(d+1)^{2}}{4},
a^{2},
b^{2},
c^{2}
$
respectively.
\end{enumerate}
\end{prop}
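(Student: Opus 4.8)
The plan is to read the displayed formulas as the \emph{definition} of four linear operators $t_0,t_1,t_2,t_3$ on the $(d+1)$-dimensional $\F$-vector space $V$ with basis $\{v_i\}_{i=0}^d$, and then to check that these operators satisfy the defining relations of $\H$ from Definition \ref{defn:H}. Because any assignment of the generators to linear maps on $V$ extends uniquely to a representation of the free algebra, the only thing to verify is that $t_0+t_1+t_2+t_3=-\mathrm{id}_V$ and that each $t_i^2$ is central. At the outset I would record the observation that condition (ii)---that $t_0^2,t_1^2,t_2^2,t_3^2$ act as the scalars $\frac{(d+1)^2}{4},a^2,b^2,c^2$---is strictly stronger than centrality, since a scalar operator commutes with everything. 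Hence verifying (i) and (ii) at once both exhibits the $\H$-module $E_d(a,b,c):=V$ and certifies its asserted properties, so the whole proposition reduces to two families of identities on the basis.

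First I would establish $t_0+t_1+t_2+t_3=-\mathrm{id}_V$ by applying the sum to each $v_i$, organised by the parity of $i$ together with the two boundary vectors $v_0$ and $v_d$ (recall $d$ is odd, so $v_0$ is even-indexed and $v_d$ is odd-indexed). In every case the off-diagonal contributions cancel in pairs: on an even-indexed vector the subdiagonal coefficients of $t_0$ and $t_1$ are negatives of one another via $i(i-d-1)=-i(d-i+1)$, while on an odd-indexed vector the subdiagonal coefficients of $t_2$ and $t_3$ cancel; in both cases the two superdiagonal coefficients ($+1$ and $-1$) cancel as well. What survives is the diagonal, and substituting $\sigma+\tau=2a+2b-(d+1)$ collapses it to $-1$ throughout. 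This step is short.

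The bulk of the work is condition (ii), and I would avoid expanding $t_i^2v_j$ blindly by exploiting that each generator is diagonal on one parity class of the basis and off-diagonal on the other. For $t_1$ and $t_2$ this yields a clean argument: letting $\e$ denote the grading involution that is $+1$ on even-indexed and $-1$ on odd-indexed basis vectors, one checks $t_1=a\e+N_1$ and $t_2=b\e+N_2$, where $N_1$ sends even-indexed vectors into the span of the odd-indexed ones and kills the odd-indexed vectors, and $N_2$ does the reverse. In each case $N_i^2=0$ and $\e N_i+N_i\e=0$, whence $t_1^2=(a\e+N_1)^2=a^2\,\mathrm{id}_V$ and likewise $t_2^2=b^2\,\mathrm{id}_V$. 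For $t_0$ and $t_3$ the diagonal part is not a constant multiple of $\e$, so instead I would observe that each preserves a decomposition of $V$ into invariant lines and planes: $t_0$ fixes $\langle v_0\rangle$ and $\langle v_d\rangle$ and pairs the planes $\langle v_{2k-1},v_{2k}\rangle$, whereas $t_3$ pairs the planes $\langle v_{2k},v_{2k+1}\rangle$. On each invariant plane the operator is represented by a traceless $2\times2$ matrix whose determinant is $-\frac{(d+1)^2}{4}$ for $t_0$ and $-c^2$ for $t_3$; since a traceless $2\times2$ matrix squares to $-\det$ times the identity, Cayley--Hamilton gives $t_0^2=\frac{(d+1)^2}{4}\,\mathrm{id}_V$ and $t_3^2=c^2\,\mathrm{id}_V$, and the invariant lines contribute the same scalars. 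The determinant evaluation for $t_3$ uses $\sigma-\tau=2c$.

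I expect the main obstacle to be organisational rather than conceptual: one must keep the parity cases and the two boundary vectors straight and confirm a short list of elementary polynomial identities, chiefly $i(d-i+1)+\big(\frac{d-2i+1}{2}\big)^2=\frac{(d+1)^2}{4}$ for $t_0$ and $\big(\frac{\sigma+\tau+2i+2}{2}\big)^2-(\sigma+i+1)(\tau+i+1)=\frac{(\sigma-\tau)^2}{4}$ for $t_3$. The grading-and-block reformulation above is exactly what keeps these checks from proliferating, replacing the full $(d+1)\times(d+1)$ computation by a single anticommutation relation or one $2\times2$ determinant per generator, uniformly in $d$.
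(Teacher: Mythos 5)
Your verification is correct: the cancellation of the off-diagonal terms in $t_0+t_1+t_2+t_3$, the nilpotent-plus-grading decomposition giving $t_1^2=a^2$ and $t_2^2=b^2$, and the traceless $2\times 2$ block argument giving $t_0^2=\tfrac{(d+1)^2}{4}$ and $t_3^2=c^2$ all check out (I verified the two key polynomial identities and the block decompositions, including the degenerate case $d=1$), and since scalar operators are central this is all that the presentation of $\H$ requires. The paper offers no proof of this proposition --- it is imported verbatim from \cite{Huang:BImodule} --- so there is nothing to compare against beyond noting that a direct check of the defining relations on the given basis is exactly what is called for, and your block/grading organisation is a clean way to carry it out.
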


\begin{thm}
[Theorem 2.5, \cite{Huang:BImodule}]
\label{thm:irr_E}
Let $a,b,c\in \F$ and let $d\geq 1$ denote an odd integer. Then the $\H$-module $E_d(a,b,c)$ is irreducible if and only if
$$
a+b+c,-a+b+c,a-b+c,a+b-c
\not\in 
\left\{
\frac{d-1}{2}-i\,\bigg |\, i=0,2,\ldots,d-1
\right\}.
$$
\end{thm}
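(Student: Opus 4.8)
The plan is to read the proper $\H$-submodules of $E_d(a,b,c)$ straight off the explicit action in Proposition \ref{prop:E} for the reducibility direction, and to exclude them for the irreducibility direction. First I would record, for each basis vector $v_k$, which generators can lower the index. Inspecting the formulas: from $v_k$ with $k$ even only $t_0,t_1$ produce a $v_{k-1}$-term, with coefficients $k(d-k+1)$ and $k(k-d-1)=-k(d-k+1)$, which are nonzero for $2\le k\le d-1$; from $v_k$ with $k$ odd only $t_2,t_3$ produce a $v_{k-1}$-term, each with coefficient $\pm(\sigma+k)(\tau+k)$. Since raising can only increase the index by one and never leaves a tail, the subspace $\mathrm{span}\{v_k,v_{k+1},\dots,v_d\}$ is an $\H$-submodule exactly when $k$ is odd and $(\sigma+k)(\tau+k)=0$; no invariant tail can begin at an even index. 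As $(\sigma+k)(\tau+k)=0$ with $k$ odd is equivalent to $a+b+c=\frac{d+1}{2}-k$ or $a+b-c=\frac{d+1}{2}-k$, letting $k$ run through $1,3,\dots,d$ shows that $E_d(a,b,c)$ is reducible whenever $a+b+c$ or $a+b-c$ lies in $\{\frac{d-1}{2}-i\mid i=0,2,\dots,d-1\}$.

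The remaining two forbidden values $-a+b+c$ and $a-b+c$ cannot be seen through coordinate tails: the corresponding submodules are spanned by eigenvectors of $t_1$ rather than by a tail of the $\{v_i\}$. Already in the case $d=1$ one finds, besides the tail $\mathrm{span}\{v_1\}$, the invariant line $\mathrm{span}\{2a\,v_0+v_1\}$, whose invariance forces $(\sigma+1)(\tau+1)=4ab$, i.e. $-a+b+c=0$ or $a-b+c=0$. I would handle these two cases by the $\mathfrak{S}_3$-symmetry of $\H$: each permutation of $t_1,t_2,t_3$ extends to an automorphism of $\H$ (the defining relations are symmetric in the four generators), and under the transpositions swapping $t_1$ with $t_2$ and with $t_3$ the module $E_d(a,b,c)$ is carried to $E_d$ with the two corresponding parameters interchanged. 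Such a permutation fixes $a+b+c$ while cyclically permuting $a+b-c,\ -a+b+c,\ a-b+c$, so the tail computation of the first paragraph, transported by these automorphisms, produces the two missing submodules and finishes the reducibility direction. Equivalently, one can construct an explicit $t_1$-eigenbasis, in the spirit of the change of basis in Lemma \ref{eigenvalues_C}, with respect to which the module is again bidiagonal and a new tail becomes invariant exactly when $-a+b+c$ or $a-b+c$ is forbidden.

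For the converse I would assume that none of the four expressions lies in $\{\frac{d-1}{2}-i\mid i=0,2,\dots,d-1\}$ and prove that every nonzero $\H$-submodule $W$ equals $V$. In the given basis all the lowering coefficients $k(d-k+1)$ and $(\sigma+k)(\tau+k)$ are now nonzero, so the only candidate proper submodules that are tails are excluded; running the same analysis in the $\mathfrak{S}_3$-permuted presentations excludes the eigenvector-type submodules as well. The task is then to show that a proper nonzero submodule must appear as an invariant tail in at least one of these three presentations, whence none can exist. I emphasize that the naive argument ``raise to the top and lower back down'' does \emph{not} suffice by itself: the example above shows that a proper submodule can consist entirely of eigenvectors of the raising operator even when every connecting coefficient in the given basis is nonzero, which is precisely why the symmetric presentations are needed.

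The main obstacle is exactly the pair of conditions $-a+b+c,\ a-b+c$: because their submodules are invisible as spans of basis vectors, both producing them (reducibility) and excluding them (irreducibility) require leaving the basis of Proposition \ref{prop:E}. I expect the cleanest route is to make the $\mathfrak{S}_3$-action on the family $\{E_d(a,b,c)\}$ precise — verifying that the automorphism of $\H$ interchanging two of $t_1,t_2,t_3$ really sends $E_d(a,b,c)$ to $E_d$ with the two parameters interchanged — after which all four conditions follow from the single tail computation by symmetry, and the converse reduces to the claim that proper submodules are detected by tails in some presentation. A secondary technical point is the degenerate parameter values (for instance $a=0$, or coincidences among the diagonal entries of $t_1$) at which $t_1$ fails to be diagonalizable; these must be checked separately, but they do not affect the stated equivalence.
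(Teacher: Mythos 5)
First, a point of comparison: the paper does not prove this statement. Theorem~\ref{thm:irr_E} is imported verbatim as Theorem~2.5 of \cite{Huang:BImodule}, so there is no in-paper proof to measure your argument against; what follows assesses your proposal on its own terms.

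Your tail analysis is correct and is the right opening move: $\mathrm{span}\{v_k,\ldots,v_d\}$ is invariant exactly for odd $k$ with $(\sigma+k)(\tau+k)=0$, which accounts precisely for the conditions on $a+b+c$ and $a+b-c$, and your $d=1$ computation of the extra invariant line $\mathrm{span}\{2av_0+v_1\}$ (an eigenline of $t_1$, invariant iff $(-a+b+c)(a-b+c)=0$) is also correct. Nevertheless there are two genuine gaps. First, for the conditions $-a+b+c$ and $a-b+c$ you rely on the claim that twisting $E_d(a,b,c)$ by a permutation of $t_1,t_2,t_3$ yields $E_d$ with cyclically permuted parameters. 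The only mechanism in the paper for such an identification is Theorem~\ref{thm:onto2_E}, which presupposes irreducibility and rests on the classification Theorems~\ref{thm:onto_E} and \ref{thm:iso_E} from the very source whose irreducibility criterion you are trying to prove; invoking it here is circular, and at reducible parameter values --- exactly where the reducibility direction needs the statement --- trace and central-character invariants do not determine the isomorphism class (a non-split extension of $L_1$ by $L_2$ and one of $L_2$ by $L_1$ share all such invariants). So you must exhibit the intertwiner explicitly, i.e.\ carry out your ``alternative route'' of a $t_1$-adapted basis in the spirit of Lemma~\ref{eigenvalues_C}; this is announced but not done.

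Second, and more seriously, the irreducibility direction is reduced to the assertion that every proper nonzero submodule must appear as an invariant coordinate tail in one of three presentations. That assertion is the entire content of this direction of the theorem, no argument is offered for it, and it is not a priori clear why an arbitrary submodule should be a tail in one of finitely many prescribed bases. Your own $d=1$ example shows why the difficulty is real: a nonzero vector $w$ of a proper submodule can be an eigenvector of the raising operator $t_0+t_1$, so the natural strategy of pushing $w$ up to $v_d$ and descending via the nonzero lowering coefficients breaks down not at the descent (which only uses two of the four conditions) but at the step of isolating a single basis vector inside the submodule --- and it is exactly there that all four conditions $\pm a+ b+ c$, $a\pm b\pm c\notin\{\frac{d-1}{2}-i\}$ must enter. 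As written, the proposal is a correct and well-motivated plan for one half of one direction, together with a statement of intent for the rest.
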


Recall that $\{\pm 1\}$ is a group under multiplication and the group $\{\pm 1\}^2$ is isomorphic to the Klein $4$-group.
By Definition \ref{defn:H}  there exists a unique $\{\pm 1\}^2$-action on $\H$ such that each $\e \in \{\pm 1\}^2$ acts on $\H$ as an algebra automorphism in the following way:

\begin{table}[H]
\centering
\extrarowheight=3pt
\begin{tabular}{c|rrrr}
$u$  &$t_0$ &$t_1$ &$t_2$ &$t_3$ 
\\

\midrule[1pt]

${u^{(1,1)}}$ &$t_0$  &$t_1$ &$t_2$ &$t_3$ 
\\
${u^{(1,-1)}}$ &$t_1$ &$t_0$ &$t_3$ &$t_2$ 
\\
${u^{(-1,1)}}$ &$t_2$ &$t_3$ &$t_0$ &$t_1$
\\
${u^{(-1,-1)}}$ &$t_3$ &$t_2$ &$t_1$ &$t_0$
\end{tabular}
\caption{The $\{\pm 1\}^{2}$-action on $\H$}\label{Z/4Z-action}
\end{table}

Let $V$ denote an $\H$-module. For any algebra automorphism $\e$ of $\H$ the notation 
$
V^\e
$ 
stands for an alternate $\H$-module structure on $V$ given by 
$$
xv:=x^\e v
\qquad \hbox{for all $x\in \H$ and all $v\in V$}.
$$

\begin{thm}
[Theorem 6.3, \cite{Huang:BImodule}]
\label{thm:onto_E}
Let $d\geq 1$ denote an odd integer. Suppose that $V$ is a $(d+1)$-dimensional irreducible $\H$-module. Then there exist $a,b,c\in \F$ and $\e\in \{\pm 1\}^2$ such that the $\H$-module $E_d(a,b,c)^\e$ is isomorphic to $V$.
\end{thm}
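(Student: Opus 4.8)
The plan is to transfer the classification to the already-understood $\Re$-modules through the homomorphism $\zeta$, after identifying a distinguished generator that respects the $\Re$-structure. Since $V$ is finite-dimensional and irreducible over the algebraically closed field $\F$, Schur's lemma together with Definition \ref{defn:H} gives that each $t_i^{2}$ acts on $V$ as a scalar, say $t_i^{2}=\mu_i$. A computation valid on any $\H$-module then shows that $t_0$ commutes with $A,B,C$: writing $A=\frac{(t_0+t_1)^2-1}{4}$ and expanding $(t_0+t_1)^2=\mu_0+\mu_1+(t_0t_1+t_1t_0)$, one gets $[t_0,\,t_0t_1+t_1t_0]=\mu_0t_1-\mu_0t_1=0$, and the analogous computations with $t_2,t_3$ handle $B,C$. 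Hence $t_0$ commutes with $A,B,C$, with $D=\tfrac12[A,B]$ and with $\delta$, so $t_0$ is an endomorphism of $V$ viewed as an $\Re$-module through $\zeta$. The same statement holds for any of the four generators after a twist, so the first real task is to prove that one of $\mu_0,\mu_1,\mu_2,\mu_3$ equals $\frac{(d+1)^2}{4}$; granting this, I would use a twist $\e\in\{\pm1\}^2$ from Table \ref{Z/4Z-action} to relabel so that the extreme square sits on $t_0$, i.e.\ $\mu_0=\frac{(d+1)^2}{4}\neq0$.

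With $\mu_0\neq0$ the minimal polynomial of $t_0$ divides $x^2-\mu_0$, so $t_0$ is semisimple with eigenvalues $\pm\frac{d+1}{2}$, and its eigenspaces $V_{+},V_{-}$ are $\Re$-submodules giving a decomposition $V=V_{+}\oplus V_{-}$. Because $t_1,t_2,t_3$ do not commute with all of $A,B,C$, they intertwine $V_{+}$ and $V_{-}$, and $\H$-irreducibility of $V$ forces $\dim V_{+}=\dim V_{-}=\frac{d+1}{2}$ and the irreducibility of $V_{\pm}$ as $\Re$-modules. By Theorem \ref{thm:onto_UAW} each of $V_{+},V_{-}$ is isomorphic to a Racah module $R_{(d-1)/2}(a',b',c')$, and I would read off the parameters from the scalars $\mu_1,\mu_2,\mu_3$ (their square roots providing $a,b,c$) together with the $A,B,C$-spectra supplied by Proposition \ref{prop:UAWd}.

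It then remains to reconstruct the entire $\H$-action and compare it with Proposition \ref{prop:E}. Fixing bases of $V_{+}$ and $V_{-}$ adapted to the eigenvalue ladders of $A,B,C$, the relations $t_i^{2}=\mu_i$ and $t_0+t_1+t_2+t_3=-1$ rigidly determine the off-diagonal action of $t_1,t_2,t_3$ between the two halves up to the rescaling freedom of the bases, and matching the resulting matrices entry-by-entry with the formulas of Proposition \ref{prop:E} (consistency being guaranteed by the irreducibility criterion of Theorem \ref{thm:irr_E}) yields $V\cong E_d(a,b,c)^\e$.

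The principal obstacle is the input claim that one of the $\mu_i$ is forced to equal $\frac{(d+1)^2}{4}$, for this is what ties the abstract spectral data to the dimension and singles out the twist $\e$; I expect to prove it by analyzing the ladder generated by the noncommuting generators on the $\Re$-eigenspaces and showing that the extreme weight must be attained. The secondary difficulty is the reconstruction itself: the $\Re$-decomposition records only the action of $A,B,C$ on each half and not how $t_1,t_2,t_3$ cross between $V_{+}$ and $V_{-}$, so recovering those off-diagonal coefficients—and the correct signs of $a,b,c$—demands showing that the defining relations leave no freedom beyond a change of basis.
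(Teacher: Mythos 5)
The first thing to note is that the paper does not prove Theorem \ref{thm:onto_E}: it is imported verbatim from Theorem 6.3 of \cite{Huang:BImodule}, so there is no internal proof to compare against. Judged on its own terms, your opening step is correct and is the natural starting point: each $t_i^2$ is central, hence acts on $V$ by a scalar $\mu_i$, and the computation $[t_0,\,t_0t_1+t_1t_0]=\mu_0t_1-t_1\mu_0=0$ (and its analogues with $t_2,t_3$) shows that $t_0$ commutes with $A,B,C$, hence with $\delta$ and $D=\tfrac12[A,B]$, so $t_0$ is an $\Re$-module endomorphism of $V$.

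Beyond that, the proposal contains one outright error and two acknowledged but unfilled gaps. The error: after normalizing so that $\mu_0=\frac{(d+1)^2}{4}$, you claim $\dim V_+=\dim V_-=\frac{d+1}{2}$ and that both eigenspaces are copies of $R_{(d-1)/2}(a',b',c')$. This is false for exactly the modules being classified: by Lemma \ref{lem:det_E} the trace of $t_0$ on $E_d(a,b,c)$ is $-(d+1)$ while its eigenvalues are $\pm\frac{d+1}{2}$, which forces eigenspace dimensions $\frac{d+3}{2}$ and $\frac{d-1}{2}$; correspondingly Theorem \ref{thm:CF_even}(ii) gives composition factors $R_{(d+1)/2}$ and $R_{(d-3)/2}$, not two copies of $R_{(d-1)/2}$. (The balanced picture you describe occurs for the twisted modules, where the generator whose square is $\frac{(d+1)^2}{4}$ is $t_1$, $t_2$ or $t_3$ --- i.e.\ precisely not the normalization you chose.) The mechanism you invoke, that $t_1,t_2,t_3$ ``intertwine'' $V_+$ and $V_-$, is unjustified: these generators do not anticommute with $t_0$, and their diagonal blocks relative to the $t_0$-eigenspace decomposition need not vanish --- the relations only constrain $t_0t_i+t_it_0$, which automatically commutes with $t_0$ whatever the diagonal blocks are. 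Likewise, $\H$-irreducibility of $V$ does not by itself force the $\Re$-irreducibility of $V_\pm$. The gaps: (1) the claim that some $\mu_i$ equals $\frac{(d+1)^2}{4}$, which you correctly identify as the crux but do not prove --- this is where the dimension enters and is the hardest part of the classification; (2) the asserted ``rigidity'' of reconstructing the full $\H$-action from the $\Re$-structure together with $t_i^2=\mu_i$ and $t_0+t_1+t_2+t_3=-1$, which is essentially the entire content of the theorem and is not argued. As it stands the proposal is a plausible outline whose load-bearing steps are missing and whose intermediate bookkeeping is incorrect.
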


\begin{thm}
[Theorem 5.3, \cite{Huang:BImodule}]
\label{thm:iso_E}
Let $a,b,c\in \F$ and let $d\geq 1$ denote an odd integer. Suppose that the $\H$-module $E_d(a,b,c)$ is irreducible. Then the following hold:
\begin{enumerate}
\item The $\H$-module $E_d(a,b,c)$ is isomorphic to $E_d(-a,b,c)$.

\item The $\H$-module $E_d(a,b,c)$ is isomorphic to $E_d(a,-b,c)$.

\item The $\H$-module $E_d(a,b,c)$ is isomorphic to $E_d(a,b,-c)$.
\end{enumerate}
\end{thm}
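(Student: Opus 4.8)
My plan is to exhibit, in each of the three cases, an explicit invertible linear map intertwining the two $\H$-module structures, organising the cases so that the trivial one is cleared away first. Part~(iii) is essentially free: in Proposition~\ref{prop:E} the parameter $c$ enters the actions of $t_0,t_1,t_2,t_3$ only through $\sigma$ and $\tau$, and the substitution $c\mapsto -c$ simply interchanges $\sigma$ and $\tau$. Every coefficient involving $\sigma,\tau$---the products $(\sigma+i)(\tau+i)$ in the $t_2$- and $t_3$-actions and the sums $\sigma+\tau+2i$ and $\sigma+\tau+2i+2$ in the $t_3$-action---is symmetric in $\sigma$ and $\tau$. Hence, relative to the bases of Proposition~\ref{prop:E}(i), the matrices defining $E_d(a,b,c)$ and $E_d(a,b,-c)$ are literally identical and the identity map is the required isomorphism; this is consistent with Proposition~\ref{prop:E}(ii), where $t_3^2$ acts as $c^2$ on both.

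Parts~(i) and~(ii) are analogous, so I would carry out~(i), the sign change $a\mapsto -a$, in full. By Theorem~\ref{thm:irr_E} the irreducibility condition is invariant under $a\mapsto -a$, so $E_d(-a,b,c)$ is again irreducible, and by Proposition~\ref{prop:E}(ii) the central elements $t_0^2,t_1^2,t_2^2,t_3^2$ act by the same scalars $(d+1)^2/4$, $a^2$, $b^2$, $c^2$ on both modules. It remains to construct a linear isomorphism $\phi\colon E_d(a,b,c)\to E_d(-a,b,c)$ commuting with each generator. To locate $\phi$ I would use that the $t_0$-action in Proposition~\ref{prop:E} does not involve $a$: the formulas show that $t_0$ preserves each plane $\mathrm{span}\{v_{2k-1},v_{2k}\}$ together with the lines $\F v_0$ and $\F v_d$, so $t_0$ is block diagonal and $\phi$ must lie in its centraliser. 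This restricts $\phi$ to a few parameters, which I would then fix by requiring that $\phi$ be compatible with $t_1$; since $t_1^2$ acts as $a^2$, the operator $t_1$ has eigenvalues $\pm a$ on each module, but the relation of its eigenspaces to the standard basis differs after $a\mapsto -a$, and matching them determines $\phi$.

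The main obstacle is verifying that this $\phi$ also intertwines $t_2$ and $t_3$, for these are where $a$ enters nonlinearly, through $(\sigma+i)(\tau+i)$ with $\sigma,\tau$ depending on $a$. Under $a\mapsto -a$ these off-diagonal coefficients genuinely change, and one must check that the change of basis encoded by $\phi$ reconciles them; I expect this to reduce, after inserting the explicit $\phi$, to a family of Pochhammer-type identities among the entries $i(d-i+1)$, $i(i-d-1)$ and $(\sigma+i)(\tau+i)$. Once the four generators are matched, the defining relation $t_0+t_1+t_2+t_3=-1$ is automatically respected, so this coefficient bookkeeping is the computational heart of the argument. Part~(ii) then follows by the same method, with $t_1$ and $a$ replaced throughout by $t_2$ and $b$ (whose eigenvalues $\pm b$ play the role of $\pm a$), leading to the same kind of identities.
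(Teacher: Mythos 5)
The paper does not prove this theorem; it imports it from \cite{Huang:BImodule}, so there is no internal proof to compare against. Judged on its own terms, your proposal is complete only for part~(iii), and that part is indeed correct and clean: $c$ enters Proposition~\ref{prop:E}(i) only through $\sigma$ and $\tau$, the substitution $c\mapsto -c$ swaps them, and every coefficient --- $(\sigma+i)(\tau+i)$, $\sigma+\tau+2i$, $\sigma+\tau+2i+2$ --- is symmetric in $\sigma,\tau$, so the defining matrices of $E_d(a,b,c)$ and $E_d(a,b,-c)$ coincide and the identity map is an isomorphism.

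For parts~(i) and~(ii), however, what you have written is a plan, not a proof, and the missing piece is exactly the part that carries all the content. Under $a\mapsto -a$ both $\sigma$ and $\tau$ shift by $-2a$, so the off-diagonal coefficients $(\sigma+i)(\tau+i)$ in the $t_2$- and $t_3$-actions and the diagonal entries $\frac{\sigma+\tau+2i}{2}$ genuinely change; you never write down the intertwiner $\phi$, and the verification that it matches $t_2$ and $t_3$ is deferred to ``a family of Pochhammer-type identities'' that are neither stated nor checked. Your mechanism for pinning down $\phi$ is also weaker than claimed: $t_0$ is block diagonal with each $2\times 2$ block having eigenvalues $\pm\frac{d+1}{2}$, so its centralizer is large (roughly two copies of $GL_{(d+1)/2}$), not ``a few parameters''; you need irreducibility and the full generating set to conclude that a nonzero intertwiner is unique up to scalar, and existence still has to be established by exhibiting one. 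Finally, the reduction of~(ii) to~(i) by ``replacing $t_1,a$ with $t_2,b$'' is not a mechanical substitution: $b$ also enters through $\sigma$ and $\tau$, and the even/odd index pattern of the $t_2$-action differs from that of $t_1$, so the computation must be redone (or one must first conjugate by the automorphism of Table~\ref{Z/4Z-action} interchanging $t_1$ and $t_2$ and identify the resulting module, which again requires an argument). As it stands, two of the three assertions remain unproved.
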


\begin{lem}
\label{lem:det_E}
Let $a,b,c\in \F$ and let $d\geq 1$ denote an odd integer. Then the traces of $t_0,t_1,t_2,t_3$ on $E_d(a,b,c)$ are $-(d+1),0,0,0$ respectively.
\end{lem}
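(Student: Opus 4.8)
The plan is to compute each trace directly from the explicit matrix entries in Proposition \ref{prop:E}(i), since the trace of an operator on $E_d(a,b,c)$ is the sum of the diagonal entries of its matrix with respect to the basis $\{v_i\}_{i=0}^d$. Because $d$ is odd, the index set $\{0,1,\ldots,d\}$ splits into the $\frac{d+1}{2}$ even indices $0,2,\ldots,d-1$ and the $\frac{d+1}{2}$ odd indices $1,3,\ldots,d$, and the action formulas are stated case-by-case according to this parity. So the whole argument reduces to reading off the coefficient of $v_i$ in $t_j v_i$ for each $i$ and each $j$, and then summing.

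For $t_1$ I would observe that the coefficient of $v_i$ in $t_1 v_i$ equals $a$ when $i$ is even (the formula $t_1 v_0 = a v_0 + v_1$ agreeing with the general even case) and $-a$ when $i$ is odd. Since there are equally many even and odd indices, these cancel and $\operatorname{tr}(t_1)=0$. The computation for $t_2$ is identical with $a$ replaced by $b$: the diagonal entry is $b$ for even $i$ and $-b$ for odd $i$ (the formula at $i=d$ agreeing with the general odd case), whence $\operatorname{tr}(t_2)=0$.

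For $t_3$ I would pair each even index $i$ with the odd index $i+1$. The diagonal entry at the even index $i$ is $-\frac{\sigma+\tau+2i+2}{2}$, while the diagonal entry at the odd index $i+1$ is $\frac{\sigma+\tau+2(i+1)}{2}=\frac{\sigma+\tau+2i+2}{2}$; these are exact negatives, so every such pair contributes $0$ and hence $\operatorname{tr}(t_3)=0$. Thus the vanishing of the last three traces is forced purely by the parity symmetry built into the action of $t_1,t_2,t_3$.

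The only computation with genuine content is $\operatorname{tr}(t_0)$. Here the diagonal entry is $-\frac{d-2i+1}{2}$ for even $i$ (the formula for $i=0$ agreeing with the general even case) and $\frac{d-2i-1}{2}$ for odd $i$ (the formula for $i=d$ agreeing with the general odd case). Writing $i=2j$ and $i=2j+1$ with $j=0,1,\ldots,\frac{d-1}{2}$, each of the even and the odd contributions is an arithmetic progression in $j$, and summing each one gives $-\frac{d+1}{2}$, so $\operatorname{tr}(t_0)=-(d+1)$. I expect this arithmetic sum to be the main step, though it is entirely routine; everything else follows from the even/odd cancellation described above.
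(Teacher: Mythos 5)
Your computation is correct and is exactly the argument the paper intends: its proof of this lemma consists of the single sentence that the claim is routine to verify from Proposition \ref{prop:E}(i), and your reading off the diagonal coefficients, exploiting the even/odd cancellation for $t_1,t_2,t_3$, and summing the arithmetic progressions for $t_0$ is precisely that routine verification carried out in full.
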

\begin{proof}
It is routine to verify the lemma by using Proposition \ref{prop:E}(i).\end{proof}

By means of Theorem \ref{thm:iso_E} and Lemma \ref{lem:det_E} we develop the following discriminant to determine the scalars $a,b,c \in \F$ and $\e \in \{\pm 1\}^2$ in Theorem \ref{thm:onto_E}:

\begin{thm}
\label{thm:onto2_E}
Let $d\geq 1$ denote an odd integer. Suppose that $V$ is a $(d+1)$-dimensional irreducible $\H$-module. For any scalars $a,b,c \in \F$ and any $\e\in \{\pm 1\}^{2}$ the following are equivalent:
\begin{enumerate}
\item The $\H$-module $E_d(a,b,c)^\e$ is isomorphic to $V$.
\item The trace of $t_0$  on $V^{\e}$ is $-(d+1)$ and $t_1^{2},t_2^{2},t_3^{2}$ act on $V^{\e}$ as scalar multiplication by $a^{2},b^{2},c^{2}$ respectively.
\end{enumerate}
\end{thm}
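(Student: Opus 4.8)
The plan is to establish the two implications separately, using throughout the elementary fact that every element of $\{\pm 1\}^2$ is an involution, so the corresponding automorphism $\phi_\e$ of $\H$ satisfies $\phi_\e\circ\phi_\e=\mathrm{id}$ and hence $(W^\e)^\e\cong W$ for every $\H$-module $W$. This twist identity is what lets me convert any statement about $V$ into the corresponding statement about $V^\e$, and it also means that both conditions (i) and (ii) are really statements about the single module $V^\e$.

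For the direction (i)$\Rightarrow$(ii) I would start from the isomorphism $E_d(a,b,c)^\e\cong V$ and twist both sides by $\e$ to get $E_d(a,b,c)\cong V^\e$. Since the trace of $t_0$ and the scalars by which the central elements $t_1^2,t_2^2,t_3^2$ act are invariants of the isomorphism class, I can read them off directly from $E_d(a,b,c)$: Lemma \ref{lem:det_E} gives that the trace of $t_0$ on $E_d(a,b,c)$, and hence on $V^\e$, is $-(d+1)$, while Proposition \ref{prop:E}(ii) gives that $t_1^2,t_2^2,t_3^2$ act on $E_d(a,b,c)$, and hence on $V^\e$, as scalar multiplication by $a^2,b^2,c^2$. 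This is (ii).

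The direction (ii)$\Rightarrow$(i) is where the real work lies, and it is precisely the point of introducing the trace of $t_0$ as a discriminant. Since $V^\e$ is again a $(d+1)$-dimensional irreducible $\H$-module, Theorem \ref{thm:onto_E} produces scalars $a',b',c'\in\F$ and $\e'\in\{\pm1\}^2$ with $V^\e\cong E_d(a',b',c')^{\e'}$. The crucial step is to force $\e'=(1,1)$: by the twist identity the trace of $t_0$ on $E_d(a',b',c')^{\e'}$ equals the trace of the image $(t_0)^{\e'}$ on $E_d(a',b',c')$, and by Table \ref{Z/4Z-action} this image is one of $t_0,t_1,t_2,t_3$, whose traces on $E_d(a',b',c')$ are $-(d+1),0,0,0$ by Lemma \ref{lem:det_E}. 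Because the trace of $t_0$ on $V^\e$ is $-(d+1)\neq 0$ (as $d\geq 1$), the only possibility is $(t_0)^{\e'}=t_0$, i.e.\ $\e'=(1,1)$, so $V^\e\cong E_d(a',b',c')$.

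It then remains to identify $(a',b',c')$ with $(a,b,c)$ up to signs and to remove those signs. Comparing the central scalars via Proposition \ref{prop:E}(ii) gives $a'^2=a^2$, $b'^2=b^2$, $c'^2=c^2$, hence $a'=\pm a$, $b'=\pm b$, $c'=\pm c$. Since $E_d(a',b',c')\cong V^\e$ is irreducible, Theorem \ref{thm:iso_E} applies and, after flipping the relevant signs one at a time, yields $E_d(a',b',c')\cong E_d(a,b,c)$. Therefore $V^\e\cong E_d(a,b,c)$, and twisting once more by $\e$ gives $V\cong E_d(a,b,c)^\e$, which is (i). I expect the trace-of-$t_0$ argument pinning down $\e'=(1,1)$ to be the main obstacle, as it is the step that genuinely uses the Klein four-group structure of the twists together with the trace data; the remaining sign-removal is a routine application of Theorem \ref{thm:iso_E}.
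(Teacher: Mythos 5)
Your proposal is correct and follows essentially the same route as the paper: both directions use Theorem \ref{thm:onto_E} to produce some $E_d(a',b',c')^{\e'}$, the trace of $t_0$ together with Lemma \ref{lem:det_E} and Table \ref{Z/4Z-action} to pin down the twist, Proposition \ref{prop:E}(ii) to get $a'=\pm a$, $b'=\pm b$, $c'=\pm c$, and Theorem \ref{thm:iso_E} to remove the signs. The only (immaterial) difference is that you apply the classification to $V^\e$ and force $\e'=(1,1)$, whereas the paper applies it to $V$ and concludes $\e'=\e$.
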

\begin{proof}
(i) $\Rightarrow$ (ii): By (i) the $\H$-module $E_d(a,b,c)$ is isomorphic to $V^{\e}$. Hence (ii) follows by Proposition \ref{prop:E}(ii) and Lemma \ref{lem:det_E}.

(ii) $\Rightarrow$ (i): By Theorem \ref{thm:onto_E} there are an $\e'\in \{\pm 1\}^{2}$ and $a',b',c'\in \F$ such that the $\H$-module $E_d(a',b',c')^{\e'}$ is isomorphic to $V$. Hence the $\H$-module $E_d(a',b',c')$ is isomorphic to $V^{\e'}$. By Lemma \ref{lem:det_E} the traces of $t_0,t_1,t_2,t_3$ on $V^{\e'}$ are $-(d+1),0,0,0$ respectively. Since the trace of $t_0$ on $V^{\e}$ is $-(d+1)$, it follows from Table \ref{Z/4Z-action} that $\e=\e'$.
Combined with Proposition \ref{prop:E}(ii) this yields that $a',b',c'$ are $\pm a,\pm b,\pm c$ respectively. 
Now (i) follows from Theorem \ref{thm:iso_E}.
\end{proof}

\section{The conditions for $A,B,C$ as diagonalizable on even-dimensional irreducible $\H$-modules}\label{s:even}

For convenience we adopt the following conventions in this section: Let $d\geq 1$ denote an odd integer. Let $a,b,c$ denote any scalars in $\F$. Let $\{v_i\}_{i=0}^d$ denote the basis for $E_d(a,b,c)$ from Proposition \ref{prop:E}(i).

\begin{lem}
\label{lem:t0t1_E}
The action of $t_0+t_1$ on $E_d(a,b,c)$ is as follows:
\begin{align*}
\left(
t_0+t_1+(-1)^i\left(
\frac{d}{2}-a-i
\right)
+\frac{1}{2}
\right)
v_i
=\left\{
\begin{array}{ll}
v_{i+1} 
\qquad 
&\hbox{for $i=0,1,\ldots,d-1$},
\\
0
\qquad 
&\hbox{for $i=d$}.
\end{array}
\right.
\end{align*}
\end{lem}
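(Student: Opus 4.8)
The plan is to verify the identity by a direct computation from the explicit formulas for the actions of $t_0$ and $t_1$ recorded in Proposition \ref{prop:E}(i). Those formulas are presented by cases according to the parity of $i$, together with the two boundary indices $i=0$ and $i=d$; since $d$ is odd, $i=0$ falls under the even-$i$ rule and $i=d$ under the odd-$i$ rule. First I would simply add $t_0 v_i$ to $t_1 v_i$ in each case and collect the resulting coefficients of $v_{i-1}$, $v_i$, and $v_{i+1}$.

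The one structural observation that makes the computation clean is that $(t_0+t_1)v_i$ never involves $v_{i-1}$. For odd $i$ this is immediate, since neither $t_0 v_i$ nor $t_1 v_i$ has a $v_{i-1}$ component. For even $i$ with $2\le i\le d-1$, both actions do contribute a subdiagonal term, but these cancel exactly because
$$
i(d-i+1)+i(i-d-1)=0.
$$
Consequently, in every case $(t_0+t_1)v_i$ is a linear combination of $v_i$ and $v_{i+1}$, with the coefficient of $v_{i+1}$ equal to $1$ for $0\le i\le d-1$ and with no $v_{i+1}$ term when $i=d$. This already accounts for the right-hand side of the asserted identity.

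It then remains to match the coefficient of $v_i$. Reading off Proposition \ref{prop:E}(i), this coefficient equals $a-\frac{d-2i+1}{2}$ for even $i$ (this value also being correct at the boundary $i=0$) and $\frac{d-2i-1}{2}-a$ for odd $i$ (likewise correct at $i=d$). A one-line simplification shows that each of these equals $-(-1)^i\!\left(\frac{d}{2}-a-i\right)-\frac{1}{2}$, which is precisely the coefficient of $v_i$ demanded by the stated formula; moving that term to the left-hand side yields the displayed identity.

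I do not anticipate a genuine obstacle: the computation is elementary once the subdiagonal cancellation $i(d-i+1)+i(i-d-1)=0$ is noticed, and the only point requiring care is the sign bookkeeping produced by the factor $(-1)^i$ when reconciling the separate even and odd cases with the single closed form on the right-hand side.
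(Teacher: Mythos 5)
Your proposal is correct and takes the same route as the paper, whose proof simply states that the lemma is routine to verify from Proposition \ref{prop:E}(i); you have carried out that routine verification explicitly, and the key cancellation $i(d-i+1)+i(i-d-1)=0$ together with the coefficient matching all check out.
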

\begin{proof}
It is routine to verify the lemma by using Proposition \ref{prop:E}(i).
\end{proof}

\begin{lem}\label{lem:t0t2_E}
Suppose that the $\H$-module $E_d(a,b,c)$ is irreducible. Then the following hold:
\begin{enumerate}
\item There exists a basis $\{w_i\}_{i=0}^d$ for $E_d(a,b,c)$ such that 
\begin{align*}
\left(
t_0+t_2+(-1)^i\left(\frac{d}{2}-b-i\right)+\frac{1}{2}
\right)
w_i
=\left\{
\begin{array}{ll}
w_{i+1} 
\qquad 
&\hbox{for $i=0,1,\ldots,d-1$},
\\
0
\qquad 
&\hbox{for $i=d$}.
\end{array}
\right.
\end{align*}

\item There exists a basis $\{w_i\}_{i=0}^d$ for  $E_d(a,b,c)$ such that 
\begin{align*}
\left(
t_0+t_3+(-1)^i\left(\frac{d}{2}-c-i\right)+\frac{1}{2}
\right)
w_i
=\left\{
\begin{array}{ll}
w_{i+1} 
\qquad 
&\hbox{for $i=0,1,\ldots,d-1$},
\\
0
\qquad 
&\hbox{for $i=d$}.
\end{array}
\right.
\end{align*}
\end{enumerate}
\end{lem}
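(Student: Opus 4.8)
The plan is to prove both parts by a single device; I describe it for part (i), part (ii) being formally identical with $t_0+t_3$ and $\eta_i':=(-1)^i(\tfrac{d}{2}-c-i)+\tfrac12$ in place of $t_0+t_2$ and $\eta_i:=(-1)^i(\tfrac{d}{2}-b-i)+\tfrac12$. Rather than writing the basis $\{w_i\}_{i=0}^d$ down explicitly, I define it by the recursion $w_{i+1}:=(t_0+t_2+\eta_i)w_i$ starting from a suitable $w_0$. Then $(t_0+t_2+\eta_i)w_i=w_{i+1}$ holds automatically for $0\le i\le d-1$, and only two points remain: that $\{w_i\}_{i=0}^d$ is a basis, and that $(t_0+t_2+\eta_d)w_d=0$. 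Since $w_d=\prod_{i=0}^{d-1}(t_0+t_2+\eta_i)\,w_0$, the latter reads $\prod_{i=0}^{d}(t_0+t_2+\eta_i)\,w_0=0$, which follows from the Cayley--Hamilton theorem once the characteristic polynomial of $t_0+t_2$ on $E_d(a,b,c)$ is known to be $\prod_{i=0}^{d}(x+\eta_i)$; the basis claim will follow from cyclicity of $w_0$.

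For part (i) this is quick. A direct computation from Proposition \ref{prop:E}(i) shows that, in the basis $\{v_i\}_{i=0}^d$, the operator $t_0+t_2$ is lower bidiagonal: its superdiagonal vanishes identically (at even $i$ neither $t_0$ nor $t_2$ contributes a superdiagonal term, while at odd $i$ the contributions $+v_{i+1}$ from $t_0$ and $-v_{i+1}$ from $t_2$ cancel), leaving diagonal entry $-\eta_i$ and subdiagonal entry $i(d-i+1)$ for even $i$ and $-(\sigma+i)(\tau+i)$ for odd $i$. Being triangular, $t_0+t_2$ has characteristic polynomial $\prod_{i=0}^{d}(x+\eta_i)$ by inspection. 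Taking $w_0:=v_d$, the recursion forces $w_i\in\mathrm{span}\{v_{d-i},\dots,v_d\}$ with $v_{d-i}$-coefficient equal to the product of the subdiagonal entries in rows $d-i+1,\dots,d$; by Theorem \ref{thm:irr_E} irreducibility is exactly the statement that $\sigma+i$ and $\tau+i$ are nonzero for all odd $i$, so every subdiagonal entry is nonzero, these leading coefficients do not vanish, and $\{w_i\}_{i=0}^d$ is a basis. Cayley--Hamilton then yields $(t_0+t_2+\eta_d)w_d=0$, completing (i).

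Part (ii) is where the real work lies. The same bookkeeping shows that $t_0+t_3$ is genuinely tridiagonal in $\{v_i\}$: at even $i$ its subdiagonal comes from $t_0$ and its superdiagonal from $t_3$, and these do not cancel. In fact its diagonal entries collapse to just two values, $-1-a-b$ at even $i$ and $a+b-1$ at odd $i$, which are unrelated to the desired eigenvalues $-\eta_i'$; triangularity is lost and the characteristic polynomial is no longer visible by inspection. What survives is that $t_0+t_3$ is irreducible tridiagonal---its off-diagonal entries $i(d-i+1)$, $(\sigma+i)(\tau+i)$ and $\pm1$ are all nonzero under Theorem \ref{thm:irr_E}---so it is non-derogatory and $v_0$ is a cyclic vector, which secures the basis claim within the scheme above. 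The crux, and the step I expect to be the main obstacle, is to show that this tridiagonal operator nevertheless has characteristic polynomial $\prod_{i=0}^{d}(x+\eta_i')$. I would obtain this either by running the three-term recurrence for the tridiagonal characteristic determinant and checking that it telescopes to the claimed product, or---following the template of Lemma \ref{eigenvalues_C}---by positing an explicit change of basis $w_i=\sum_h\kappa_{i,h}v_h$ with Pochhammer-type coefficients and verifying the recurrence directly from Proposition \ref{prop:E}(i). Two consistency checks guide the computation: the trace identity $\mathrm{tr}(t_0+t_3)=-(d+1)=\sum_{i=0}^{d}(-\eta_i')$, and the invariance of the target spectrum under $c\mapsto-c$, which must hold because $E_d(a,b,c)\cong E_d(a,b,-c)$ by Theorem \ref{thm:iso_E}(iii).

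I note finally that no automorphism shortcut is available: the $\{\pm1\}^2$-action of Table \ref{Z/4Z-action} realizes only the three double transpositions of $\{t_0,t_1,t_2,t_3\}$, none of which fixes $t_0$ while carrying $t_1$ to $t_2$ or to $t_3$, so neither part can be reduced to Lemma \ref{lem:t0t1_E} by twisting the module. This is exactly why part (ii) does not inherit the triangular structure enjoyed by part (i) and must be settled by a direct spectral computation.
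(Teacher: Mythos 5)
Your argument for part (i) is correct and genuinely different from the paper's: the cancellation of the $v_{i+1}$-contributions of $t_0$ and $t_2$ at odd $i$ does make $t_0+t_2$ triangular in the basis of Proposition \ref{prop:E}(i) with diagonal entries $-\eta_i$, and your recursion from $w_0=v_d$ together with Cayley--Hamilton then settles (i) (the nonvanishing of $(\sigma+j)(\tau+j)$ for odd $j$ follows from, though is not equivalent to, the irreducibility criterion of Theorem \ref{thm:irr_E}). Part (ii), however, is not proved. You correctly observe that $t_0+t_3$ is irreducible tridiagonal with diagonal entries $-a-b-1$ and $a+b-1$, so triangularity is lost, and you then identify ``the crux'' as showing that its characteristic polynomial equals $\prod_{i=0}^{d}(x+\eta_i')$ --- but you never carry this out; you only name two possible strategies and two consistency checks (the trace and the $c\mapsto -c$ symmetry) that come nowhere near determining the $d+1$ roots. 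Since everything in your scheme for (ii) --- the vanishing of $(t_0+t_3+\eta_d')w_d$, hence the entire conclusion --- rests on this unproved identity, part (ii) has a genuine gap at exactly the step you flag as the main obstacle.

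The gap is also avoidable, and your closing paragraph explaining why it is not is mistaken. It is true that the $\{\pm 1\}^2$-action of Table \ref{Z/4Z-action} consists only of double transpositions, but that table does not exhaust the automorphisms of $\H$: the defining relations in Definition \ref{defn:H} (the sum of the $t_i$ equals $-1$ and each $t_i^2$ is central) are invariant under \emph{every} permutation of $t_0,t_1,t_2,t_3$, so in particular $(t_0,t_1,t_2,t_3)\mapsto(t_0,t_2,t_3,t_1)$ and $(t_0,t_1,t_2,t_3)\mapsto(t_0,t_3,t_2,t_1)$ are algebra automorphisms fixing $t_0$ and carrying $t_1$ to $t_2$, resp.\ to $t_3$. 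This is precisely the paper's route: twist $E_d(a,b,c)$ by these automorphisms, identify the twisted module as $E_d(b,c,a)$, resp.\ $E_d(c,b,a)$, via Theorem \ref{thm:onto2_E} (using Proposition \ref{prop:E}(ii) and Lemma \ref{lem:det_E}), and then quote Lemma \ref{lem:t0t1_E}. Either adopt that reduction or actually compute the characteristic polynomial of the tridiagonal matrix representing $t_0+t_3$; as written, part (ii) is an outline rather than a proof.
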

\begin{proof}
Let $V$ denote the $\H$-module $E_d(a,b,c)$.

(i): By Definition \ref{defn:H} there exists a unique algebra automorphism $\rho:\H\to \H$ given by 
\begin{eqnarray}\label{auto_t0t2}
(t_0,t_1,t_2,t_3) 
&\mapsto &
(t_0,t_2,t_3,t_1)
\end{eqnarray}
whose inverse sends 
$(t_0,t_1,t_2,t_3)$ to
$(t_0,t_3 ,t_1,t_2)$. By Proposition \ref{prop:E}(ii) the elements $t_0^{2}$, $t_1^{2}$, $t_2^{2}$, $t_3^{2}$ act on $V^\rho$ as scalar multiplication by 
$
\frac{(d+1)^2}{4},b^2,c^2,a^2
$ 
respectively.
By Lemma \ref{lem:det_E} the trace of $t_0$ on $V^\rho$ is $-(d+1)$. Therefore the $\H$-module $V^\rho$ is isomorphic to 
$
E_d(b,c,a)
$
by Theorem \ref{thm:onto2_E}. 
It follows from Lemma \ref{lem:t0t1_E} that there exists a basis $\{w_i\}_{i=0}^d$ for $V^\rho$ such that 
\begin{align*}
\left(
t_0+t_1+(-1)^i\left(\frac{d}{2}-b-i\right)+\frac{1}{2}
\right)
w_i
=\left\{
\begin{array}{ll}
w_{i+1} 
\qquad 
&\hbox{for $i=0,1,\ldots,d-1$},
\\
0
\qquad 
&\hbox{for $i=d$}.
\end{array}
\right.
\end{align*}
It follows from (\ref{auto_t0t2}) that the action of $t_0+t_1$ on $V^\rho$ is identical to the action of $t_0+t_2$ on $V$. Hence (i) follows.

(ii): By Definition \ref{defn:H} there exists a unique algebra automorphism $\rho:\H\to \H$ given by 
\begin{eqnarray}\label{auto_t3t0}
(t_0,t_1,t_2,t_3) 
&\mapsto &
(t_0,t_3,t_2,t_1)
\end{eqnarray}
whose inverse sends 
$(t_0,t_1,t_2,t_3)$ to
$(t_0,t_3,t_2,t_1)$. By Proposition \ref{prop:E}(ii) the elements $t_0^{2}$, $t_1^{2}$, $t_2^{2}$, $t_3^{2}$ act on $V^\rho$ as scalar multiplication by 
$
\frac{(d+1)^2}{4},c^2,b^2,a^2
$
respectively.
By Lemma \ref{lem:det_E} the trace of $t_0$ on $V^\rho$ is $-(d+1)$. Therefore the $\H$-module $V^\rho$ is isomorphic to 
$
E_d(c,b,a)
$
by Theorem \ref{thm:onto2_E}.
It follows from Lemma \ref{lem:t0t1_E} that there exists a basis $\{w_i\}_{i=0}^d$ for $V^\rho$ such that 
\begin{align*}
\left(
t_0+t_1+(-1)^i\left(\frac{d}{2}-c-i\right)+\frac{1}{2}
\right)
w_i
=\left\{
\begin{array}{ll}
w_{i+1} 
\qquad 
&\hbox{for $i=0,1,\ldots,d-1$},
\\
0
\qquad 
&\hbox{for $i=d$}.
\end{array}
\right.
\end{align*}
It follows from (\ref{auto_t3t0}) that the action of $t_0+t_1$ on $V^\rho$ is identical to the action of $t_0+t_3$ on $V$. Hence (ii) follows.
\end{proof}


\begin{lem}\label{lem:t0+t1_vanish}
Let $j$ denote an integer with $0 \le j \le d$. Then
\begin{eqnarray} \label{v0vanish}
\prod_{\substack{i=0 \\ i\not=j}}^d \left(t_0+t_1+(-1)^i\left(\frac{d}{2}-a-i\right)+\frac{1}{2}\right) v_0 \not=0.
\end{eqnarray}
\end{lem}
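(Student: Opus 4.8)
The plan is to track the leading term of the displayed product as it acts on $v_0$, relying only on the single recursion furnished by Lemma~\ref{lem:t0t1_E}. Write $T=t_0+t_1$ and, for $i=0,1,\ldots,d$, put $c_i=(-1)^i(\tfrac{d}{2}-a-i)+\tfrac12$, so that the $i$-th factor appearing in (\ref{v0vanish}) is exactly $T+c_i$ and Lemma~\ref{lem:t0t1_E} reads $(T+c_i)v_i=v_{i+1}$ for $0\le i\le d-1$ together with $(T+c_d)v_d=0$.

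First I would record how an arbitrary shift of $T$ moves each basis vector. From $(T+c_k)v_k=v_{k+1}$ one gets $Tv_k=v_{k+1}-c_kv_k$ for $k<d$, so that for every scalar $c$
$$
(T+c)v_k=v_{k+1}+(c-c_k)v_k\qquad(0\le k\le d-1),
$$
while $(T+c)v_d=(c-c_d)v_d$. Setting $\Phi_k=\mathrm{span}\{v_0,\ldots,v_k\}$, this says that every operator $T+c$ carries $\Phi_k$ into $\Phi_{k+1}$ and satisfies $(T+c)v_k\equiv v_{k+1}\pmod{\Phi_k}$ for $k<d$; the key feature is that this leading behaviour is the same for all $c$ and all $k$.

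The main step is then an induction along this flag. The product in (\ref{v0vanish}) is $\prod_{i\ne j}(T+c_i)$, a product of exactly $d$ commuting factors, one being omitted. Applying these $d$ factors to $v_0$ one at a time and invoking the congruence above at each stage, the top index climbs by one per factor, so after $m$ applications the result lies in $v_m+\Phi_{m-1}$; since no intermediate vector has top index $d$ before the final application, the ``ceiling'' relation $(T+c)v_d=(c-c_d)v_d$ is never used. After all $d$ factors this yields
$$
\prod_{\substack{i=0\\ i\ne j}}^{d}(T+c_i)\,v_0=v_d+u,\qquad u\in\Phi_{d-1}.
$$
As $\{v_i\}_{i=0}^d$ is a basis we have $v_d\notin\Phi_{d-1}$, so the right-hand side is nonzero, which is the assertion of (\ref{v0vanish}).

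The only delicate point is the bookkeeping that exactly $d$ factors are applied, so that the climb begins at $v_0$ and terminates precisely at $v_d$ without overshooting the available range; this is where the omission of one index enters, although the particular value of $j$ is irrelevant, since all factors $T+c_i$ act identically on each graded piece. Equivalently, one could phrase the argument by noting that $v_k=\prod_{i=0}^{k-1}(T+c_i)v_0$ exhibits $v_0$ as a cyclic vector for $T$, whence the monic annihilator of $v_0$ has degree $d+1$; the product in question is a nonzero polynomial in $T$ of degree $d<d+1$ and therefore cannot annihilate $v_0$.
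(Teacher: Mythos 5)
Your argument is correct and is essentially the paper's own proof: the paper likewise applies the factors one at a time, using Lemma \ref{lem:t0t1_E} to see that the first $j$ factors carry $v_0$ exactly to $v_j$ and that the remaining $d-j$ factors produce $v_d$ plus a linear combination of $v_0,\ldots,v_{d-1}$, which is nonzero. Your flag/leading-term bookkeeping (and the cyclic-vector reformulation at the end) is just a slightly more systematic packaging of the same computation.
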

\begin{proof}
It is immediate from Lemma \ref{lem:t0t1_E} that the left--hand side of (\ref{v0vanish}) is equal to 
\begin{eqnarray} \label{vjrepeat}
\prod_{i=j+1}^d \left(t_0+t_1+(-1)^i\left(\frac{d}{2}-a-i\right)+\frac{1}{2}\right) v_j.
\end{eqnarray}
If $j=d$ then (\ref{vjrepeat}) is equal to $v_d$. Suppose that $j \not= d$. Applying Lemma \ref{lem:t0t1_E} yields that (\ref{vjrepeat}) is equal to $v_d$ plus a linear combination of $v_0,v_1,\ldots,v_{d-1}$.
Hence (\ref{vjrepeat}) is nonzero.
The lemma follows.
\end{proof}

\begin{lem}\label{lem:t1t0_diag}
Suppose that the $\H$-module $E_d(a,b,c)$ is irreducible. Then the following are equivalent:
\begin{enumerate}
\item $t_0+t_1$ is diagonalizable on $E_d(a,b,c)$.
\item $t_0+t_1$ is multiplicity-free on $E_d(a,b,c)$.
\item $2a$ is not among $d-1,d-3,\ldots,1-d$.
\end{enumerate}
\end{lem}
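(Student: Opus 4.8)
The plan is to make the problem transparent by reading off the eigenvalues of $L:=t_0+t_1$ directly from Lemma \ref{lem:t0t1_E}. Setting
\[
\theta_i=-(-1)^i\Bigl(\tfrac{d}{2}-a-i\Bigr)-\tfrac12\qquad(0\le i\le d),
\]
that lemma says $(L-\theta_i)v_i=v_{i+1}$ for $0\le i\le d-1$ and $(L-\theta_d)v_d=0$. Hence $L$ is lower bidiagonal in the basis $\{v_i\}_{i=0}^d$ with diagonal entries $\theta_0,\dots,\theta_d$, so its characteristic polynomial is $\prod_{i=0}^d(x-\theta_i)$ and its eigenvalues are exactly the values occurring among the $\theta_i$, with algebraic multiplicities equal to the number of repetitions. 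I would also record that $v_i=\prod_{h=0}^{i-1}(L-\theta_h)v_0$, so that $v_0$ is a cyclic vector for $L$; this is the structural fact that drives the diagonalizable direction.

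For the equivalence (ii) $\Leftrightarrow$ (iii) I would evaluate $\theta_i$ by parity: for even $i$ one gets $\theta_i=a+i-\frac{d+1}{2}$, and for odd $i$ one gets $\theta_i=\frac{d-1}{2}-a-i$. Each family is strictly monotonic in $i$, so the $\theta_i$ within a fixed parity class are pairwise distinct, and a coincidence can occur only between an even index $i$ and an odd index $i'$. Such a coincidence $\theta_i=\theta_{i'}$ is equivalent to $2a=d-(i+i')$. The bookkeeping step is to verify that as $i$ ranges over the even integers in $[0,d-1]$ and $i'$ over the odd integers in $[1,d]$ (here $d$ odd is essential), the sum $i+i'$ realizes exactly the odd integers $1,3,\dots,2d-1$; consequently $d-(i+i')$ fills out precisely $\{d-1,d-3,\dots,1-d\}$. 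Therefore the $\theta_i$ are all distinct, equivalently $L$ is multiplicity-free, if and only if $2a\notin\{d-1,d-3,\dots,1-d\}$, which is (iii).

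To close the loop with (i), the implication (ii) $\Rightarrow$ (i) is immediate: a multiplicity-free $L$ has $d+1$ distinct eigenvalues on a $(d+1)$-dimensional space and is therefore diagonalizable. For (i) $\Rightarrow$ (ii) I would argue by contradiction using Lemma \ref{lem:t0+t1_vanish}. If $L$ is diagonalizable, its minimal polynomial is the product of the distinct factors $x-\theta$; were $L$ not multiplicity-free, say $\theta_j=\theta_k$ with $j\ne k$, then this minimal polynomial would divide $\prod_{i\ne j}(x-\theta_i)$, forcing $\prod_{i\ne j}(L-\theta_i)v_0=0$, which contradicts Lemma \ref{lem:t0+t1_vanish}. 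Hence diagonalizability forces the $\theta_i$ to be distinct, giving (ii).

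I expect the principal obstacle to be the combinatorial verification in the second paragraph: pinning down that $i+i'$ sweeps out every odd value in $[1,2d-1]$ and nothing else, so that the forbidden set is exactly $\{d-1,d-3,\dots,1-d\}$, requires a little care with the parity ranges and with the hypothesis that $d$ is odd. The remainder is the standard cyclic-vector principle, with Lemma \ref{lem:t0+t1_vanish} supplying precisely the nonvanishing needed to exclude a smaller minimal polynomial.
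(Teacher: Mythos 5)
Your proposal is correct and follows essentially the same route as the paper: it reads the eigenvalues off Lemma \ref{lem:t0t1_E}, separates them by parity to get the equivalence of (ii) and (iii), and uses the nonvanishing from Lemma \ref{lem:t0+t1_vanish} (i.e.\ that $v_0$ is cyclic, so the annihilator of $v_0$ equals the characteristic polynomial) to derive (ii) from (i). The only difference is that you spell out the parity bookkeeping for which coincidences $\theta_i=\theta_{i'}$ can occur, which the paper leaves implicit.
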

\begin{proof}
By Lemma \ref{lem:t0t1_E} this characteristic polynomial of $t_0+t_1$ in $E_d(a,b,c)$ has the roots
\begin{align}
&(-1)^{i-1}\left(\frac{d}{2}-a-i\right)-\frac{1}{2} 
\label{eigen_t0t1_E}
\end{align}
for $i=0,1,\ldots,d$.

(ii) $\Leftrightarrow$ (iii): Since $\F$ is of characteristic zero, the scalars (\ref{eigen_t0t1_E}) for $i=0,2,\ldots,d-1$ are mutually distinct and the scalars (\ref{eigen_t0t1_E}) for $i=1,3,\ldots,d$ are mutually distinct. 
Hence the scalars (\ref{eigen_t0t1_E}) for all $i=0,1,\ldots,d$ are mutually distinct if and only if (iii) holds. Therefore (ii) and (iii) are equivalent.

(ii), (iii) $\Rightarrow$ (i): Trivial.

(i) $\Rightarrow$ (ii), (iii): By Lemma \ref{lem:t0t1_E} the product 
\begin{gather*}
\prod_{i=0}^d 
\left(
t_0+t_1+(-1)^i\left(\frac{d}{2}-a-i\right)+\frac{1}{2}
\right)
\end{gather*}
vanishes at $v_0$. Combined with Lemma \ref{lem:t0+t1_vanish} the $(t_0+t_1)$-annihilator of $v_0$ is equal to the characteristic polynomial of $(t_0+t_1)$ in $E_d(a,b,c)$. Therefore (i) implies (ii) and (iii).
\end{proof}

\begin{lem}\label{lem:t2t0_diag}
Suppose that the $\H$-module $E_d(a,b,c)$ is irreducible. Then the following are equivalent:
\begin{enumerate}
\item $t_0+t_2$ is diagonalizable on $E_d(a,b,c)$.

\item $t_0+t_2$ is multiplicity-free on $E_d(a,b,c)$.

\item $2b$ is not among $d-1,d-3,\ldots,1-d$.
\end{enumerate}
\end{lem}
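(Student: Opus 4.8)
The plan is to deduce the three equivalences from the already-proven Lemma \ref{lem:t1t0_diag} by transporting along the algebra automorphism $\rho$ of $\H$ used in the proof of Lemma \ref{lem:t0t2_E}(i). Write $V=E_d(a,b,c)$ and recall that $\rho$ sends $(t_0,t_1,t_2,t_3)\mapsto(t_0,t_2,t_3,t_1)$. Because $\rho(t_0)=t_0$ and $\rho(t_1)=t_2$, the operator by which $t_0+t_1$ acts on the twisted module $V^\rho$ is exactly the operator by which $t_0+t_2$ acts on $V$: these are one and the same linear map on the common underlying space $V$. Hence the diagonalizability and the multiplicity-freeness of $t_0+t_2$ on $V$ are literally the diagonalizability and multiplicity-freeness of $t_0+t_1$ on $V^\rho$.

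The key steps are then as follows. First I would record that, as shown in the proof of Lemma \ref{lem:t0t2_E}(i), the $\H$-module $V^\rho$ is isomorphic to $E_d(b,c,a)$; since diagonalizability and multiplicity-freeness of a fixed generator are invariant under $\H$-module isomorphism, statements (i) and (ii) of the present lemma are equivalent to the assertions that $t_0+t_1$ is diagonalizable, respectively multiplicity-free, on $E_d(b,c,a)$. Second, I would check that $E_d(b,c,a)$ is irreducible so that Lemma \ref{lem:t1t0_diag} applies: by Theorem \ref{thm:irr_E} irreducibility of $E_d(a,b,c)$ is controlled by the four scalars $a+b+c,\,-a+b+c,\,a-b+c,\,a+b-c$ avoiding a fixed set, and this quadruple is permuted among itself by every permutation of $a,b,c$, so $E_d(b,c,a)$ is irreducible precisely because $E_d(a,b,c)$ is. Finally, applying Lemma \ref{lem:t1t0_diag} to $E_d(b,c,a)$ shows that each of (i), (ii) is equivalent to $2b$ not lying among $d-1,d-3,\ldots,1-d$, which is condition (iii). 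This closes all three equivalences at once.

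In this reduction there is no serious obstacle; the only points demanding care are the symmetry of the irreducibility criterion of Theorem \ref{thm:irr_E} under permuting $a,b,c$ and the bookkeeping of the operator identity $(t_0+t_1)^\rho=t_0+t_2$. Should one prefer a self-contained argument paralleling the proof of Lemma \ref{lem:t1t0_diag}, the alternative is to work directly from the basis $\{w_i\}_{i=0}^d$ supplied by Lemma \ref{lem:t0t2_E}(i): its recurrence has exactly the same shape as that of Lemma \ref{lem:t0t1_E} with $b$ and $w_i$ in place of $a$ and $v_i$, so the characteristic polynomial of $t_0+t_2$ has roots $(-1)^{i-1}(\frac{d}{2}-b-i)-\frac{1}{2}$ for $i=0,1,\ldots,d$, and the equivalence (ii)$\Leftrightarrow$(iii) follows from the same even/odd parity count. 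For that route the real work, and the analog of the main step of Lemma \ref{lem:t1t0_diag}, is re-establishing the exact analog of Lemma \ref{lem:t0+t1_vanish}, namely that deleting any one factor from the product $\prod_{i=0}^d(t_0+t_2+(-1)^i(\frac{d}{2}-b-i)+\frac{1}{2})$ leaves an operator that does not annihilate $w_0$; granting this, the $(t_0+t_2)$-annihilator of $w_0$ equals the characteristic polynomial, forcing (i)$\Rightarrow$(ii),(iii).
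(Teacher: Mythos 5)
Your primary argument is correct, and it takes a slightly different (and slicker) route than the paper. The paper's proof of this lemma is the one you describe as the ``self-contained'' alternative: it invokes the basis $\{w_i\}_{i=0}^d$ of Lemma \ref{lem:t0t2_E}(i) and then re-runs the proof of Lemma \ref{lem:t1t0_diag} verbatim with $b$ in place of $a$, including re-establishing the analog of the vanishing statement of Lemma \ref{lem:t0+t1_vanish} for $w_0$. Your main route instead transports the \emph{statement} of Lemma \ref{lem:t1t0_diag} rather than its proof: since $t_0+t_1$ acts on $V^\rho$ by the very same linear operator as $t_0+t_2$ on $V$, and $V^\rho\cong E_d(b,c,a)$ as $\H$-modules (as established inside the proof of Lemma \ref{lem:t0t2_E}(i)), all three conditions of the present lemma become the three conditions of Lemma \ref{lem:t1t0_diag} for $E_d(b,c,a)$. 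Your two supporting checks are both sound: the quadruple $a+b+c,\,-a+b+c,\,a-b+c,\,a+b-c$ in Theorem \ref{thm:irr_E} is indeed permuted by the substitution $(a,b,c)\mapsto(b,c,a)$, so $E_d(b,c,a)$ is irreducible (alternatively, irreducibility follows at once from $E_d(b,c,a)\cong V^\rho$ with $V$ irreducible and $\rho$ an automorphism); and diagonalizability and multiplicity-freeness of a fixed algebra element are preserved under module isomorphism. What your route buys is that the annihilator computation of Lemma \ref{lem:t0+t1_vanish} need not be repeated; what the paper's route buys is that it only uses the \emph{statement} of Lemma \ref{lem:t0t2_E}(i) (existence of the basis with the given recurrence) rather than reaching back into its proof for the isomorphism $V^\rho\cong E_d(b,c,a)$. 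Either way the result follows; no gap.
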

\begin{proof}
In view of Lemma \ref{lem:t0t2_E}(i), the lemma follows by an argument similar to the proof of Lemma \ref{lem:t1t0_diag}.
\end{proof}

\begin{lem}\label{lem:t3t0_diag}
Suppose that the $\H$-module $E_d(a,b,c)$ is irreducible. Then the following are equivalent:
\begin{enumerate}
\item $t_0+t_3$ is diagonalizable on $E_d(a,b,c)$.

\item $t_0+t_3$ is multiplicity-free on $E_d(a,b,c)$.

\item $2c$ is not among $d-1,d-3,\ldots,1-d$.
\end{enumerate}
\end{lem}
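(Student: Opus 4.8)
The plan is to run, almost verbatim, the argument that established Lemma~\ref{lem:t1t0_diag}, with $t_0+t_3$ playing the role of $t_0+t_1$ and with Lemma~\ref{lem:t0t2_E}(ii) substituting for Lemma~\ref{lem:t0t1_E}. Concretely, Lemma~\ref{lem:t0t2_E}(ii) supplies a basis $\{w_i\}_{i=0}^d$ on which $t_0+t_3$ acts as a lower bidiagonal operator with unit subdiagonal: $(t_0+t_3)w_i$ equals $-\bigl((-1)^i(\frac{d}{2}-c-i)+\frac12\bigr)w_i$ plus $w_{i+1}$, the shift term being absent when $i=d$. Reading off the diagonal, the characteristic polynomial of $t_0+t_3$ on $E_d(a,b,c)$ has the roots
\[
(-1)^{i-1}\left(\frac{d}{2}-c-i\right)-\frac12,\qquad i=0,1,\ldots,d,
\]
which is exactly the list (\ref{eigen_t0t1_E}) with $c$ in place of $a$.

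Next I would establish (ii)$\Leftrightarrow$(iii) by the same parity bookkeeping used before. Because $\F$ has characteristic zero, the roots indexed by even $i$ are pairwise distinct, and likewise the roots indexed by odd $i$ are pairwise distinct; thus $t_0+t_3$ fails to be multiplicity-free precisely when some even-indexed root coincides with some odd-indexed one. A short computation equating the two expressions shows this coincidence occurs if and only if $2c$ lies in $\{d-1,d-3,\ldots,1-d\}$, the bounded index ranges of the two parity classes being responsible for the finiteness of the list. Hence (ii) and (iii) are equivalent.

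For the remaining implications, (ii) or (iii) $\Rightarrow$ (i) is trivial, since a multiplicity-free operator is diagonalizable. The direction (i)$\Rightarrow$(ii),(iii) is handled exactly as in Lemma~\ref{lem:t1t0_diag}: one first records the analogue of Lemma~\ref{lem:t0+t1_vanish}, namely that for each $0\le j\le d$ the product over $i\ne j$ of the factors $t_0+t_3+(-1)^i(\frac{d}{2}-c-i)+\frac12$ does not annihilate $w_0$. This follows word-for-word from the shift structure of Lemma~\ref{lem:t0t2_E}(ii): collapsing the product telescopes $w_0$ up to $w_j$ and then to $w_d$ plus a linear combination of lower-indexed basis vectors. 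Consequently the $(t_0+t_3)$-annihilator of $w_0$ has degree $d+1$ and therefore equals the characteristic polynomial, so $w_0$ is a cyclic vector and the minimal polynomial of $t_0+t_3$ coincides with its characteristic polynomial. If $t_0+t_3$ is diagonalizable its minimal polynomial has simple roots, forcing the characteristic polynomial to have simple roots as well, i.e.\ multiplicity-freeness.

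Since the whole argument is structurally identical to the $t_0+t_1$ case, I expect no genuine obstacle. The only point demanding a little care is the computation in the second paragraph pinning the coincidence set down to exactly $\{d-1,d-3,\ldots,1-d\}$ rather than an unbounded arithmetic progression; this is controlled by the allowed ranges $i=0,2,\ldots,d-1$ and $i=1,3,\ldots,d$ of the two parity classes, and is routine.
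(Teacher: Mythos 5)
Your proposal is correct and is exactly the paper's intended argument: the paper's proof of this lemma consists of the single remark that, in view of Lemma \ref{lem:t0t2_E}(ii), the result follows by the same argument as Lemma \ref{lem:t1t0_diag}, which is precisely the substitution ($a\mapsto c$, $t_0+t_1\mapsto t_0+t_3$, $v_i\mapsto w_i$) you carry out. Your spelled-out details (the eigenvalue list, the parity coincidence computation yielding $2c\in\{d-1,d-3,\ldots,1-d\}$, and the cyclic-vector argument for (i)$\Rightarrow$(ii)) all check out.
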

\begin{proof}
In view of Lemma \ref{lem:t0t2_E}(ii), the lemma follows by an argument similar to the proof of Lemma \ref{lem:t1t0_diag}.
\end{proof}

\begin{lem}\label{lem:t1t3+t1t3inv_diag}
Suppose that the $\H$-module $E_d(a,b,c)$ is irreducible. Then the following hold:
\begin{enumerate}

\item If $2a$ is not among $d-3,d-5,\ldots,3-d$, then 
$
(t_2+t_3-1)(t_2+t_3+1)
$
is diagonalizable on $E_d(a,b,c)$.

\item If $2b$ is not among $d-3,d-5,\ldots,3-d$, then 
$
(t_1+t_3-1)(t_1+t_3+1)
$
is diagonalizable on $E_d(a,b,c)$.

\item If $2c$ is not among $d-3,d-5,\ldots,3-d$, then 
$
(t_1+t_2-1)(t_1+t_2+1)
$
is diagonalizable on $E_d(a,b,c)$.

\end{enumerate}
\end{lem}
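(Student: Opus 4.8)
The plan is to prove part (i) in full; parts (ii) and (iii) then follow by repeating the argument verbatim with Lemma~\ref{lem:t0t1_E} replaced by Lemma~\ref{lem:t0t2_E}(i) and Lemma~\ref{lem:t0t2_E}(ii), and with the scalar $a$ replaced by $b$ and $c$ respectively. For part (i), I would first use the defining relation~(\ref{t0123}) to substitute $t_2+t_3=-1-t_0-t_1$, which gives
$$
(t_2+t_3-1)(t_2+t_3+1)=(t_2+t_3)^2-1=(t_0+t_1+1)^2-1.
$$
Setting $Y=t_0+t_1+1$, it therefore suffices to show that $Y^2$ is diagonalizable on $E_d(a,b,c)$, since $Y^2-1$ and $Y^2$ differ by a scalar.

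Next I would read off the eigenvalue structure of $t_0+t_1$ (hence of $Y$) from Lemma~\ref{lem:t0t1_E}: in the basis $\{v_i\}_{i=0}^d$ the operator $t_0+t_1$ is lower bidiagonal with diagonal entries
$$
\mu_i=-(-1)^i\Big(\tfrac{d}{2}-a-i\Big)-\tfrac12
\qquad(i=0,1,\ldots,d)
$$
and subdiagonal entries $1$, so its eigenvalues are the $\mu_i$. Splitting by parity, the even-indexed eigenvalues $\mu_i=a+i-\frac{d+1}{2}$ are mutually distinct, and likewise the odd-indexed eigenvalues $\mu_i=-a-i+\frac{d-1}{2}$ are mutually distinct (the field has characteristic zero). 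Hence every eigenvalue of $t_0+t_1$ has multiplicity at most $2$, and a multiplicity-$2$ eigenvalue arises precisely from a collision $\mu_i=\mu_j$ with $i$ even and $j$ odd, which forces $2a=d-(i+j)$ and thus $2a\in\{d-1,d-3,\ldots,1-d\}$.

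Now I would run the case analysis under the hypothesis $2a\notin\{d-3,d-5,\ldots,3-d\}$. If $2a\notin\{d-1,d-3,\ldots,1-d\}$ there is no collision, so $t_0+t_1$ and hence $Y$ is diagonalizable by Lemma~\ref{lem:t1t0_diag}, and then $Y^2$ is diagonalizable. The only remaining possibilities allowed by the hypothesis are $2a=d-1$ and $2a=1-d$. In the first, the unique colliding pair is $(i,j)=(0,1)$ and one checks $\mu_0=\mu_1=-1$; in the second, it is $(i,j)=(d-1,d)$ with $\mu_{d-1}=\mu_d=-1$; in both cases no further index $k$ satisfies $\mu_k=-1$. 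So $Y=t_0+t_1+1$ has the single eigenvalue $0$ of multiplicity $2$ and all other eigenvalues simple. On the $2$-dimensional generalized $0$-eigenspace $W$ of $Y$, the characteristic polynomial of $Y|_W$ is $x^2$, so $Y^2|_W=0$ by the Cayley--Hamilton theorem, while on every other generalized eigenspace $Y$ acts as a scalar. Thus $Y^2$ is a direct sum of scalar operators, hence diagonalizable, and so is $(t_2+t_3-1)(t_2+t_3+1)=Y^2-1$.

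The heart of the argument, and the step I expect to require the most care, is the arithmetic verification that at the two boundary values $2a=\pm(d-1)$ the colliding eigenvalue of $t_0+t_1$ is exactly $-1$, equivalently that $Y=t_0+t_1+1$ acquires its double eigenvalue precisely at $0$. This is what makes squaring collapse the size-two block (the map $x\mapsto x(x+2)$ has its critical point at $x=-1$); for the excluded middle values $2a\in\{d-3,\ldots,3-d\}$ the colliding eigenvalue of $Y$ would instead be nonzero, so the block would survive the squaring. I would also need the bookkeeping confirming that these two boundary values are exactly the ones the hypothesis leaves available and that no coincidence of multiplicity greater than $2$ can occur, both of which reduce to the parity observation that within each parity class the $\mu_i$ are distinct.
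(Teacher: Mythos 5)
Your proof is correct, and while it starts from the same two ingredients as the paper --- the identity $(t_2+t_3-1)(t_2+t_3+1)=(t_0+t_1)(t_0+t_1+2)$ obtained from (\ref{t0123}), and the bidiagonal action of $t_0+t_1$ in Lemma \ref{lem:t0t1_E} --- the mechanism by which you conclude diagonalizability is genuinely different. The paper never examines the Jordan structure of $t_0+t_1$: it writes $L=(t_0+t_1)(t_0+t_1+2)$ as an explicit lower-triangular matrix whose diagonal entries are $\theta_0,\theta_0,\theta_1,\theta_1,\ldots$, notes that the hypothesis makes the $\theta_i$ distinct, and then exhibits a two-dimensional eigenspace for each $\theta_i$ --- for interior $i$ by pairing two eigenvectors of $t_0+t_1$ with the \emph{distinct} eigenvalues $\vartheta_i$ and $-\vartheta_{i+1}$ (both sent to $\theta_i$ by $x\mapsto x(x+2)$), and for the two boundary eigenvalues by observing that the first two rows of $L-\theta_0$ and the last two columns of $L-\theta_{(d-1)/2}$ vanish. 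You instead split on whether $t_0+t_1$ is itself diagonalizable: under the hypothesis the only possible eigenvalue collisions are $2a=\pm(d-1)$, and there the double eigenvalue of $Y=t_0+t_1+1$ lands exactly at $0$, the critical point of the squaring map, so Cayley--Hamilton annihilates the potential size-two block. Your arithmetic at these boundary cases checks out ($\mu_0=\mu_1=-1$ when $2a=d-1$, $\mu_{d-1}=\mu_d=-1$ when $2a=1-d$, with no third index attaining $-1$). What your route buys is a transparent explanation of why exactly the middle values $d-3,\ldots,3-d$ must be excluded (those are the collisions away from the critical point of the quadratic); what the paper's route buys is a uniform eigenvector count with no case analysis, since its boundary argument absorbs the $2a=\pm(d-1)$ degeneracy automatically.
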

\begin{proof}
(i): For convenience we write $L=(t_2+t_3-1)(t_2+t_3+1)$. Given any element $X$ of $\H$, let $[X]$ denote the matrix representing $X$ with respect to $\{v_i\}_{i=0}^d$. 
Using (\ref{t0123}) yields that
\begin{gather*}
L=(t_0+t_1)(t_0+t_1+2).
\end{gather*}
Combined with Lemma \ref{lem:t0t1_E} yields that the matrix $[L]$ is a lower triangular matrix of the form
\begin{gather}\label{[D]}
\begin{pmatrix}
\theta_0 & & & & & &{\bf 0}
\\
0 &\theta_0
\\
  & &\theta_1
\\
  &  & &\theta_1
\\
  & & & &\ddots
\\
  & & & & &\theta_{\frac{d-1}{2}}
\\  
*  & & & & &0 &\theta_{\frac{d-1}{2}}
\end{pmatrix}
\end{gather}
where 
$$
\theta_i=
\left(a-\frac{d+1}{2}+2i\right)\left(a-\frac{d+1}{2}+2i+2\right)
\qquad 
\hbox{for $i=0,1,\ldots,\displaystyle \frac{d-1}{2}$}.
$$

Since $2a$ is not among $d-3,d-5,\ldots,3-d$, the scalars $\{\theta_i\}_{i=0}^{\frac{d-1}{2}}$ are mutually distinct. Hence the $\theta_i$-eigenspace of $L$ in $E_d(a,b,c)$ has dimension less than or equal to two for all $i=0,1,\ldots,\frac{d-1}{2}$. By (\ref{[D]}) the first two rows of $L-\theta_0$ are zero and the last two columns of $L-\theta_{\frac{d-1}{2}}$ are zero. Hence the $\theta_i$-eigenspace of $L$ in $E_d(a,b,c)$ has dimension two for $i=0,\frac{d-1}{2}$.
To see the diagonalizability of $L$ it remains to show that the $\theta_i$-eigenspace of $L$ in $E_d(a,b,c)$ has dimension two for all $i=1,2,\ldots,\frac{d-3}{2}$.

By Lemma \ref{lem:t0t1_E} the matrix $ [t_0+t_1]$ is a lower triangular matrix of the form 
\begin{gather}\label{[t1t3]}
\begin{pmatrix}
\vartheta_0 & & & & & &{\bf 0}
\\
 &-\vartheta_1
\\
  & &\vartheta_1
\\
  &  & &-\vartheta_2
\\
  & & & &\ddots
\\
  & & & & &\vartheta_{\frac{d-1}{2}}
\\  
*  & & & & & &-\vartheta_{\frac{d+1}{2}}
\end{pmatrix}
\end{gather}
where 
$$
\vartheta_i=a-\frac{d+1}{2}+2i
\qquad 
\hbox{for $i=0,1,\ldots,\displaystyle \frac{d+1}{2}$}.
$$

Let $i \in\{1,2,..,\frac{d-3}{2}\}$ be given. Let $u$ and $w$ denote the eigenvectors of $t_0+t_1$ in $E_d(a,b,c)$ corresponding to the eigenvalues $\vartheta_i$ and $-\vartheta_{i+1}$, respectively. Since $2a$ is not among $d-5,d-9,\ldots,5-d$, the scalars $\vartheta_i$ and $-\vartheta_{i+1}$ are distinct. It follows that $u$ and $w$ are linearly independent. Observe that $u$ and $w$ are the $\theta_i$-eigenvectors of $L$. Hence the $\theta_i$-eigenspace of $L$ in $E_d(a,b,c)$ has dimension two. The statement (i) follows.

(ii), (iii): Using Lemma \ref{lem:t0t2_E}(i), (ii) the statements (ii), (iii) follow by the arguments similar to the proof for (i).
\end{proof}

Recall the finite-dimensional irreducible $\Re$-modules from \S\ref{s:AWmodule} and the even-dimensional irreducible $\H$-modules from \S\ref{Pre:Hmodule}.

\begin{thm}
[\S 4.2--\S 4.5, \cite{Huang:R<BImodules}]
\label{thm:CF_even}
Suppose that the $\H$-module $E_d(a,b,c)$ is irreducible. Then the following hold:
\begin{enumerate}
\item If $d=1$ then the $\Re$-module $E_d(a,b,c)$ is irreducible and it is isomorphic to 
\begin{align*}
R_1\left(-\frac{a+1}{2},-\frac{b+1}{2},-\frac{c+1}{2}\right).
\end{align*}

\item If $d\geq 3$ then the factors of any composition series for the $\Re$-module $E_d(a,b,c)$ are isomorphic to 
\begin{align*}
&R_\frac{d+1}{2}\left(-\frac{a+1}{2},-\frac{b+1}{2},-\frac{c+1}{2}\right),
\\
&R_\frac{d-3}{2}\left(-\frac{a+1}{2},-\frac{b+1}{2},-\frac{c+1}{2}\right).
\end{align*}

\item The factors of any composition series for the $\Re$-module $E_d(a,b,c)^{(1,-1)}$ are isomorphic to
\begin{align*}
&R_\frac{d-1}{2}\left(-\frac{a}{2},-\frac{b+1}{2},-\frac{c+1}{2}\right),
\\
&R_\frac{d-1}{2}\left(-\frac{a}{2}-1,-\frac{b+1}{2},-\frac{c+1}{2}\right).
\end{align*}

\item The factors of any composition series for the $\Re$-module $E_d(a,b,c)^{(-1,1)}$ are isomorphic to
\begin{align*}
&R_\frac{d-1}{2}\left(-\frac{a+1}{2},-\frac{b}{2},-\frac{c+1}{2}\right),
\\
&R_\frac{d-1}{2}\left(-\frac{a+1}{2},-\frac{b}{2}-1,-\frac{c+1}{2}\right).
\end{align*}

\item The factors of any composition series for the $\Re$-module $E_d(a,b,c)^{(-1,-1)}$ are isomorphic to
\begin{align*}
&R_\frac{d-1}{2}\left(-\frac{a+1}{2},-\frac{b+1}{2},-\frac{c}{2}\right),
\\
&R_\frac{d-1}{2}\left(-\frac{a+1}{2},-\frac{b+1}{2},-\frac{c}{2}-1\right).
\end{align*}
\end{enumerate}
\end{thm}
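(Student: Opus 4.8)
The plan is to exploit the fact that, via $\zeta$, the central element $\delta$ of $\Re$ is an affine function of a single generator of $\H$. By Theorem \ref{thm:hom} we have $\zeta(\delta)=\frac{t_0^2+t_1^2+t_2^2+t_3^2}{4}-\frac{t_0}{2}-\frac34$, and by Proposition \ref{prop:E}(ii) the elements $t_0^2,t_1^2,t_2^2,t_3^2$ act on $E_d(a,b,c)$ as the scalars $\frac{(d+1)^2}{4},a^2,b^2,c^2$. Hence on $E_d(a,b,c)$ the operator $t_0$ equals a scalar minus $2\zeta(\delta)$; since $\delta$ is central in $\Re$, the operator $t_0$ commutes with the entire $\Re$-action, so every eigenspace of $t_0$ is an $\Re$-submodule. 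Because $t_0^2$ is the scalar $\frac{(d+1)^2}{4}$, the operator $t_0$ is semisimple with eigenvalues $\pm\frac{d+1}{2}$, and its trace $-(d+1)$ (Lemma \ref{lem:det_E}) forces the two eigenspaces to have dimensions $\frac{d-1}{2}$ and $\frac{d+3}{2}$, precisely the dimensions of $R_{\frac{d-3}{2}}$ and $R_{\frac{d+1}{2}}$. For the twisted modules one runs the identical argument after replacing $t_0$ by $t_1$, $t_2$, or $t_3$ according to Table \ref{Z/4Z-action}: each such generator likewise commutes with the corresponding twisted $\Re$-action, and has eigenvalues $\pm a$, $\pm b$, $\pm c$ of equal multiplicity $\frac{d+1}{2}$ by the vanishing trace in Lemma \ref{lem:det_E}.

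Next I would identify each submodule with a specific $R_{d'}(a',b',c')$. On each piece the scalar by which $\delta$ acts is read off from the formula for $\zeta(\delta)$ using the eigenvalue of the distinguished generator; comparing with Proposition \ref{prop:UAWd}(ii) pins down the sum $a'(a'+1)+b'(b'+1)+c'(c'+1)$, while the individual values, hence $a',b',c'$ up to the symmetry $x\mapsto -x-1$, follow from the spectra of $A,B,C$ computed below. To obtain those spectra I would use $A=\frac{(t_0+t_1)^2-1}{4}$: by Lemma \ref{lem:t0t1_E} the operator $t_0+t_1$ is lower bidiagonal in the basis $\{v_i\}$, so $A$ is lower triangular with prescribed diagonal entries, and restricting to a $t_0$-eigenspace yields eigenvalues matching the $\theta_i$ of Proposition \ref{prop:UAWd}(i) for the claimed $a'=-\frac{a+1}{2}$; the analogous computations for $B,C$ use Lemma \ref{lem:t0t2_E} and Lemma \ref{eigenvalues_C}. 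The sign of the eigenvalue of the distinguished generator (e.g. $t_1=\pm a$ in the $(1,-1)$ case) is exactly what separates the two admissible parameters, accounting for the shift $-\frac{a}{2}$ versus $-\frac{a}{2}-1$ in parts (iii)--(v).

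Having matched dimension, the central character $\delta$, and the spectra of $A,B,C$, I would conclude that each piece is isomorphic to the stated $R_{d'}(a',b',c')$, most cleanly by exhibiting an explicit basis on which $A$ and $B$ take the bidiagonal shapes of Proposition \ref{prop:UAWd}(i); this simultaneously realizes the isomorphism and certifies that the piece is a genuine composition factor rather than a further extension. Irreducibility of each piece then comes from translating the criterion of Theorem \ref{thm:irr_UAW} for the target $R_{d'}(a',b',c')$, through the parameter dictionary $a'=-\frac{a+1}{2}$ and its analogues, into the hypothesis that $E_d(a,b,c)$ is $\H$-irreducible, i.e. the conditions of Theorem \ref{thm:irr_E}. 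The case $d=1$ of (i) falls out because there the $\frac{d-1}{2}$-dimensional eigenspace is zero, leaving a single two-dimensional irreducible factor.

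I expect the genuine difficulty to be twofold. First, the eigenspace argument produces an honest direct sum only when the distinguished generator is semisimple; for $t_0$ this always holds, but for $t_1,t_2,t_3$ it fails precisely when $a$, $b$, or $c$ is $0$ (then, e.g., $t_1^2=0$ yet $t_1\neq 0$), so there the decomposition must be replaced by the generalized-eigenspace filtration and one must argue that the two composition factors, now carrying equal $\delta$-scalars and related by $x\mapsto -x-1$, still appear with the claimed parameters. Second, upgrading the matching of invariants to an actual $\Re$-isomorphism requires the explicit change of basis bringing $A,B$ (and $C$) to the forms of Proposition \ref{prop:UAWd} and Lemma \ref{eigenvalues_C}; this is the tedious linear-algebraic core of the argument, and it is here that the bookkeeping over the four twists and the three parameter slots, with their differing shifts, is concentrated.
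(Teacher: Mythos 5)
First, note that the paper does not prove this statement at all: Theorem \ref{thm:CF_even} is imported verbatim from \S 4.2--\S 4.5 of \cite{Huang:R<BImodules}, so there is no internal argument to compare yours against. Judged on its own, your skeleton is sound and the central idea is a good one: on $E_d(a,b,c)$ the element $\zeta(\delta)$ is an affine function of $t_0$ (since the $t_i^2$ act as scalars), so $t_0$ commutes with the $\Re$-action and its eigenspaces are $\Re$-submodules; the trace and the scalar $t_0^2=\frac{(d+1)^2}{4}$ then force the dimensions $\frac{d+3}{2}$ and $\frac{d-1}{2}$, and the analogous statement for $t_1,t_2,t_3$ handles the twists. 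Your $\delta$-scalar bookkeeping also checks out against Proposition \ref{prop:UAWd}(ii).

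There are, however, two genuine gaps. The first is the degenerate case you flag but do not resolve: in parts (iii)--(v), when $a=0$ (resp.\ $b=0$, $c=0$) the distinguished generator $t_1$ is a nonzero nilpotent (e.g.\ $t_1v_0=v_1\neq 0$ while $t_1^2=0$), and such parameters are not excluded by Theorem \ref{thm:irr_E}. Your proposed fix, the generalized-eigenspace filtration, is vacuous there: the generalized $0$-eigenspace is all of $V$, so it produces no decomposition. One would instead have to work with the chain $\operatorname{im}t_1\subseteq\ker t_1\subseteq V$ of $\Re$-submodules and separately pin down $\dim\ker t_1=\frac{d+1}{2}$ (only the inequality $\dim\ker t_1\geq\frac{d+1}{2}$ is automatic from $t_1^2=0$), and then still identify the two factors, which in this case share the same $A$-spectrum and $\delta$-character. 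The second gap is that the entire parameter identification rests on the deferred explicit-basis computation: matching the dimension, the $\delta$-scalar, and the spectra of $A,B,C$ cannot distinguish $R_{d'}(a',b',c')$ from $R_{d'}(-a'-1,b',c')$ (the eigenvalues $\theta_i=(a'+\frac{d'}{2}-i)(a'+\frac{d'}{2}-i+1)$ are permuted, not changed, under $a'\mapsto -a'-1$), and the paper supplies no isomorphism theorem for the $R_{d'}$ analogous to Theorem \ref{thm:iso_E}. So until the bases realizing the bidiagonal forms of Proposition \ref{prop:UAWd}(i) are actually exhibited on each eigenspace, none of the specific shifts $-\frac{a+1}{2}$ versus $-\frac{a}{2}$ versus $-\frac{a}{2}-1$ is established; this is not a routine afterthought but the substantive content of the theorem.
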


We now give the necessary and sufficient conditions for $A,B,C$ to be multiplicity-free on the composition factors of even-dimensional irreducible $\H$-modules.

\begin{lem}\label{lem:diag_ABC_E}
Suppose that the $\H$-module $E_d(a,b,c)$ is irreducible. Then the following hold:
\begin{enumerate}
\item $A$ 
is multiplicity-free on all composition factors of the $\Re$-module $E_d(a,b,c)$ if and only if $2a$ is not among $d-1,d-3,\ldots,1-d$.

\item $B$
is multiplicity-free on all composition factors of the $\Re$-module $E_d(a,b,c)$ if and only if $2b$ is not among $d-1,d-3,\ldots,1-d$.

\item $C$
is multiplicity-free on all composition factors of the $\Re$-module $E_d(a,b,c)$ if and only if $2c$ is not among $d-1,d-3,\ldots,1-d$.
\end{enumerate}
\end{lem}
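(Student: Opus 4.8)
The plan is to reduce the statement about composition factors of the $\Re$-module $E_d(a,b,c)$ to the already-established multiplicity-free criterion for the standard modules $R_n(a',b',c')$. By Theorem~\ref{thm:CF_even} every composition factor of the $\Re$-module $E_d(a,b,c)$ is isomorphic to a module of the form $R_n\bigl(-\frac{a+1}{2},-\frac{b+1}{2},-\frac{c+1}{2}\bigr)$; crucially the first parameter $a'=-\frac{a+1}{2}$ is the same for every factor, so that $2a'=-(a+1)$ throughout. Since each composition factor is irreducible, Lemma~\ref{lem:dia_UAW} applies to it, telling us that $A$ is multiplicity-free on $R_n(a',b',c')$ if and only if $2a'=-(a+1)$ avoids the set $\{\,i-n-1\mid i=1,2,\ldots,2n-1\,\}$. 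Thus I would first record, for each factor appearing in Theorem~\ref{thm:CF_even}, the precise forbidden set for $2a$ obtained by substituting $2a'=-(a+1)$ and rescaling.

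Next I would split into the cases of Theorem~\ref{thm:CF_even}. When $d=1$ there is a single factor $R_1$, whose forbidden set works out to the singleton $\{0\}$, matching the list $\{d-1,d-3,\ldots,1-d\}=\{0\}$. When $d\geq 3$ there are two factors, $R_{\frac{d+1}{2}}$ and $R_{\frac{d-3}{2}}$ (the latter being $R_0$, which imposes no condition, when $d=3$). A direct computation with $n=\frac{d+1}{2}$ gives $2n-1=d$ and $n+1=\frac{d+3}{2}$, so the forbidden set of the long factor translates, after rescaling $2a'=-(a+1)$, into exactly $2a\in\{d-1,d-3,\ldots,1-d\}$. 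The same computation with $n=\frac{d-3}{2}$ yields the smaller forbidden set $2a\in\{d-5,d-7,\ldots,5-d\}$ for the short factor.

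Since the condition coming from the short factor is a subset of the one coming from the long factor, $A$ is multiplicity-free on all composition factors precisely when it is multiplicity-free on $R_{\frac{d+1}{2}}$, that is, when $2a\notin\{d-1,d-3,\ldots,1-d\}$; this proves~(i). Parts~(ii) and~(iii) follow verbatim, replacing $A$ by $B$ (resp.\ $C$) and using that the corresponding parameter of every factor in Theorem~\ref{thm:CF_even} is $b'=-\frac{b+1}{2}$ (resp.\ $c'=-\frac{c+1}{2}$), so that Lemma~\ref{lem:dia_UAW} produces identical arithmetic with $2b'=-(b+1)$ (resp.\ $2c'=-(c+1)$). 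The only real obstacle is the bookkeeping: one must carry the half-integer shifts of Lemma~\ref{lem:dia_UAW} through the rescaling without sign or off-by-one errors, and verify the containment of forbidden sets that makes the long factor's condition the binding one. Everything else is a routine translation.
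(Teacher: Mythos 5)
Your proposal is correct and follows exactly the route the paper takes: the paper's proof is the one-line ``Immediate from Lemma \ref{lem:dia_UAW} and Theorem \ref{thm:CF_even}(i), (ii),'' and your write-up simply supplies the arithmetic (the long factor $R_{\frac{d+1}{2}}$ giving the binding forbidden set $\{d-1,d-3,\ldots,1-d\}$ and the short factor $R_{\frac{d-3}{2}}$ giving a subset of it), which checks out. No gaps.
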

\begin{proof}
Immediate from Lemma \ref{lem:dia_UAW} and Theorem \ref{thm:CF_even}(i), (ii).
\end{proof}

\begin{lem}\label{lem:diag_ABC_E1}
Suppose that the $\H$-module $E_d(a,b,c)$ is irreducible. Then the following hold:
\begin{enumerate}
\item $A$ 
is multiplicity-free on all composition factors of the $\Re$-module $E_d(a,b,c)^{(1,-1)}$ if and only if 
$2a$ is not among $d-1,d-3,\ldots,1-d$.

\item $B$
is multiplicity-free on all composition factors of the $\Re$-module $E_d(a,b,c)^{(1,-1)}$ if and only if 
$2b$ is not among $d-3,d-5,\ldots,3-d$.

\item $C$ 
is multiplicity-free on all composition factors of the $\Re$-module $E_d(a,b,c)^{(1,-1)}$ if and only if 
$2c$ is not among $d-3,d-5,\ldots,3-d$.
\end{enumerate}
\end{lem}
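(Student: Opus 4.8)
The plan is to read off the two composition factors of the $\Re$-module $E_d(a,b,c)^{(1,-1)}$ from Theorem \ref{thm:CF_even}(iii) and then apply the multiplicity-free criterion of Lemma \ref{lem:dia_UAW} to each of them. By Theorem \ref{thm:CF_even}(iii) both factors are irreducible $\Re$-modules of the form $R_{\frac{d-1}{2}}(a',b',c')$, with $(a',b',c')$ equal to $(-\frac{a}{2},-\frac{b+1}{2},-\frac{c+1}{2})$ for the first factor and $(-\frac{a}{2}-1,-\frac{b+1}{2},-\frac{c+1}{2})$ for the second. Since being multiplicity-free on \emph{all} composition factors means being multiplicity-free on each of these two factors, in every case the desired condition is the conjunction of the two constraints produced by Lemma \ref{lem:dia_UAW}, and the work is to simplify this conjunction into the single stated condition on $2a$, $2b$, or $2c$. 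Throughout I would abbreviate $d'=\frac{d-1}{2}$, so that each factor has dimension $d'+1$ and Lemma \ref{lem:dia_UAW} excludes the block of consecutive integers $\{i-d'-1\mid i=1,2,\ldots,2d'-1\}=\{-d',-d'+1,\ldots,d'-2\}$.

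Parts (ii) and (iii) are the easy ones and I expect them to fall out directly, because the two composition factors share the \emph{same} second parameter $b'=-\frac{b+1}{2}$ (resp. third parameter $c'=-\frac{c+1}{2}$). Hence Lemma \ref{lem:dia_UAW} produces identical conditions on the two factors, and $B$ (resp. $C$) is multiplicity-free on both exactly when $2b'=-(b+1)$ (resp. $2c'=-(c+1)$) avoids the block $\{-d',\ldots,d'-2\}$. Translating this single condition and substituting $d'=\frac{d-1}{2}$ gives precisely $2b\notin\{d-3,d-5,\ldots,3-d\}$ (resp. $2c\notin\{d-3,d-5,\ldots,3-d\}$), which is the assertion.

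Part (i) is where the real content lies, since here the two factors carry \emph{different} first parameters $-\frac{a}{2}$ and $-\frac{a}{2}-1$. Lemma \ref{lem:dia_UAW} then gives two separate constraints: $A$ is multiplicity-free on the first factor iff $-a$ avoids $\{-d',\ldots,d'-2\}$, and on the second iff $-a-2$ avoids the same block, i.e. $-a$ avoids the shifted block $\{-d'+2,\ldots,d'\}$. The crux is to merge these two excluded integer intervals: because the second block is the first shifted by exactly $2$, the two together exhaust the single contiguous interval $\{-d',\ldots,d'\}$. I expect this interlocking step — checking that the two shifted arithmetic progressions tile the range without leaving a gap, so that the union is exactly $\{-d',\ldots,d'\}$ and nothing smaller — to be the main obstacle, and the place where the endpoint bookkeeping must be carried out carefully. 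Once it is established, the combined condition reads $-a\notin\{-d',\ldots,d'\}$; since this interval is symmetric about $0$ and has integer step, rescaling by $2$ and substituting $d'=\frac{d-1}{2}$ recovers $2a\notin\{d-1,d-3,\ldots,1-d\}$, completing the proof.
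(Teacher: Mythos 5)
Your approach is exactly the paper's: the paper's entire proof of this lemma is the one\nobreakdash-line citation ``Immediate from Lemma \ref{lem:dia_UAW} and Theorem \ref{thm:CF_even}(iii)'', and your write-up simply carries that computation out. Parts (ii) and (iii) are correct as you argue them: the two composition factors share the parameter $-\frac{b+1}{2}$ (resp. $-\frac{c+1}{2}$), so Lemma \ref{lem:dia_UAW} imposes the same condition on both factors, and it translates to the stated condition on $2b$ (resp. $2c$).

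The step you yourself flag as the crux of part (i) is where your argument breaks, and only for small $d$. Writing $d'=\frac{d-1}{2}$, the two excluded sets for $a$ are $\{2-d',\dots,d'\}$ and $\{-d',\dots,d'-2\}$; their union equals the contiguous interval $\{-d',\dots,d'\}$ precisely when $2-d'\le d'-1$, i.e.\ $d'\ge 2$, i.e.\ $d\ge 5$. For $d=3$ the union is $\{-1,1\}$, leaving a gap at $0$, and for $d=1$ both sets are empty, so your claimed identity of sets fails and the argument establishes part (i) only for $d\ge 5$. This is not just a bookkeeping lapse in your write-up: for $d=3$ and $a=0$ (compatible with irreducibility of $E_3(a,b,c)$, e.g.\ $E_3(0,10,20)$), the operator $A$ has the distinct eigenvalues $\frac{3}{4}$ and $-\frac{1}{4}$ on each of the factors $R_1(0,\cdot,\cdot)$ and $R_1(-1,\cdot,\cdot)$, hence is multiplicity-free on all composition factors even though $2a=0$ lies in $\{d-1,d-3,\dots,1-d\}$; so the ``only if'' direction of statement (i) as written fails there. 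The paper's bare citation glosses over exactly the same boundary cases, so the defect is inherited from the source rather than introduced by you, but as a proof of the statement as written your argument does not close for $d\in\{1,3\}$, and you should at minimum record the restriction $d\ge 5$ (or handle $d=1,3$ separately and note the discrepancy at $2a=0$).
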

\begin{proof}
Immediate from Lemma \ref{lem:dia_UAW} and Theorem \ref{thm:CF_even}(iii).
\end{proof}

\begin{lem}\label{lem:diag_ABC_E2}
Suppose that the $\H$-module $E_d(a,b,c)$ is irreducible. Then the following hold:
\begin{enumerate}
\item $A$ 
is multiplicity-free on all composition factors of the $\Re$-module $E_d(a,b,c)^{(-1,1)}$ if and only if 
$2a$ is not among $d-3,d-5,\ldots,3-d$.

\item $B$
is multiplicity-free on all composition factors of the $\Re$-module $E_d(a,b,c)^{(-1,1)}$ if and only if 
$2b$ is not among $d-1,d-3,\ldots,1-d$.

\item $C$ 
is multiplicity-free on all composition factors of the $\Re$-module $E_d(a,b,c)^{(-1,1)}$ if and only if 
$2c$ is not among $d-3,d-5,\ldots,3-d$.
\end{enumerate}
\end{lem}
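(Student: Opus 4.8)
The plan is to follow the template of Lemmas~\ref{lem:diag_ABC_E} and~\ref{lem:diag_ABC_E1}, replacing part~(iii) of Theorem~\ref{thm:CF_even} by part~(iv). By Theorem~\ref{thm:CF_even}(iv), every composition factor of the $\Re$-module $E_d(a,b,c)^{(-1,1)}$ is isomorphic to one of the two modules $R_{\frac{d-1}{2}}(-\frac{a+1}{2},-\frac{b}{2},-\frac{c+1}{2})$ and $R_{\frac{d-1}{2}}(-\frac{a+1}{2},-\frac{b}{2}-1,-\frac{c+1}{2})$, both of which are irreducible since composition factors are simple. Hence Lemma~\ref{lem:dia_UAW} applies to each factor and reduces each of (i)--(iii) to an arithmetic condition on the relevant parameter of these two modules.

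First I would record that, for $n=\frac{d-1}{2}$, the exclusion set of Lemma~\ref{lem:dia_UAW}(iii) is $\{i-n-1 \mid i=1,\dots,2n-1\}=\{-n,-n+1,\dots,n-2\}$, the block of $d-2$ consecutive integers running from $\frac{1-d}{2}$ to $\frac{d-5}{2}$. For statements (i) and (iii) both composition factors carry the common first parameter $-\frac{a+1}{2}$ and the common third parameter $-\frac{c+1}{2}$; thus $A$ (resp.\ $C$) is multiplicity-free on both factors precisely when $-(a+1)$ (resp.\ $-(c+1)$) avoids that block, and unwinding the scaling by $2$ and the shift by $1$ turns this into the assertion that $2a$ (resp.\ $2c$) is not among $d-3,d-5,\dots,3-d$.

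For statement (ii) the two factors carry the distinct second parameters $-\frac{b}{2}$ and $-\frac{b}{2}-1$, so I would impose Lemma~\ref{lem:dia_UAW}(iii) on each and then require both, which amounts to taking the union of the two forbidden sets for $b$. The first factor forbids $b$ in $\{\frac{5-d}{2},\dots,\frac{d-1}{2}\}$ and the second, after accounting for the extra $-1$ in its parameter, forbids $b$ in $\{\frac{1-d}{2},\dots,\frac{d-5}{2}\}$; these are two blocks of $d-2$ consecutive integers, the second being the first shifted down by $2$, so they overlap and merge into the single block $\frac{1-d}{2},\dots,\frac{d-1}{2}$, i.e.\ $2b$ not among $d-1,d-3,\dots,1-d$.

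The routine but error-prone part, and the only place demanding care, is the index bookkeeping: reading off the exclusion set of Lemma~\ref{lem:dia_UAW} at $n=\frac{d-1}{2}$, correctly tracking the shift induced by the $-1$ in the second parameter of the second factor, and verifying in (ii) that the two offset blocks dovetail into one contiguous block rather than leaving a gap. No further representation-theoretic input is needed beyond Theorem~\ref{thm:CF_even}(iv) and Lemma~\ref{lem:dia_UAW}.
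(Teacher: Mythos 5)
Your overall route is exactly the paper's: the paper proves this lemma with the one line ``Immediate from Lemma \ref{lem:dia_UAW} and Theorem \ref{thm:CF_even}(iv)'', and your reduction of each part to the condition of Lemma \ref{lem:dia_UAW}(iii) applied to the two modules $R_{\frac{d-1}{2}}$ listed in Theorem \ref{thm:CF_even}(iv) is the intended argument. Parts (i) and (iii) check out: both factors share the first parameter $-\frac{a+1}{2}$ (resp.\ third parameter $-\frac{c+1}{2}$), and your translation to ``$2a$ (resp.\ $2c$) not among $d-3,d-5,\dots,3-d$'' is correct for every odd $d\ge 1$ (for $d=1$ both sides are vacuous).

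The gap is in part (ii), precisely at the step ``they overlap and merge into the single block.'' The two forbidden sets $\left\{\frac{5-d}{2},\dots,\frac{d-1}{2}\right\}$ and $\left\{\frac{1-d}{2},\dots,\frac{d-5}{2}\right\}$ each have $d-2$ elements, and their union equals the full block $\left\{\frac{1-d}{2},\dots,\frac{d-1}{2}\right\}$ only when $d\ge 5$. For $d=3$ they are the singletons $\{1\}$ and $\{-1\}$, so the union omits $0$; for $d=1$ both are empty while the asserted block is $\{0\}$. Hence your argument does not deliver the direction ``multiplicity-free $\Rightarrow$ $2b\notin\{d-1,\dots,1-d\}$'' at $2b=0$ when $d\in\{1,3\}$. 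Moreover this is not a bookkeeping slip that more care would repair: for $d=3$ and $b=0$ the two factors are $R_1(\,\cdot\,,0,\,\cdot\,)$ and $R_1(\,\cdot\,,-1,\,\cdot\,)$, on which the eigenvalues of $B$ are $\frac{3}{4},-\frac{1}{4}$ and $-\frac{1}{4},\frac{3}{4}$ respectively, so $B$ genuinely is multiplicity-free on both composition factors even though $2b=0$ lies in the stated list. So the cases $d=1,3$ cannot be absorbed into the merging argument; they have to be treated separately, and your computation in fact shows the stated biconditional does not come out of Lemma \ref{lem:dia_UAW} and Theorem \ref{thm:CF_even}(iv) there. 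You should flag this boundary case explicitly rather than assert that the two blocks dovetail.
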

\begin{proof}
Immediate from Lemma \ref{lem:dia_UAW} and Theorem \ref{thm:CF_even}(iv).
\end{proof}

\begin{lem}\label{lem:diag_ABC_E3}
Suppose that the $\H$-module $E_d(a,b,c)$ is irreducible. Then the following hold:
\begin{enumerate}
\item $A$ 
is multiplicity-free on all composition factors of the $\Re$-module $E_d(a,b,c)^{(-1,-1)}$ if and only if 
$2a$ is not among $d-3,d-5,\ldots,3-d$.

\item $B$
is multiplicity-free on all composition factors of the $\Re$-module $E_d(a,b,c)^{(-1,-1)}$ if and only if 
$2b$ is not among $d-3,d-5,\ldots,3-d$.

\item $C$ 
is multiplicity-free on all composition factors of the $\Re$-module $E_d(a,b,c)^{(-1,-1)}$ if and only if 
$2c$ is not among $d-1,d-3,\ldots,1-d$.
\end{enumerate}
\end{lem}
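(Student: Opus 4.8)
The plan is to obtain this lemma directly from Theorem \ref{thm:CF_even}(v) and Lemma \ref{lem:dia_UAW}, in complete parallel with the proofs of Lemmas \ref{lem:diag_ABC_E1} and \ref{lem:diag_ABC_E2}. First I would quote Theorem \ref{thm:CF_even}(v) to fix the two irreducible $\Re$-modules occurring as the composition factors of the $\Re$-module $E_d(a,b,c)^{(-1,-1)}$, namely
$$
R_{\frac{d-1}{2}}\!\left(-\tfrac{a+1}{2},-\tfrac{b+1}{2},-\tfrac{c}{2}\right)
\quad\text{and}\quad
R_{\frac{d-1}{2}}\!\left(-\tfrac{a+1}{2},-\tfrac{b+1}{2},-\tfrac{c}{2}-1\right).
$$
Since each factor is irreducible, Lemma \ref{lem:dia_UAW} applies to it verbatim and converts the assertion ``$A$ (resp. $B$) (resp. $C$) is multiplicity-free'' into an explicit arithmetic condition on the corresponding defining parameter of that factor, now with the module dimension parameter equal to $\frac{d-1}{2}$.

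For parts (i) and (ii) the relevant parameter is shared by the two factors: the first parameter equals $-\frac{a+1}{2}$ in both and the second equals $-\frac{b+1}{2}$ in both. Hence Lemma \ref{lem:dia_UAW}(iii), applied to either factor with dimension parameter $\frac{d-1}{2}$ and then simplified, produces the single condition that $2a$ (resp. $2b$) not be among $d-3,d-5,\ldots,3-d$, and multiplicity-freeness on all factors coincides with multiplicity-freeness on one of them; this gives (i) and (ii). Part (iii) is the only case with genuine content, because the third parameter differs between the two factors, being $-\frac{c}{2}$ in the first and $-\frac{c}{2}-1$ in the second. Here I would apply Lemma \ref{lem:dia_UAW}(iii) to each factor separately and then take the conjunction of the two resulting conditions, since $C$ must be multiplicity-free on each factor. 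The unshifted factor forbids $2c$ on one step-two range and the shifted factor forbids it on a second step-two range displaced by $2$, and the claim is that these two ranges together reproduce the single step-two set $d-1,d-3,\ldots,1-d$ of (iii).

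The only real work, and thus the step I expect to be the main obstacle, is this combination in part (iii): one must carefully verify that the shift by $1$ in the third parameter of the second factor supplies exactly the values omitted by the first, so that the conjunction of the two conditions widens the step-two exclusion set seen in (i) and (ii) to the larger one in (iii); particular attention is warranted at the extreme values of $d$, where the two ranges nearly degenerate. Conceptually this asymmetry is transparent from Table \ref{Z/4Z-action}: the $(-1,-1)$-twist interchanges $t_0$ with $t_3$ (and $t_1$ with $t_2$), so it fixes the combination governing $C$, and correspondingly it is the third parameter that gets shifted between the two composition factors in Theorem \ref{thm:CF_even}(v), forcing $C$ to inherit the wider condition—exactly as $A$ and $B$ were singled out under the $(1,-1)$- and $(-1,1)$-twists in Lemmas \ref{lem:diag_ABC_E1} and \ref{lem:diag_ABC_E2}. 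Every remaining step is a direct invocation of the two quoted results, so no further constructions are needed.
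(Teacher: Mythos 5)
Your proposal is correct and follows exactly the paper's (one-line) proof: invoke Theorem \ref{thm:CF_even}(v) to identify the two composition factors and then apply Lemma \ref{lem:dia_UAW} to each, taking the conjunction of the resulting conditions. Your reading of part (iii) is right as well — the two forbidden sets for $2c$ coming from the parameters $-\tfrac{c}{2}$ and $-\tfrac{c}{2}-1$ are step-two progressions whose union is $\{d-1,d-3,\ldots,1-d\}$ (note the two ranges for $2c$ are offset by $4$, not $2$, which is precisely what supplies the extreme values $\pm(d-1)$).
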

\begin{proof}
Immediate from Lemma \ref{lem:dia_UAW} and Theorem \ref{thm:CF_even}(v).
\end{proof}

We are in the position to prove Theorem \ref{thm:diagonal} in the even-dimensional case.

\begin{thm}\label{thm:diagonal_even}
Suppose that $V$ is an even-dimensional irreducible $\H$-module. Then the following are equivalent:
\begin{enumerate}
\item $A$ {\rm (}resp. $B${\rm )} {\rm (}resp. $C${\rm )}
is diagonalizable on $V$.

\item $A$ {\rm (}resp. $B${\rm )} {\rm (}resp. $C${\rm )} is diagonalizable on all composition factors of the $\Re$-module $V$.

\item $A$ {\rm (}resp. $B${\rm )} {\rm (}resp. $C${\rm )} is multiplicity-free on all composition factors of the $\Re$-module $V$.
\end{enumerate}
\end{thm}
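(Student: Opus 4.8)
The plan is to establish the cycle $(i)\Rightarrow(ii)\Leftrightarrow(iii)\Rightarrow(i)$, carrying out the argument for $A$ in detail, the cases of $B$ and $C$ running in complete parallel with the obvious relabelling. The two short implications come first. For $(ii)\Leftrightarrow(iii)$, note that every composition factor of the $\Re$-module $V$ is a finite-dimensional irreducible $\Re$-module, so Lemma~\ref{lem:dia_UAW} applies and shows that on each such factor $A$ is diagonalizable if and only if it is multiplicity-free. For $(i)\Rightarrow(ii)$, I would use the standard fact that a diagonalizable operator on a finite-dimensional space has minimal polynomial with distinct roots; this polynomial annihilates the restriction to any invariant subspace and the induced map on any quotient, so both restriction and quotient are again diagonalizable. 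Since each term of a composition series of the $\Re$-module $V$ is an $\Re$-submodule and hence invariant under $A\in\Re$, diagonalizability of $A$ on $V$ descends to every composition factor.

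The substance is in $(iii)\Rightarrow(i)$. By Theorem~\ref{thm:onto_E} I would write $V\cong E_d(a,b,c)^\e$ with $d\ge1$ odd, $a,b,c\in\F$ and $\e\in\{\pm1\}^2$, the module $E_d(a,b,c)$ being irreducible. The first step is to record how $A$ acts after the twist: $A$ acts through $\zeta(A)=\frac{(t_0+t_1-1)(t_0+t_1+1)}{4}$, and twisting by $\e$ replaces $t_0+t_1$ by its image under the automorphism $u^\e$ of Table~\ref{Z/4Z-action}. Since $u^{(1,-1)}$ fixes $t_0+t_1$ while both $u^{(-1,1)}$ and $u^{(-1,-1)}$ send it to $t_2+t_3$, the operator $A$ acts on $E_d(a,b,c)^\e$ as $\frac{(t_0+t_1-1)(t_0+t_1+1)}{4}$ when $\e\in\{(1,1),(1,-1)\}$ and as $\frac{(t_2+t_3-1)(t_2+t_3+1)}{4}$ when $\e\in\{(-1,1),(-1,-1)\}$. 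In the first family, hypothesis $(iii)$ becomes, via Lemma~\ref{lem:diag_ABC_E}(i) or Lemma~\ref{lem:diag_ABC_E1}(i), the condition that $2a$ is not among $d-1,d-3,\ldots,1-d$; by Lemma~\ref{lem:t1t0_diag} this is exactly when $t_0+t_1$ is diagonalizable on $E_d(a,b,c)$, and a polynomial in a diagonalizable operator is diagonalizable, so $A$ is diagonalizable. In the second family, $(iii)$ becomes, via Lemma~\ref{lem:diag_ABC_E2}(i) or Lemma~\ref{lem:diag_ABC_E3}(i), the weaker condition that $2a$ is not among $d-3,d-5,\ldots,3-d$, and Lemma~\ref{lem:t1t3+t1t3inv_diag}(i) asserts directly that $(t_2+t_3-1)(t_2+t_3+1)$, hence $A$, is diagonalizable. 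Replacing $(t_0+t_1,t_2+t_3)$ by $(t_0+t_2,t_1+t_3)$ and by $(t_0+t_3,t_1+t_2)$, and using Lemmas~\ref{lem:t2t0_diag}, \ref{lem:t3t0_diag} together with parts (ii) and (iii) of Lemmas~\ref{lem:t1t3+t1t3inv_diag} and \ref{lem:diag_ABC_E}--\ref{lem:diag_ABC_E3}, disposes of $B$ and $C$.

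The main obstacle is the second family, where $A$ is a fixed quadratic $p$ in $t_2+t_3$ and one cannot simply cite ``a polynomial in a diagonalizable operator is diagonalizable'': under the weaker hypothesis $2a\notin\{d-3,\ldots,3-d\}$ the inner operator $t_2+t_3$ genuinely need \emph{not} be diagonalizable, as it may still carry a $2\times2$ Jordan block at the two endpoint values $2a=\pm(d-1)$, which lie in the larger forbidden set but not the smaller one. What saves the day is that such a block sits precisely at the eigenvalue $0$ of $t_2+t_3$, the critical point of $p$, where $p$ therefore flattens it to a scalar; this ``repair'' is exactly the content of Lemma~\ref{lem:t1t3+t1t3inv_diag}. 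The one point demanding care is consequently the bookkeeping that, case by case and for each twist $\e$, the arithmetic condition produced by the multiplicity-free Lemmas~\ref{lem:diag_ABC_E}--\ref{lem:diag_ABC_E3} matches on the nose the hypothesis required by the relevant diagonalizability Lemma; the symmetry of the forbidden sets about $0$ makes the replacement of $2a$ by $-2a$ harmless. Once this is confirmed, the three implications close the cycle.
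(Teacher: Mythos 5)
Your proposal is correct and follows essentially the same route as the paper: reduce to $E_d(a,b,c)^\e$ via Theorem~\ref{thm:onto_E}, track how the twist $\e$ turns $A,B,C$ into quadratics in the various $t_i+t_j$, and match the arithmetic conditions from Lemmas~\ref{lem:diag_ABC_E}--\ref{lem:diag_ABC_E3} against the diagonalizability criteria of Lemmas~\ref{lem:t1t0_diag}--\ref{lem:t3t0_diag} and \ref{lem:t1t3+t1t3inv_diag}. Your reorganization by ``family'' rather than by $\e$, and your remark on why the weaker condition suffices when $t_2+t_3$ may carry a Jordan block at its critical value, are accurate glosses on the same argument.
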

\begin{proof}
(i) $\Rightarrow$ (ii): Trivial.

(ii) $\Rightarrow$ (iii): Immediate from Lemma \ref{lem:dia_UAW}.

(iii) $\Rightarrow$ (i): 
Suppose that (iii) holds. 
Let $d=\dim V-1$. 
By Theorem \ref{thm:onto_E} there exist $a,b,c\in \F$ and $\e\in \{\pm 1\}^2$ such that the $\H$-module $E_d(a,b,c)^\e$ is isomorphic to $V$.

Using Lemmas \ref{lem:t1t0_diag}--\ref{lem:t3t0_diag} and \ref{lem:diag_ABC_E} yields that (i) holds for the case $\e=(1,1)$. Note that the actions of $A,B,C$ on $E_d(a,b,c)^{(1,-1)}$ are identical to the actions of 
$$
\frac{(t_0+t_1-1)(t_0+t_1+1)}{4},
\qquad 
\frac{(t_1+t_3-1)(t_1+t_3+1)}{4},
\qquad 
\frac{(t_1+t_2-1)(t_1+t_2+1)}{4}
$$ 
on $E_d(a,b,c)$ respectively. Hence (i) holds for the case $\e=(1,-1)$ by Lemmas  \ref{lem:t1t0_diag}, \ref{lem:t1t3+t1t3inv_diag} and \ref{lem:diag_ABC_E1}. 
The actions of $A,B,C$ on $E_d(a,b,c)^{(-1,1)}$ are identical to the actions of 
$$
\frac{(t_2+t_3-1)(t_2+t_3+1)}{4},
\qquad 
\frac{(t_0+t_2-1)(t_0+t_2+1)}{4},
\qquad 
\frac{(t_1+t_2-1)(t_1+t_2+1)}{4}
$$ 
on $E_d(a,b,c)$ respectively. Hence (i) holds for the case $\e=(-1,1)$ by Lemmas  \ref{lem:t2t0_diag}, \ref{lem:t1t3+t1t3inv_diag} and \ref{lem:diag_ABC_E2}. The actions of $A,B,C$ on $E_d(a,b,c)^{(-1,-1)}$ are identical to the actions of 
$$
\frac{(t_2+t_3-1)(t_2+t_3+1)}{4},
\qquad 
\frac{(t_1+t_3-1)(t_1+t_3+1)}{4},
\qquad 
\frac{(t_0+t_3-1)(t_0+t_3+1)}{4}
$$ 
on $E_d(a,b,c)$ respectively. 
Hence (i) holds for the case $\e=(-1,-1)$ by Lemmas  \ref{lem:t3t0_diag}, \ref{lem:t1t3+t1t3inv_diag} and \ref{lem:diag_ABC_E3}. We have shown that (i) holds for all elements $\e\in \{\pm 1\}^2$. Therefore (i) follows.
\end{proof}

\section{Preliminaries on the odd-dimensional irreducible $\H$-modules}\label{Pre:Hmodule2}

\begin{prop}
[\!\! \cite{Huang:BImodule}]
\label{prop:O}
For any scalars $a,b,c\in \F$ and any even integer $d\geq 0$, there exists a $(d+1)$-dimensional $\H$-module $O_d(a,b,c)$ satisfying the following conditions:
\begin{enumerate}
\item There exists a basis $\{v_i\}_{i=0}^d$ for $O_d(a,b,c)$ such that 
\begin{align*}
t_0 v_i
&=
\left\{
\begin{array}{ll} 
\displaystyle -i(\sigma+i)v_{i-1}+\frac{\sigma+2i}{2}v_i
\qquad
&
\hbox{for $i=2,4,\ldots,d$},
\\
\displaystyle -\frac{\sigma+2i+2}{2}v_i+v_{i+1}
\qquad
&
\hbox{for $i=1,3,\ldots,d-1$},
\end{array}
\right.
\\
t_0 v_0&=
\frac{\sigma}{2}v_0,
\\
t_1 v_i
&=
\left\{
\begin{array}{ll}
\displaystyle i(\sigma+i)v_{i-1}+\frac{\lambda}{2}v_i+v_{i+1}
\qquad
&
\hbox{for $i=2,4,\ldots,d-2$},
\\
\displaystyle -\frac{\lambda}{2} v_i
\qquad
&
\hbox{for $i=1,3,\ldots,d-1$},
\end{array}
\right.
\\
t_1 v_0 &=
\frac{\lambda}{2} v_0+ v_1,
\qquad
t_1 v_d =
d(\sigma+d)v_{d-1}+\frac{\lambda}{2} v_d,
\\
t_2 v_i
&=
\left\{
\begin{array}{ll}
\displaystyle \frac{\nu}{2} v_i
\qquad
&
\hbox{for $i=0,2,\ldots,d$},
\\
\displaystyle (d-i-1)(\tau+i)
v_{i-1}-\frac{\nu}{2} v_i-v_{i+1}
\qquad
&
\hbox{for $i=1,3,\ldots,d-1$},
\end{array}
\right.
\\
t_3 v_i 
&=
\left\{
\begin{array}{ll}
\displaystyle \frac{2d+\mu-2i}{2}v_i-v_{i+1}
\qquad
&
\hbox{for $i=0,2,\ldots,d-2$},
\\
\displaystyle (i-d-1)(\tau+i)v_{i-1}-\frac{2d+\mu-2i+2}{2}v_i
\qquad
&
\hbox{for $i=1,3,\ldots,d-1$},
\end{array}
\right.
\\
t_3 v_d
&=
\frac{\mu}{2}v_d,
\end{align*}
where
\begin{align*}
\sigma &=
a+b+c-\frac{d+1}{2} ,
\qquad 
\tau =
a+b-c-\frac{d+1}{2} ,
\\
\lambda &=
a-b-c-\frac{d+1}{2} ,
\qquad
\mu =
c-a-b-\frac{d+1}{2} ,
\\
\nu &=
b-a-c-\frac{d+1}{2}.
\end{align*}

\item The elements $t_0^{2}, t_1^{2},t_2^{2},t_3^{2}$ act on $O_d(a,b,c)$ as scalar multiplication by
$$
\left(\frac{a+b+c}{2}-\frac{d+1}{4}\right)^{2},
\qquad
\left(\frac{a-b-c}{2}-\frac{d+1}{4}\right)^{2},
$$
$$
\left(\frac{c-a-b}{2}-\frac{d+1}{4}\right)^{2},
\qquad
\left(\frac{b-a-c}{2}-\frac{d+1}{4}\right)^{2},
$$
respectively.
\end{enumerate}
\end{prop}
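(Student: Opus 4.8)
The statement is an explicit construction, so the plan is to exhibit the module directly and then verify the defining relations of $\H$. Concretely, I would let $V$ be the $(d+1)$-dimensional $\F$-vector space with basis $\{v_i\}_{i=0}^d$ and define four linear operators $t_0,t_1,t_2,t_3$ on $V$ by the formulas displayed in part (i) (these are well defined, since they prescribe the image of each basis vector). By Definition \ref{defn:H}, to turn $V$ into an $\H$-module it suffices to check that these operators satisfy $t_0+t_1+t_2+t_3=-\mathrm{id}$ and that each $t_i^2$ commutes with $t_0,t_1,t_2,t_3$. The decisive simplification is to prove part (ii) first: once each $t_i^2$ is shown to be a scalar operator it automatically commutes with everything, so the centrality relations come for free and (i) reduces to the single linear identity.

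For the linear relation I would apply $t_0+t_1+t_2+t_3$ to each basis vector $v_i$ and read off the coefficient of each $v_j$. The contributions landing on $v_{i-1}$ and on $v_{i+1}$ produced by the four operators cancel in pairs, so only the coefficient of $v_i$ survives, and a short computation shows this diagonal coefficient equals $-1$ for every $i$. The arithmetic rests on the identity
\begin{gather*}
\sigma+\lambda+\mu+\nu=-2(d+1),
\end{gather*}
which follows immediately from the definitions of $\sigma,\lambda,\mu,\nu$ (the $a,b,c$ terms cancel and four copies of $-\tfrac{d+1}{2}$ remain), together with the index shifts built into the diagonal entries. The even/odd split in the formulas and the endpoint vectors $v_0$ and $v_d$ must be handled as separate cases, but each is a one-line check.

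For part (ii) I would exploit the band structure of the operators. The operator $t_0$ leaves invariant the line $\F v_0$ and each plane $\mathrm{span}\{v_{2k-1},v_{2k}\}$, on which it is represented by a matrix of the shape $\bigl(\begin{smallmatrix}-p&-q\\1&p\end{smallmatrix}\bigr)$; such a matrix squares to $(p^2-q)\,\mathrm{id}$, and substituting $p=\tfrac{\sigma+4k}{2}$, $q=2k(\sigma+2k)$ gives $p^2-q=\tfrac{\sigma^2}{4}$ uniformly, so $t_0^2=\tfrac{\sigma^2}{4}\,\mathrm{id}$, the value recorded in (ii). The operator $t_3$ is treated identically from the opposite end, using the invariant planes $\mathrm{span}\{v_{2k},v_{2k+1}\}$ and the line $\F v_d$. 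For $t_1$ the odd-indexed vectors are eigenvectors while each even-indexed $v_i$ is sent into $\mathrm{span}\{v_{i-1},v_i,v_{i+1}\}$; applying $t_1$ a second time, the two odd neighbours are merely rescaled by the same eigenvalue and the off-diagonal terms cancel, leaving $t_1^2$ a scalar. The operator $t_2$ works the same way with the roles of the two parities interchanged. In each case the surviving scalar is the corresponding entry of (ii).

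I expect no conceptual obstacle: the result is a direct construction and every step is a finite linear-algebra verification. The part most prone to slips, and the one I would write out most carefully, is the second application of $t_1$ and $t_2$ to the basis vectors together with the endpoints $v_0,v_d$, where one must confirm that all off-diagonal contributions cancel so that only the claimed scalar remains; the clean invariant $2\times 2$ block decomposition available for $t_0$ and $t_3$ makes those two operators essentially automatic by comparison. Once both relations have been verified on all basis vectors, $V$ is the desired $\H$-module $O_d(a,b,c)$ and part (ii) holds by construction.
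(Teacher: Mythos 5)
The paper itself gives no proof of Proposition \ref{prop:O}: the statement is imported verbatim from \cite{Huang:BImodule}. Your plan --- define the four operators on a basis, prove (ii) first so that centrality of each $t_i^{2}$ comes for free, then verify the single linear relation $t_0+t_1+t_2+t_3=-1$ coefficient by coefficient --- is the right and essentially the only self-contained route, and your structural observations are sound where they apply: $t_0$ does preserve $\F v_0$ and each plane $\mathrm{span}\{v_{2k-1},v_{2k}\}$, the trace-zero $2\times 2$ blocks do square to scalars (for $t_0$ one gets $p^{2}-q=\sigma^{2}/4$ exactly as you say; for the $t_3$ blocks one needs $\mu+\tau=-(d+1)$), your identity $\sigma+\lambda+\mu+\nu=-2(d+1)$ is correct and does give diagonal coefficient $-1$ in the sum, and for $t_1,t_2$ the off-diagonal contributions to the square cancel because the neighbouring basis vectors are eigenvectors with the negated diagonal entry.

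However, two of the cancellations you assert without computing actually fail for the formulas as printed, so the verification cannot be completed as written. First, for odd $i$ the coefficient of $v_{i-1}$ in $(t_0+t_1+t_2+t_3)v_i$ is $(d-i-1)(\tau+i)+(i-d-1)(\tau+i)=-2(\tau+i)$, which is not zero in general; the relation $t_0+t_1+t_2+t_3=-1$ holds only if the factor $(d-i-1)$ in the formula for $t_2v_i$ is corrected to $(d-i+1)$. Second, carrying out your own block computation gives $t_2^{2}=\nu^{2}/4=\left(\tfrac{b-a-c}{2}-\tfrac{d+1}{4}\right)^{2}$ and $t_3^{2}=\mu^{2}/4=\left(\tfrac{c-a-b}{2}-\tfrac{d+1}{4}\right)^{2}$, which is the opposite pairing from the one displayed in part (ii): the roles of $\mu$ and $\nu$ are interchanged between (i) and (ii). These are evidently transcription slips in the statement rather than flaws in your method, but a blind verification must detect and repair them, and your write-up instead declares that ``the contributions landing on $v_{i-1}$ \ldots cancel in pairs'' and that ``the surviving scalar is the corresponding entry of (ii).'' You single out the second application of $t_1$ and $t_2$ as the delicate spot, whereas the step that actually breaks is the $v_{i-1}$-cancellation in the linear relation. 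A minor further point: for $d=0$ the displayed formula $t_1v_0=\tfrac{\lambda}{2}v_0+v_1$ involves a nonexistent $v_1$, so the one-dimensional case needs a separate sentence.
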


\begin{thm}
[Theorem 2.8, \cite{Huang:BImodule}]
\label{thm:irr_O}
Let $d\geq 0$ denote an even integer.  Then the following hold:
\begin{enumerate}
\item For any $a, b, c\in \F$ the $\H$-module $O_d(a,b,c)$ is irreducible if and only if 
$$
a+b+c, a-b-c, -a+b-c, -a-b+c\not\in \left\{\frac{d+1}{2}-i \,\bigg|\, \hbox{$i =2,4,\ldots,d$}\right\}.
$$

\item Suppose that $V$ is a $(d+1)$-dimensional irreducible $\H$-module. Then there exist $a,b,c\in \F$ such that the $\H$-module $O_d(a,b,c)$ is isomorphic to $V$.
\end{enumerate}
\end{thm}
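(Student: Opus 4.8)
The plan is to mirror the even-dimensional development (Theorems \ref{thm:irr_E}, \ref{thm:onto_E} and \ref{thm:onto2_E}), working directly from the explicit ladder action of Proposition \ref{prop:O}(i).

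First I would read off, for each generator $t_j$ and each $i$, how $t_j$ moves $v_i$ among $v_{i-1},v_i,v_{i+1}$. Inspecting Proposition \ref{prop:O}(i) shows that the coefficient of $v_{i+1}$ (the ``raising'' coefficient) is always $\pm 1$: from an even index one raises by $t_1$ or $t_3$, from an odd index by $t_0$ or $t_2$. Consequently $v_0$ generates $O_d(a,b,c)$, and the only candidates for proper submodules are the ``top intervals'' $\mathrm{span}\{v_k,\ldots,v_d\}$, cut off where lowering fails. The coefficient of $v_{i-1}$ (the ``lowering'' coefficient) is, for even $i$, a nonzero multiple of $\sigma+i$ (via $t_0,t_1$), and for odd $i$ a multiple of $\tau+i$ (via $t_2,t_3$, at least one of the prefactors $d-i-1$, $i-d-1$ being nonzero on $1\le i\le d-1$). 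Hence $\mathrm{span}\{v_i,\ldots,v_d\}$ is a proper nonzero submodule exactly when $\sigma+i=0$ for some even $i\in\{2,\ldots,d\}$, i.e. $a+b+c=\tfrac{d+1}{2}-i$, or when $\tau+i=0$ for some odd $i$, which after the substitution $i\mapsto d+1-i$ reads $-a-b+c=\tfrac{d+1}{2}-i'$ for even $i'$. These are two of the four conditions, attached to the central scalars $t_0^{2}$ and $t_2^{2}$ of Proposition \ref{prop:O}(ii).

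The remaining two conditions, attached to $t_1^{2}$ and $t_3^{2}$ (the expressions $a-b-c$ and $-a+b-c$), are invisible as vanishing entries in this particular basis, so I would recover them by symmetry. Irreducibility is preserved by the $\{\pm 1\}^{2}$-action of Table \ref{Z/4Z-action}; the twist $O_d(a,b,c)^{(1,-1)}$, on which $t_0,t_1,t_2,t_3$ act as the original $t_1,t_0,t_3,t_2$, has the same central scalars up to permutation and is again of type $O_d$ (identified through its $t_j^{2}$ as in the odd-dimensional analogue of Theorem \ref{thm:onto2_E}). Running the same cut analysis on this twist interchanges the roles of $\sigma$ and $\lambda$, hence yields the conditions on $a-b-c$ and $-a+b-c$. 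Conversely, when all four conditions hold every lowering coefficient is nonzero for $O_d(a,b,c)$ and for each of its twists; since raising is unobstructed, $v_0$ is cyclic, and a standard leading-term argument (separating indices by a suitable diagonalizable element such as the bidiagonal $t_0+t_1$) shows any nonzero submodule contains $v_0$ and therefore equals the whole module. This proves part (1).

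For part (2), each $t_j^{2}$ is central, hence acts on the irreducible $V$ as a scalar; taking square roots in the algebraically closed field $\F$ and combining $t_0+t_1+t_2+t_3=-1$ with the dimension $d+1$ and trace bookkeeping (the odd analogue of Theorem \ref{thm:onto2_E}) determines admissible $a,b,c$. I would then select a starting eigenvector, generate a basis by raising, and verify that the resulting matrices coincide with Proposition \ref{prop:O}(i). The main obstacle is precisely the passage from the two ``visible'' cuts to all four listed conditions: one must confirm that the relevant twist is genuinely again of type $O_d$ and that its lowering coefficients vanish exactly at the claimed $a-b-c$ and $-a+b-c$ values, and, for the converse, justify rigorously that every nonzero submodule is a top interval, i.e. that lowering a nonzero vector reaches $v_0$.
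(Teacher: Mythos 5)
First, a point of comparison: this paper does not actually prove Theorem \ref{thm:irr_O}; it is quoted from Theorem 2.8 of \cite{Huang:BImodule}, so there is no in-paper argument to measure your proposal against, and I can only assess it on its own terms. Your reading of the ladder structure in Proposition \ref{prop:O}(i) is accurate: the raising coefficients are all $\pm 1$, so $v_0$ is cyclic; lowering from an even index $i$ is obstructed exactly when $\sigma+i=0$, i.e.\ $a+b+c=\tfrac{d+1}{2}-i$, and from an odd index exactly when $\tau+i=0$, which under $i\mapsto d+1-i$ becomes the stated condition on $-a-b+c$. Your idea that the remaining two conditions are seen through the $(1,-1)$-twist (whose central invariants match $O_d(a,-b,-c)$, interchanging the roles of $\sigma$ and $\lambda$) is also the right one, modulo the circularity noted below.

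The genuine gap is in the sufficiency direction of part (i). You conclude that when all four conditions hold, ``a standard leading-term argument shows any nonzero submodule contains $v_0$ and therefore equals the whole module.'' The minimal-support-index argument (applying $t_0$ or $t_3$ to drop the lowest index in the support of a vector of $W$) only yields a vector of $W$ whose $v_0$-coordinate is nonzero; it does not yield $v_0$ itself, and a vector with nonzero $v_0$-coordinate need not generate $V$. If that step were valid, your argument would prove irreducibility from the two \emph{visible} conditions alone, contradicting the theorem: for instance with $d=2$, $a-b-c=-\tfrac{1}{2}$ and the other three sums generic, every lowering coefficient in the $\{v_i\}$-basis is nonzero, yet $O_2(a,b,c)$ is reducible and its proper submodule is not a top interval in this basis. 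So the assertion that ``the only candidates for proper submodules are the top intervals'' is false, and the passage from ``nonzero $v_0$-component'' to ``$W=V$'' is precisely where the two invisible conditions must enter; the proposal supplies no mechanism for this. Two further points need repair: the ``suitable diagonalizable element'' $t_0+t_1$ need not be diagonalizable under the four irreducibility hypotheses (by Lemma \ref{lem:t1t0_diag_O} that requires the extra condition $2a\notin\{d-1,d-3,\ldots,1-d\}$); and identifying $O_d(a,b,c)^{(1,-1)}$ with an $O_d(a',b',c')$ via the odd analogue of Theorem \ref{thm:onto2_E}, namely Theorem \ref{thm:onto2_O}, presupposes part (ii), which your outline proves only afterwards --- this is fixable (prove part (ii) first, or construct the intertwiner explicitly), but as written the logic is circular.
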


\begin{lem}
\label{lem:det_O}
Let $a,b,c \in \F$ and let $d\geq 0$ denote an even integer. Then the traces of $t_0+t_1+\frac{1}{2},t_0+t_2+\frac{1}{2},t_0+t_3+\frac{1}{2}$ on $O_d(a,b,c)$ are $a,b,c$ respectively.
\end{lem}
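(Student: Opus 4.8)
The plan is to compute each of the three traces directly from the explicit action recorded in Proposition \ref{prop:O}(i). Since the trace of a linear operator is the sum of the diagonal entries of its matrix with respect to any basis, I would work with the basis $\{v_i\}_{i=0}^d$ and read off, for each of $t_0,t_1,t_2,t_3$, the coefficient of $v_i$ in $t_j v_i$, then sum over $i=0,1,\ldots,d$. Because $\mathrm{tr}(t_0+t_j+\tfrac12)=\mathrm{tr}(t_0)+\mathrm{tr}(t_j)+\tfrac{d+1}{2}$ on the $(d+1)$-dimensional module $O_d(a,b,c)$, it suffices to evaluate the four individual traces $\mathrm{tr}(t_0),\mathrm{tr}(t_1),\mathrm{tr}(t_2),\mathrm{tr}(t_3)$ and then combine them.

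First I would record the diagonal coefficients case by case and exploit the cancellations. For $t_0$ the even index $i=2k$ (with $1\le k\le \tfrac{d}{2}$) contributes $\tfrac{\sigma+2i}{2}=\tfrac{\sigma+4k}{2}$, while the odd index $i=2k-1$ contributes $-\tfrac{\sigma+2i+2}{2}=-\tfrac{\sigma+4k}{2}$; these cancel in pairs, and only the $i=0$ term $\tfrac{\sigma}{2}$ survives, so $\mathrm{tr}(t_0)=\tfrac{\sigma}{2}$. For $t_1$ (resp. $t_2$) every even index contributes $\tfrac{\lambda}{2}$ (resp. $\tfrac{\nu}{2}$) and every odd index contributes $-\tfrac{\lambda}{2}$ (resp. $-\tfrac{\nu}{2}$); since there is exactly one more even index than odd index among $0,1,\ldots,d$, the net values are $\mathrm{tr}(t_1)=\tfrac{\lambda}{2}$ and $\mathrm{tr}(t_2)=\tfrac{\nu}{2}$. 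For $t_3$ the even index $i=2k$ (with $0\le k\le \tfrac{d}{2}-1$) contributes $\tfrac{2d+\mu-4k}{2}$ and cancels against the odd index $i=2k+1$, whose contribution is $-\tfrac{2d+\mu-4k}{2}$, leaving only the $i=d$ term $\tfrac{\mu}{2}$; thus $\mathrm{tr}(t_3)=\tfrac{\mu}{2}$.

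Finally I would substitute the definitions $\sigma=a+b+c-\tfrac{d+1}{2}$, $\lambda=a-b-c-\tfrac{d+1}{2}$, $\nu=b-a-c-\tfrac{d+1}{2}$, and $\mu=c-a-b-\tfrac{d+1}{2}$, which give $\sigma+\lambda=2a-(d+1)$, $\sigma+\nu=2b-(d+1)$, and $\sigma+\mu=2c-(d+1)$. Adding the contribution $\tfrac{d+1}{2}$ from the shift by $\tfrac12$ then yields $\mathrm{tr}(t_0+t_1+\tfrac12)=a$, $\mathrm{tr}(t_0+t_2+\tfrac12)=b$, and $\mathrm{tr}(t_0+t_3+\tfrac12)=c$, as claimed. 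The only point that requires genuine care—and hence the main obstacle—is the bookkeeping of the piecewise formulas: the even- and odd-index cases carry different diagonal coefficients, and the boundary indices $i=0$ and $i=d$ are governed by their own special rules, so one must match up the arithmetic progressions correctly to confirm that everything except a single boundary term cancels. Once the indexing is set up this is entirely mechanical, which is why the statement is a routine verification.
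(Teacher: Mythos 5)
Your proposal is correct and follows exactly the route the paper intends: the published proof simply states ``It is routine to verify the lemma by using Proposition \ref{prop:O}(i),'' and your computation (pairwise cancellation of the diagonal coefficients of $t_0$ and $t_3$ leaving the boundary terms $\tfrac{\sigma}{2}$ and $\tfrac{\mu}{2}$, the surplus even index giving $\tfrac{\lambda}{2}$ and $\tfrac{\nu}{2}$ for $t_1$ and $t_2$, then $\sigma+\lambda=2a-(d+1)$ etc.) is precisely that routine verification carried out in full. No gaps.
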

\begin{proof}
It is routine to verify the lemma by using Proposition \ref{prop:O}(i).
\end{proof}

By means of Lemma \ref{lem:det_O} we develop the following discriminant to determine the scalars $a,b,c\in \F$ in Theorem \ref{thm:irr_O}(ii):

\begin{thm}
\label{thm:onto2_O}
Let $d\geq 0$ denote an even integer. Suppose that $V$ is a $(d+1)$-dimensional irreducible $\H$-module. For any scalars $a,b,c\in \F$ the following are equivalent:
\begin{enumerate}
\item The $\H$-module $O_d(a,b,c)$ is isomorphic to $V$. 

\item The traces of $t_0+t_1+\frac{1}{2},t_0+t_2+\frac{1}{2},t_0+t_3+\frac{1}{2}$ on $V$ are $a,b,c$ respectively.
\end{enumerate}
\end{thm}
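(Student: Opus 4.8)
The plan is to establish the two implications separately, mirroring the structure of the proof of Theorem~\ref{thm:onto2_E} but exploiting the fact that in the odd-dimensional case the relevant traces pin down the parameters $a,b,c$ exactly, rather than merely up to sign as the squares $t_i^2$ do in the even-dimensional case.

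For (i) $\Rightarrow$ (ii): First I would record that the trace of any fixed element of $\H$ acting on a module is an isomorphism invariant. Hence, assuming $O_d(a,b,c)\cong V$, the traces of $t_0+t_1+\frac12$, $t_0+t_2+\frac12$, $t_0+t_3+\frac12$ on $V$ coincide with their traces on $O_d(a,b,c)$, and by Lemma~\ref{lem:det_O} these equal $a,b,c$ respectively. This direction is therefore immediate.

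For (ii) $\Rightarrow$ (i): The key step is to invoke the surjectivity statement Theorem~\ref{thm:irr_O}(ii), which, since $V$ is a $(d+1)$-dimensional irreducible $\H$-module with $d$ even, furnishes scalars $a',b',c'\in\F$ such that $O_d(a',b',c')\cong V$. Applying Lemma~\ref{lem:det_O} to $O_d(a',b',c')$ and using the isomorphism invariance of the trace once more, the traces of $t_0+t_1+\frac12$, $t_0+t_2+\frac12$, $t_0+t_3+\frac12$ on $V$ are exactly $a',b',c'$. Comparing this with the hypothesis~(ii) forces $a=a'$, $b=b'$, and $c=c'$, so that $O_d(a,b,c)=O_d(a',b',c')\cong V$, which is~(i).

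I expect no genuine obstacle here: the whole argument reduces to the observation that the ordered triple of these three traces is a complete isomorphism invariant for the family $\{O_d(a,b,c)\}$. The only point deserving a remark is the contrast with the even-dimensional discriminant of Theorem~\ref{thm:onto2_E}, where the central elements $t_i^2$ determine the parameters only up to sign, so that one must additionally fix $\e\in\{\pm1\}^2$ from the trace of $t_0$ and then reconcile the residual sign ambiguities through the isomorphisms of Theorem~\ref{thm:iso_E}. In the present odd-dimensional setting the linear expressions $t_0+t_i+\frac12$ have traces equal to $a,b,c$ themselves, which removes both the sign ambiguity and the need for the $\{\pm1\}^2$-bookkeeping, leaving a strictly simpler argument.
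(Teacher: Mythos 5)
Your proposal is correct and is exactly the argument the paper intends: the paper's proof reads ``Immediate from Theorem~\ref{thm:irr_O}(ii) and Lemma~\ref{lem:det_O},'' and your two implications are precisely the unpacking of that — isomorphism invariance of traces plus Lemma~\ref{lem:det_O} for (i)~$\Rightarrow$~(ii), and surjectivity from Theorem~\ref{thm:irr_O}(ii) followed by trace comparison for (ii)~$\Rightarrow$~(i). Your closing remark contrasting this with the sign ambiguity in the even-dimensional case is also accurate.
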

\begin{proof}
Immediate from Theorem \ref{thm:irr_O}(ii) and Lemma \ref{lem:det_O}.
\end{proof}

\section{The conditions for $A,B,C$ as diagonalizable on odd-dimensional irreducible $\H$-modules}\label{s:odd}

For convenience we adopt the following notations in this section: Let $d\geq 0$ denote an even integer. Let $a,b,c$ denote any scalars in $\F$. Let $\{v_i\}_{i=0}^d$ denote the basis for $O_d(a,b,c)$ from Proposition \ref{prop:O}(i).

\begin{lem}
\label{lem:t0t1_O}
The action of $t_0+t_1$ on $O_d(a,b,c)$ is as follows:
\begin{align*}
\left(t_0+t_1+(-1)^i\left(\frac{d}{2}-a-i\right)+\frac{1}{2}\right)
v_i
=\left\{
\begin{array}{ll}
v_{i+1} 
\qquad 
&\hbox{for $i=0,1,\ldots,d-1$},
\\
0
\qquad 
&\hbox{for $i=d$}.
\end{array}
\right.
\end{align*}
\end{lem}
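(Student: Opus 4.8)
The plan is to verify the identity directly from the explicit matrix entries listed in Proposition \ref{prop:O}(i), treating the action on each basis vector $v_i$ separately according to the parity of $i$, with $i=0$ and $i=d$ as boundary cases. Writing $\sigma=a+b+c-\frac{d+1}{2}$ and $\lambda=a-b-c-\frac{d+1}{2}$ as in Proposition \ref{prop:O}, I first record the one algebraic fact that organizes the whole computation: $\sigma+\lambda=2a-(d+1)$.

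For even $i$ with $2\leq i\leq d-2$, Proposition \ref{prop:O}(i) gives $t_0v_i=-i(\sigma+i)v_{i-1}+\frac{\sigma+2i}{2}v_i$ and $t_1v_i=i(\sigma+i)v_{i-1}+\frac{\lambda}{2}v_i+v_{i+1}$; upon adding, the $v_{i-1}$ terms cancel exactly, leaving $(t_0+t_1)v_i=\frac{\sigma+\lambda+2i}{2}v_i+v_{i+1}$. Substituting $\sigma+\lambda=2a-(d+1)$ rewrites the diagonal coefficient as $a-\frac{d}{2}+i-\frac{1}{2}=-(\frac{d}{2}-a-i)-\frac{1}{2}$, which is precisely the asserted eigenvalue shift with $(-1)^i=1$. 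The boundary cases $i=0$ and $i=d$ are handled identically using the special formulas for $t_0v_0,t_1v_0$ and $t_0v_d,t_1v_d$; at $i=d$ the absence of a $v_{d+1}$ term yields the claimed $0$ on the right-hand side.

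For odd $i$ with $1\leq i\leq d-1$, there is no $v_{i-1}$ contribution at all, since $t_0v_i=-\frac{\sigma+2i+2}{2}v_i+v_{i+1}$ and $t_1v_i=-\frac{\lambda}{2}v_i$; hence $(t_0+t_1)v_i=-\frac{\sigma+\lambda+2i+2}{2}v_i+v_{i+1}$, and the same substitution produces the diagonal coefficient $(\frac{d}{2}-a-i)-\frac{1}{2}$, matching the claim with $(-1)^i=-1$.

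I do not expect any genuine obstacle: the statement is a finite case check and is the odd-dimensional counterpart of Lemma \ref{lem:t0t1_E}. The only step that requires real attention is the cancellation of the $v_{i-1}$ terms in the even-$i$ cases, which hinges on the coefficients $\mp i(\sigma+i)$ attached to $v_{i-1}$ in $t_0$ and $t_1$ being exact negatives of one another; once this is noted, the remainder is the arithmetic simplification of the diagonal coefficient through $\sigma+\lambda=2a-(d+1)$ together with careful tracking of the parity sign $(-1)^i$.
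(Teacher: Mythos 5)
Your proposal is correct and is exactly the paper's approach: the paper's proof consists of the single line ``It is routine to verify the lemma by using Proposition \ref{prop:O}(i),'' and you have simply carried out that routine verification, correctly identifying the cancellation of the $v_{i-1}$ terms and the identity $\sigma+\lambda=2a-(d+1)$ as the only substantive points. All the parity cases and boundary cases check out as you describe.
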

\begin{proof}
It is routine to verify the lemma by using Proposition \ref{prop:O}(i).
\end{proof}

\begin{lem}\label{lem:t0t2_O}
Suppose that the $\H$-module $O_d(a,b,c)$ is irreducible. Then the following hold:
\begin{enumerate}
\item There exists a basis $\{w_i\}_{i=0}^d$ for $O_d(a,b,c)$ such that 
\begin{align*}
\left(t_0+t_2+(-1)^i\left(\frac{d}{2}-b-i\right)+\frac{1}{2}\right)
w_i
=\left\{
\begin{array}{ll}
w_{i+1} 
\qquad 
&\hbox{for $i=0,1,\ldots,d-1$},
\\
0
\qquad 
&\hbox{for $i=d$}.
\end{array}
\right.
\end{align*}

\item There exists a basis $\{w_i\}_{i=0}^d$ for $O_d(a,b,c)$ such that 
\begin{align*}
\left(t_0+t_3+(-1)^i\left(\frac{d}{2}-c-i\right)+\frac{1}{2}\right)
w_i
=\left\{
\begin{array}{ll}
w_{i+1} 
\qquad 
&\hbox{for $i=0,1,\ldots,d-1$},
\\
0
\qquad 
&\hbox{for $i=d$}.
\end{array}
\right.
\end{align*}
\end{enumerate}
\end{lem}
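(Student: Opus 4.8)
The plan is to mimic the proof of Lemma \ref{lem:t0t2_E} in the even-dimensional setting, replacing the classification data for even-dimensional modules by its odd-dimensional counterpart. The essential idea is that both $t_0+t_2$ and $t_0+t_3$ can be turned into $t_0+t_1$ by a suitable algebra automorphism of $\H$ that permutes $t_1,t_2,t_3$ while fixing $t_0$; once this is arranged, Lemma \ref{lem:t0t1_O} transports directly to give the asserted lowering action.

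For part (i), I would introduce the algebra automorphism $\rho:\H\to\H$ determined by $(t_0,t_1,t_2,t_3)\mapsto(t_0,t_2,t_3,t_1)$, which is well defined by Definition \ref{defn:H} since it preserves the relation $t_0+t_1+t_2+t_3=-1$ and sends each $t_i^2$ to another $t_j^2$. Writing $V=O_d(a,b,c)$, I would consider the twisted module $V^\rho$. Since $V$ is irreducible and $\rho$ is an automorphism, $V^\rho$ is again a $(d+1)$-dimensional irreducible $\H$-module, so Theorem \ref{thm:onto2_O} applies. To identify $V^\rho$ I would compute the traces of $t_0+t_1+\frac12,\,t_0+t_2+\frac12,\,t_0+t_3+\frac12$ on $V^\rho$: by the definition of the twist these equal the traces of $t_0+t_2+\frac12,\,t_0+t_3+\frac12,\,t_0+t_1+\frac12$ on $V$, which by Lemma \ref{lem:det_O} are $b,c,a$. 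Theorem \ref{thm:onto2_O} then yields $V^\rho\cong O_d(b,c,a)$.

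Applying Lemma \ref{lem:t0t1_O} to the irreducible module $O_d(b,c,a)$ (whose first parameter $b$ plays the role of $a$ there) produces a basis $\{w_i\}_{i=0}^d$ of $V^\rho$ on which $t_0+t_1$ acts as the stated lowering operator with $b$ in place of $a$. Finally, because the action of $t_0+t_1$ on $V^\rho$ is by definition the action of $\rho(t_0+t_1)=t_0+t_2$ on $V$, this same basis, viewed in $V$, is exactly the basis asserted in (i). Part (ii) is identical in structure, now using the automorphism $(t_0,t_1,t_2,t_3)\mapsto(t_0,t_3,t_2,t_1)$, whose twist gives $V^\rho\cong O_d(c,b,a)$ and carries $t_0+t_1$ to $t_0+t_3$ with $c$ in place of $a$.

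I do not anticipate a genuine obstacle: each step rests on a result already established in the excerpt, and the only care needed is bookkeeping the parameter permutation induced by the automorphism and verifying that the twist of an irreducible module is irreducible, so that the classification Theorem \ref{thm:onto2_O} is legitimately applicable.
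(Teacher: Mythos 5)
Your proposal is correct and follows essentially the same route as the paper's proof: the same automorphisms $(t_0,t_1,t_2,t_3)\mapsto(t_0,t_2,t_3,t_1)$ and $(t_0,t_1,t_2,t_3)\mapsto(t_0,t_3,t_2,t_1)$, the same trace computation via Lemma \ref{lem:det_O} identifying $V^\rho$ with $O_d(b,c,a)$ and $O_d(c,b,a)$ respectively through Theorem \ref{thm:onto2_O}, and the same transport of the basis from Lemma \ref{lem:t0t1_O}. Your explicit remark that the twist of an irreducible module is irreducible is a point the paper leaves implicit, but it is not a departure in method.
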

\begin{proof}
Let $V$ denote the $\H$-module $O_d(a,b,c)$.

(i): By Definition \ref{defn:H} there exists a unique algebra automorphism $\rho:\H\to \H$ given by 
\begin{eqnarray*}
(t_0,t_1,t_2,t_3) 
&\mapsto &
(t_0,t_2,t_3,t_1)
\end{eqnarray*}
whose inverse sends 
$(t_0,t_1,t_2,t_3)$ to
$(t_0,t_3,t_1,t_2)$. By Lemma \ref{lem:det_O} the traces of $t_0+t_1+\frac{1}{2},t_0+t_2+\frac{1}{2},t_0+t_3+\frac{1}{2}$ act on $V^\rho$ are $b,c,a$ 
respectively.
Therefore the $\H$-module $V^\rho$ is isomorphic to 
$
O_d(b,c,a) 
$
by Theorem \ref{thm:onto2_O}. 
It follows from Lemma \ref{lem:t0t1_O} that there exists a basis $\{w_i\}_{i=0}^d$ for $V^\rho$ such that 
\begin{align*}
\left(t_0+t_1+(-1)^i\left(\frac{d}{2}-b-i\right)+\frac{1}{2}\right)
w_i
=\left\{
\begin{array}{ll}
w_{i+1} 
\qquad 
&\hbox{for $i=0,1,\ldots,d-1$},
\\
0
\qquad 
&\hbox{for $i=d$}.
\end{array}
\right.
\end{align*}
Observe that the action of $t_0+t_1$ on $V^\rho$ is identical to the action of $t_0+t_2$ on $V$. Hence (i) follows.

(ii): By Definition \ref{defn:H} there exists a unique algebra automorphism $\rho:\H\to \H$ given by 
\begin{eqnarray*}
(t_0,t_1,t_2,t_3) 
&\mapsto &
(t_0,t_3,t_2,t_1)
\end{eqnarray*}
whose inverse sends 
$(t_0,t_1,t_2,t_3)$ to
$(t_0,t_3,t_2,t_1)$. By Lemma \ref{lem:det_O} the traces of $t_0+t_1+\frac{1}{2},t_0+t_2+\frac{1}{2},t_0+t_3+\frac{1}{2}$ on $V^\rho$ are $c,b,a$ respectively. Therefore the $\H$-module $V^\rho$ is isomorphic to 
$
O_d(c,b,a)
$
by Theorem \ref{thm:onto2_O}. 
It follows from Lemma \ref{lem:t0t1_O} that there exists a basis $\{w_i\}_{i=0}^d$ for $V^\rho$ such that 
\begin{align*}
\left(t_0+t_1+(-1)^i\left(\frac{d}{2}-c-i\right)+\frac{1}{2}\right)
w_i
=\left\{
\begin{array}{ll}
w_{i+1} 
\qquad 
&\hbox{for $i=0,1,\ldots,d-1$},
\\
0
\qquad 
&\hbox{for $i=d$}.
\end{array}
\right.
\end{align*}
Observe that the action of $t_0+t_1$ on $V^\rho$ is identical to the action of $t_0+t_3$ on $V$. Hence (ii) follows.
\end{proof}

\begin{lem}\label{lem:t1t0_diag_O}
Suppose that the $\H$-module $O_d(a,b,c)$ is irreducible. Then the following are equivalent:
\begin{enumerate}
\item $t_0+t_1$ is diagonalizable on $O_d(a,b,c)$.

\item $t_0+t_1$ is multiplicity-free on $O_d(a,b,c)$.

\item $2a$ is not among $d-1,d-3,\ldots,1-d$.
\end{enumerate}
\end{lem}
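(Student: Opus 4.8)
The plan is to mirror exactly the structure of Lemma \ref{lem:t1t0_diag} (the even-dimensional analogue), since Lemma \ref{lem:t0t1_O} provides for $O_d(a,b,c)$ precisely the same lower-triangular ``shift'' action of $t_0+t_1$ that Lemma \ref{lem:t0t1_E} provided for $E_d(a,b,c)$. First I would read off from Lemma \ref{lem:t0t1_O} that the characteristic polynomial of $t_0+t_1$ on $O_d(a,b,c)$ has as its roots the scalars
\begin{align*}
(-1)^{i-1}\left(\frac{d}{2}-a-i\right)-\frac{1}{2}
\qquad\hbox{for $i=0,1,\ldots,d$},
\end{align*}
obtained by solving for the eigenvalue in the displayed relation. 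These are the diagonal entries of the matrix representing $t_0+t_1$ in the basis $\{v_i\}_{i=0}^d$.

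For the equivalence (ii) $\Leftrightarrow$ (iii), I would argue that because $\F$ has characteristic zero, the two interlacing subfamilies indexed by even $i$ and by odd $i$ are each internally multiplicity-free, so the only possible coincidences occur between an even-index root and an odd-index root. Setting such a pair equal and simplifying shows that a coincidence happens if and only if $2a$ lands in the set $\{d-1,d-3,\ldots,1-d\}$; hence all $d+1$ roots are mutually distinct exactly when (iii) holds, which is the same as saying $t_0+t_1$ is multiplicity-free. The implications (ii), (iii) $\Rightarrow$ (i) are trivial since a multiplicity-free operator is automatically diagonalizable.

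The substantive direction is (i) $\Rightarrow$ (ii), (iii), and here is where I would need an analogue of Lemma \ref{lem:t0+t1_vanish}. Lemma \ref{lem:t0t1_O} shows that the full product $\prod_{i=0}^d\bigl(t_0+t_1+(-1)^i(\frac{d}{2}-a-i)+\frac{1}{2}\bigr)$ annihilates $v_0$, while the same telescoping computation as in Lemma \ref{lem:t0+t1_vanish} shows that deleting any single factor yields a nonzero vector (the deleted-factor product sends $v_0$ to $v_d$ plus lower terms, hence is nonzero). Consequently the $(t_0+t_1)$-annihilator of the cyclic vector $v_0$ has degree exactly $d+1$ and equals the characteristic polynomial, so the minimal polynomial of $t_0+t_1$ coincides with its characteristic polynomial; a diagonalizable operator whose minimal and characteristic polynomials coincide must be multiplicity-free, giving (i) $\Rightarrow$ (ii), and then (iii) follows from the already-established equivalence. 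Since all the needed ingredients for $O_d(a,b,c)$ are formally identical to the $E_d(a,b,c)$ case, the cleanest writeup simply states that the lemma follows by the argument of Lemma \ref{lem:t1t0_diag}, with Lemma \ref{lem:t0t1_O} in place of Lemma \ref{lem:t0t1_E}.

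The main obstacle I anticipate is purely bookkeeping: I must confirm the annihilator-of-$v_0$ argument still produces a nonzero result after deleting an arbitrary factor, which requires the telescoping in Lemma \ref{lem:t0+t1_vanish} to carry over verbatim. Because the action in Lemma \ref{lem:t0t1_O} is the identical raising-by-$v_{i+1}$ shift (with the same sign pattern) as in the even case, I expect no genuine difficulty, and the parity-splitting count that pins down $2a\in\{d-1,\ldots,1-d\}$ is the only place demanding care.
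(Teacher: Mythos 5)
Your proposal is correct and follows exactly the route the paper takes: the paper's proof of this lemma is the single line ``Similar to the proof for Lemma \ref{lem:t1t0_diag},'' and you have faithfully reproduced that argument with Lemma \ref{lem:t0t1_O} in place of Lemma \ref{lem:t0t1_E}, including the needed analogue of Lemma \ref{lem:t0+t1_vanish} and the parity-splitting computation that identifies the coincidence set as $\{d-1,d-3,\ldots,1-d\}$. No gaps.
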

\begin{proof} 
Similar to the proof for Lemma \ref{lem:t1t0_diag}.
\end{proof}

\begin{lem}\label{lem:t2t0_diag_O}
Suppose that the $\H$-module $O_d(a,b,c)$ is irreducible. Then the following are equivalent:
\begin{enumerate}
\item $t_0+t_2$ is diagonalizable on $O_d(a,b,c)$.

\item $t_0+t_2$ is multiplicity-free on $O_d(a,b,c)$.

\item $2b$ is not among $d-1,d-3,\ldots,1-d$.
\end{enumerate}
\end{lem}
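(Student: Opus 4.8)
The plan is to mirror the proof of Lemma \ref{lem:t1t0_diag} (equivalently its odd-dimensional counterpart Lemma \ref{lem:t1t0_diag_O}), replacing the explicit basis from Proposition \ref{prop:O}(i) by the basis $\{w_i\}_{i=0}^d$ supplied by Lemma \ref{lem:t0t2_O}(i). Relative to $\{w_i\}$ that lemma exhibits $t_0+t_2$ in a lower-triangular ``shift'' form, so that the roots of its characteristic polynomial on $O_d(a,b,c)$ are exactly the diagonal entries $\mu_i=(-1)^{i-1}\left(\frac{d}{2}-b-i\right)-\frac{1}{2}$ for $i=0,1,\ldots,d$. First I would observe that the equivalence (ii) $\Leftrightarrow$ (iii) is then purely a question of when these $d+1$ scalars are mutually distinct, which I analyze directly.

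For (ii) $\Leftrightarrow$ (iii) I would separate the $\mu_i$ according to the parity of $i$. Since $\F$ has characteristic zero, the values with $i$ even form a strictly monotone, hence distinct, family, and likewise for $i$ odd; so any coincidence must occur across parities. A short computation shows that $\mu_i=\mu_j$ with $i$ even and $j$ odd is equivalent to $2b=d-i-j$, and as $(i,j)$ ranges over the admissible pairs the quantity $d-i-j$ sweeps out precisely the list $d-1,d-3,\ldots,1-d$. Hence the $\mu_i$ are mutually distinct, i.e.\ $t_0+t_2$ is multiplicity-free, if and only if (iii) holds. The implications (ii),(iii) $\Rightarrow$ (i) are immediate.

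It remains to prove (i) $\Rightarrow$ (ii),(iii). Here I would first establish the analogue of Lemma \ref{lem:t0+t1_vanish} for $t_0+t_2$ and the vector $w_0$: omitting any single factor from $\prod_{i=0}^d\bigl(t_0+t_2+(-1)^i(\frac{d}{2}-b-i)+\frac{1}{2}\bigr)$ sends $w_0$ to $w_d$ plus a linear combination of $w_0,\ldots,w_{d-1}$, hence to a nonzero vector, by telescoping through the shift relation of Lemma \ref{lem:t0t2_O}(i). Since the full product annihilates $w_0$ while no proper sub-product does, the $(t_0+t_2)$-annihilator of $w_0$ equals the degree-$(d+1)$ characteristic polynomial; thus $w_0$ is a cyclic vector and $t_0+t_2$ is non-derogatory, its minimal and characteristic polynomials coinciding. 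If $t_0+t_2$ is diagonalizable its minimal polynomial has simple roots, which forces all $\mu_i$ to be distinct and yields (iii), hence (ii). The only steps beyond routine bookkeeping are the index count identifying $\{d-i-j\}$ with $\{d-1,\ldots,1-d\}$ and the one-factor-omitted nonvanishing; both follow directly from the triangular shift structure of Lemma \ref{lem:t0t2_O}(i), so I anticipate no serious obstacle beyond faithfully transcribing the argument of Lemma \ref{lem:t1t0_diag}.
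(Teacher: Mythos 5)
Your proposal is correct and follows essentially the same route as the paper, which proves this lemma by invoking the basis of Lemma \ref{lem:t0t2_O}(i) and repeating verbatim the argument of Lemma \ref{lem:t1t0_diag} (eigenvalue list from the triangular shift form, parity analysis for (ii)$\Leftrightarrow$(iii), and the one-factor-omitted nonvanishing at $w_0$ to get (i)$\Rightarrow$(ii)). Your index count $2b=d-i-j$ and the cyclic-vector/non-derogatory conclusion are exactly the intended details.
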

\begin{proof}
In view of Lemma \ref{lem:t0t2_O}(i) the lemma follows by an argument similar to the proof for Lemma \ref{lem:t1t0_diag_O}.
\end{proof}

\begin{lem}\label{lem:t3t0_diag_O}
Suppose that the $\H$-module $O_d(a,b,c)$ is irreducible. Then the following are equivalent:
\begin{enumerate}
\item $t_0+t_3$ is diagonalizable on $O_d(a,b,c)$.

\item $t_0+t_3$ is multiplicity-free on $O_d(a,b,c)$.

\item $2c$ is not among $d-1,d-3,\ldots,1-d$.
\end{enumerate}
\end{lem}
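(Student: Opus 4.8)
The plan is to follow the proof of Lemma~\ref{lem:t1t0_diag_O} line by line, with the roles of $t_0+t_1$ and $a$ now played by $t_0+t_3$ and $c$; the only substitution is that the underlying basis comes from Lemma~\ref{lem:t0t2_O}(ii) instead of from Lemma~\ref{lem:t0t1_O}. Concretely, Lemma~\ref{lem:t0t2_O}(ii) supplies a basis $\{w_i\}_{i=0}^d$ of $O_d(a,b,c)$ on which $t_0+t_3$ obeys the same shift-plus-diagonal rule that $t_0+t_1$ obeys on $\{v_i\}_{i=0}^d$. Hence, with respect to $\{w_i\}_{i=0}^d$, the matrix of $t_0+t_3$ is lower triangular and its characteristic polynomial has the roots
$$
(-1)^{i-1}\left(\frac{d}{2}-c-i\right)-\frac{1}{2}
\qquad
\hbox{for $i=0,1,\ldots,d$.}
$$

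First I would prove (ii)$\Leftrightarrow$(iii). Since $\F$ has characteristic zero, the roots above with $i$ even are mutually distinct and those with $i$ odd are mutually distinct, so the only possible coincidences are between an even index and an odd index; equating such a pair and simplifying shows that a coincidence occurs if and only if $2c$ lies in $\{d-1,d-3,\ldots,1-d\}$. Thus all $d+1$ roots are distinct exactly when (iii) holds, which is precisely the condition that $t_0+t_3$ be multiplicity-free. The implication (ii),(iii)$\Rightarrow$(i) is immediate, because an operator on a $(d+1)$-dimensional space with $d+1$ distinct eigenvalues is diagonalizable.

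The substantive direction is (i)$\Rightarrow$(ii),(iii), and here the shift structure does the work, exactly as Lemma~\ref{lem:t0+t1_vanish} is used in the $t_0+t_1$ case. Writing $\theta_i$ for the $i$-th listed root and $L_i$ for the commuting factor $t_0+t_3-\theta_i$, the rule of Lemma~\ref{lem:t0t2_O}(ii) gives $L_i w_i=w_{i+1}$ for $i<d$ and $L_d w_d=0$, so the product $\prod_{i=0}^d L_i$ annihilates $w_0$; moreover, by the odd-dimensional analogue of Lemma~\ref{lem:t0+t1_vanish}, deleting any single factor carries $w_0$ to a vector with nonzero $w_d$-component, hence to a nonzero vector. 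Therefore the $(t_0+t_3)$-annihilator of $w_0$ equals the full characteristic polynomial, so the minimal polynomial of $t_0+t_3$ coincides with its characteristic polynomial; if $t_0+t_3$ is diagonalizable then its minimal polynomial has distinct roots, forcing all $d+1$ eigenvalues to be distinct and yielding (ii) and (iii).

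I do not anticipate a genuine obstacle. The only input beyond the $t_0+t_1$ argument is the existence of the basis in Lemma~\ref{lem:t0t2_O}(ii), obtained through the algebra automorphism of $\H$ exchanging $t_1$ and $t_3$; granted that, every step transfers unchanged, and the cleanest writeup may simply cite Lemma~\ref{lem:t0t2_O}(ii) and refer to the proof of Lemma~\ref{lem:t1t0_diag_O}.
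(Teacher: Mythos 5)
Your proposal is correct and follows essentially the same route as the paper: the paper's proof of this lemma is literally ``In view of Lemma \ref{lem:t0t2_O}(ii) the lemma follows by an argument similar to the proof for Lemma \ref{lem:t1t0_diag_O}'', which in turn refers back to the proof of Lemma \ref{lem:t1t0_diag}, i.e.\ exactly the triangular-matrix eigenvalue computation, the even/odd index distinctness analysis, and the annihilator-of-$w_0$ argument (the odd-dimensional analogue of Lemma \ref{lem:t0+t1_vanish}) that you describe. No discrepancies to report.
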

\begin{proof}
In view of Lemma \ref{lem:t0t2_O}(ii) the lemma follows by an argument similar to the proof for Lemma \ref{lem:t1t0_diag_O}.
\end{proof}

\begin{lem}\label{lem:ABC_diag_O}
Suppose that the $\H$-module $O_d(a,b,c)$ is irreducible. Then the following hold:
\begin{enumerate}
\item If $2a$ is not among $d-1,d-3,\ldots,3-d$ then $A$ is diagonalizable on $O_d(a,b,c)$.

\item If $2b$ is not among $d-1,d-3,\ldots,3-d$ then $B$ is diagonalizable on $O_d(a,b,c)$.

\item If $2c$ is not among $d-1,d-3,\ldots,3-d$ then $C$ is diagonalizable on $O_d(a,b,c)$.
\end{enumerate}
\end{lem}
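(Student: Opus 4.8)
The plan is to prove (i) for $A$; parts (ii) and (iii) then follow by the verbatim argument, reading Lemma \ref{lem:t0t2_O}(i),(ii) in place of Lemma \ref{lem:t0t1_O} and replacing $a$ by $b$ and by $c$. Throughout I would view $A$, via Theorem \ref{thm:hom}, as the operator $\frac{(t_0+t_1-1)(t_0+t_1+1)}{4}=\frac14\bigl((t_0+t_1)^2-1\bigr)$ on $O_d(a,b,c)$. By Lemma \ref{lem:t0t1_O} the matrix representing $t_0+t_1$ with respect to $\{v_i\}_{i=0}^d$ is lower bidiagonal, with unit subdiagonal and diagonal entries $\lambda_i=-(-1)^i(\frac{d}{2}-a-i)-\frac12$; hence the matrix $[A]$ representing $A$ is lower triangular with diagonal entries $\frac14(\lambda_i^2-1)$, unit-scaled entries $\frac14(\lambda_i+\lambda_{i+1})$ on the first subdiagonal, and constant entries $\frac14$ on the second subdiagonal. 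The first elementary fact I would record is the identity $\lambda_{2j-1}=-\lambda_{2j}$ for $1\le j\le d/2$, so that, writing $\theta_j=\frac14(\lambda_{2j}^2-1)$, the diagonal of $[A]$ reads $\theta_0,\theta_1,\theta_1,\theta_2,\theta_2,\ldots,\theta_{d/2},\theta_{d/2}$.

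Next I would pin down the algebraic multiplicities. A short computation shows that $\theta_i=\theta_j$ with $i\neq j$ forces $\lambda_{2i}=-\lambda_{2j}$, equivalently $2a=(d+1)-2(i+j)$, and as $i+j$ ranges over $\{1,\dots,d-1\}$ this puts $2a$ in $\{d-1,d-3,\ldots,3-d\}$. Thus under the hypothesis the scalars $\theta_0,\ldots,\theta_{d/2}$ are mutually distinct, so on the diagonal of $[A]$ the value $\theta_0$ occurs once and each $\theta_j$ with $1\le j\le d/2$ occurs with multiplicity two. It remains to show that for every eigenvalue the geometric multiplicity equals the algebraic multiplicity.

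The eigenvalue $\theta_0$ is free, its algebraic multiplicity being one. For the bottom eigenvalue $\theta_{d/2}$, sitting in the last two diagonal slots, I would exploit the triangular shape directly: columns $d-1$ and $d$ of $[A]-\theta_{d/2}I$ are both zero, because the only off-diagonal entry that could intervene is the subdiagonal entry $\frac14(\lambda_{d-1}+\lambda_d)$, which vanishes by the pairing $\lambda_{d-1}=-\lambda_d$; hence the $\theta_{d/2}$-eigenspace is at least two-dimensional, so exactly two-dimensional. For an interior eigenvalue $\theta_j$ with $1\le j\le d/2-1$ I would instead mimic Lemma \ref{lem:t1t3+t1t3inv_diag}: choosing eigenvectors $u,w$ of $t_0+t_1$ for the eigenvalues $\lambda_{2j}$ and $\lambda_{2j-1}=-\lambda_{2j}$, each of $u,w$ is a $\theta_j$-eigenvector of $A=\frac14((t_0+t_1)^2-1)$, and they are linearly independent as soon as $\lambda_{2j}\neq\lambda_{2j-1}$, i.e. $\lambda_{2j}\neq0$. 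The hypothesis on $2a$ guarantees $\lambda_{2j}\neq0$ for every interior $j$, so each such eigenspace is again two-dimensional. Summing the geometric multiplicities yields $d+1=\dim O_d(a,b,c)$, so $A$ is diagonalizable.

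The delicate point — and the reason the excluded set is the shorter list $\{d-1,\ldots,3-d\}$ rather than $\{d-1,\ldots,1-d\}$ — is the boundary value $2a=1-d$. There $\lambda_{d-1}=\lambda_d=0$, the bottom $2\times2$ block of $[t_0+t_1]$ is a nilpotent Jordan block, and so $t_0+t_1$ is \emph{not} diagonalizable (consistent with Lemma \ref{lem:t1t0_diag_O}); in particular one cannot deduce diagonalizability of $A$ from that of $t_0+t_1$. This is precisely where the interior ``$\pm$-eigenvector'' device fails for $j=d/2$, which is why I separate off the bottom eigenvalue and use the columns-are-zero argument, valid whether or not $\lambda_d=0$. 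I expect the main obstacle to be the careful bookkeeping that confines every surviving degeneracy to this single controllable block: one must verify that the hypothesis rules out exactly the coincidences $\lambda_{2i}=-\lambda_{2j}$ among interior indices and the vanishings $\lambda_{2j}=0$ for $1\le j\le d/2-1$, while deliberately permitting the one degeneracy $\lambda_{d-1}=\lambda_d=0$ that the triangular argument can still absorb.
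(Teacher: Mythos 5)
Your proposal is correct and follows essentially the same route as the paper's proof: the lower‑triangular form of $[A]$ with diagonal $\theta_0,\theta_1,\theta_1,\ldots,\theta_{d/2},\theta_{d/2}$, the zero‑columns argument for the bottom eigenvalue, and the pair of $\pm\vartheta_j$‑eigenvectors of $t_0+t_1$ for the interior eigenvalues. Your extra remark isolating the permitted degeneracy at $2a=1-d$ is a nice explanatory touch but does not change the argument.
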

\begin{proof}
(i): Given any element $X$ of $\H$, let $[X]$ denote the matrix representing $X$ with respect to $\{v_i\}_{i=0}^d$. 
Using Lemma \ref{lem:t0t1_O} a direct calculation yields that $[A]$ is a lower triangular matrix of the form
\begin{gather}\label{[A]_O}
\begin{pmatrix}
\theta_0 & & & & & & &{\bf 0}
\\
 &\theta_1
\\
  & &\theta_1
\\
  &  & &\theta_2
\\
& & & &\theta_2
\\
 &  & & & &\ddots
\\
&  & & & & &\theta_{\frac{d}{2}}
\\  
* & & & & & &0 &\theta_{\frac{d}{2}}
\end{pmatrix}
\end{gather}
where 
$$
\theta_i= \left(\frac{a}{2}-\frac{d+3}{4}+i\right)\left(\frac{a}{2}-\frac{d+3}{4}+i+1\right)
\qquad 
\hbox{for $i=0,1,\ldots,\displaystyle\frac{d}{2}$}.
$$

Since $2a$ is not among $d-1,d-3,\ldots,3-d$, the scalars $\{\theta_i\}_{i=0}^{\frac{d}{2}}$ are mutually distinct. Hence the $\theta_0$-eigenspace of $A$ in $O_d(a,b,c)$ has dimension one and the $\theta_i$-eigenspace of $A$ in $O_d(a,b,c)$ has dimension less than or equal to two for all $i=1,2,\ldots,\frac{d}{2}$. By (\ref{[A]_O}) the last two columns of $[A-\theta_{\frac{d}{2}}]$ are zero. Hence the $\theta_\frac{d}{2}$-eigenspace of $A$ in $O_d(a,b,c)$ has dimension two.
To see the diagonalizability of $A$ it remains to show that the $\theta_i$-eigenspace of $A$ in $O_d(a,b,c)$ has dimension two for all $i=1,2,\ldots,\frac{d}{2}-1$.

By Lemma \ref{lem:t0t1_O} the matrix $[t_0+t_1]$ is a lower triangular matrix of the form 
\begin{gather*}
\begin{pmatrix}
\vartheta_0 & & & & & & &{\bf 0}
\\
 &-\vartheta_1
\\
  & &\vartheta_1
\\
  &  & &-\vartheta_2
  \\
&  &  & &\vartheta_2
\\
&  & & & &\ddots
\\
&  & & & & &-\vartheta_{\frac{d}{2}}
\\  
* & & & & & & &\vartheta_{\frac{d}{2}}
\end{pmatrix}
\end{gather*}
where 
$$
\vartheta_i=a-\frac{d+1}{2}+2i
\qquad 
\hbox{for $i=0,1,\ldots,\displaystyle\frac{d}{2}$}.
$$
Let $i \in\{1,2,..,\frac{d}{2}-1\}$ be given. Let $u$ and $w$ denote the eigenvectors of $t_0+t_1$ in $O_d(a,b,c)$ corresponding to the eigenvalues $\vartheta_i$ and $-\vartheta_{i}$, respectively. Since $2a$ is not among $d-3,d-7,\ldots,5-d$, the scalars $\vartheta_i$ and $-\vartheta_{i}$ are distinct. It follows that $u$ and $w$ are linearly independent. Observe that $u$ and $w$ are the $\theta_i$-eigenvectors of $A$. Hence the $\theta_i$-eigenspace of $A$ in $O_d(a,b,c)$ has dimension. The statement (i) follows.

(ii), (iii): Using Lemma \ref{lem:t0t2_O}(i), (ii) the statements (ii), (iii) follow by the arguments similar to the proof for (i).
\end{proof}

Recall the finite-dimensional irreducible $\Re$-modules from \S\ref{s:AWmodule} and the odd-dimensional irreducible $\H$-modules from \S\ref{Pre:Hmodule2}.

\begin{thm}
[\S 4.6, \cite{Huang:R<BImodules}]
\label{thm:CF_odd}
Suppose that the $\H$-module $O_d(a,b,c)$ is irreducible. Then the following hold:
\begin{enumerate}
\item If $d=0$ then the $\Re$-module $O_d(a,b,c)$ is irreducible and it is isomorphic to 
\begin{align*}
R_0\left(-\frac{a}{2}-\frac{1}{4},-\frac{b}{2}-\frac{1}{4},-\frac{c}{2}-\frac{1}{4}\right).
\end{align*}

\item If $d\geq 2$ and $a+b+c=\frac{d+1}{2}$ then the factors of any composition series for the $\Re$-module $O_d(a,b,c)$ are isomorphic to 
\begin{align*}
&R_{\frac{d}{2}-1}\left(-\frac{a}{2}-\frac{3}{4},-\frac{b}{2}-\frac{3}{4},-\frac{c}{2}-\frac{3}{4}\right),
\\
&R_0\left(-\frac{b+c+1}{2},-\frac{a+c+1}{2},-\frac{a+b+1}{2}\right).
\end{align*}

\item If $d\geq 2$ and $a+b+c\not=\frac{d+1}{2}$ then the factors of any composition series for the $\Re$-module $O_d(a,b,c)$ are isomorphic to
\begin{align*}
&R_{\frac{d}{2}}\left(-\frac{a}{2}-\frac{1}{4},-\frac{b}{2}-\frac{1}{4},-\frac{c}{2}-\frac{1}{4}\right),
\\
&R_{\frac{d}{2}-1}\left(-\frac{a}{2}-\frac{3}{4},-\frac{b}{2}-\frac{3}{4},-\frac{c}{2}-\frac{3}{4}\right).
\end{align*}
\end{enumerate}
\end{thm}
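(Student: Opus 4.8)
The plan is to study $O_d(a,b,c)$ as an $\Re$-module through the central element $\delta = A+B+C$ of $\Re$. Under $\zeta$ (Theorem~\ref{thm:hom}) we have $\delta \mapsto \frac{t_0^2+t_1^2+t_2^2+t_3^2}{4} - \frac{t_0}{2} - \frac{3}{4}$, and by Proposition~\ref{prop:O}(ii) each $t_i^2$ acts on $O_d(a,b,c)$ as a scalar, with $t_0^2$ acting as $\frac{\sigma^2}{4}$ for $\sigma = a+b+c-\frac{d+1}{2}$. Hence on $O_d(a,b,c)$ the operator $\delta$ equals a fixed scalar minus $\frac{t_0}{2}$; equivalently $t_0$ acts as a scalar minus $2\delta$, hence as a central operator for the $\Re$-action. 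Consequently every subspace cut out by $\delta$ (its eigenspaces, as well as $\ker t_0$ and $\operatorname{im} t_0$) is automatically an $\Re$-submodule, and the whole analysis hinges on the single quadratic relation $t_0^2 = \frac{\sigma^2}{4}$. The argument then divides according to whether $\sigma = 0$.

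For $d=0$ the module is one-dimensional and hence $\Re$-irreducible; reading off the scalars by which $A,B,C$ act and matching them with Proposition~\ref{prop:UAWd} identifies it as the stated $R_0$, giving part~(i). For $d \geq 2$ with $\sigma \neq 0$ (part~(iii)), the minimal polynomial of $t_0$ divides $(x-\frac{\sigma}{2})(x+\frac{\sigma}{2})$, which has distinct roots, so $t_0$ is diagonalizable with eigenvalues $\pm\frac{\sigma}{2}$ and $\delta$ takes two distinct scalar values. The two $\delta$-eigenspaces $W_+,W_-$ are $\Re$-submodules and $O_d(a,b,c)=W_+\oplus W_-$. I would identify each summand as an irreducible module $R_n$ from Proposition~\ref{prop:UAWd} by three matching steps: its dimension is the multiplicity of the corresponding $t_0$-eigenvalue; the scalar by which $\delta$ acts fixes the value in Proposition~\ref{prop:UAWd}(ii); and the eigenvalues of $A$ on the summand, read off from the explicit action of $t_0+t_1$ in Lemma~\ref{lem:t0t1_O} (cf. the matrix $[A]$ computed in the proof of Lemma~\ref{lem:ABC_diag_O}), match the prescribed $\theta_i$ after using the symmetry $x(x+1)=(-1-x)(-x)$ to recognize the parameter shift. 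Irreducibility of the resulting $R_{d/2}$ and $R_{d/2-1}$ then follows from Theorem~\ref{thm:irr_UAW}.

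The case $d \geq 2$ with $\sigma=0$ (part~(ii)) is genuinely different: now $t_0^2$ acts as $0$, so $\delta$ acts as a scalar plus the nonzero nilpotent $-\frac{t_0}{2}$, and the module is not semisimple. Since $t_0$ still acts centrally for $\Re$, both $\operatorname{im} t_0$ and $\ker t_0$ are $\Re$-submodules, and $t_0^2=0$ forces $\operatorname{im} t_0 \subseteq \ker t_0$, yielding a filtration $0 \subseteq \operatorname{im} t_0 \subseteq \ker t_0 \subseteq O_d(a,b,c)$ by $\Re$-submodules. Computing the rank of $t_0$ to be $\frac{d}{2}$ (Jordan type: $\frac{d}{2}$ blocks of size two and one of size one), the three factors $\operatorname{im} t_0$, $\ker t_0/\operatorname{im} t_0$, and $O_d(a,b,c)/\ker t_0$ have dimensions $\frac{d}{2},1,\frac{d}{2}$. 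I would identify the two outer factors (isomorphic via $t_0$) as $R_{d/2-1}$ and the middle one as $R_0$ by the same eigenvalue-and-central-scalar matching, which accounts for the two isomorphism types listed, with $R_{d/2-1}$ occurring twice; structurally this is the degeneration of part~(iii) in which the larger summand of dimension $\frac{d}{2}+1$ breaks apart.

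The main obstacle is precisely this non-semisimple case $\sigma=0$. When $t_0$ is diagonalizable the central element $\delta$ does all the work, separating the constituents as a direct sum; when $t_0$ is nilpotent there is no central character to split by, and one must read off the submodule lattice and the multiplicities of the composition factors directly from the filtration by $\operatorname{im} t_0$ and $\ker t_0$. A secondary difficulty, already present in part~(iii), is the bookkeeping that decides which eigenvalues $\theta_i$ of $A$ land in $W_+$ and which in $W_-$: this means correlating the $t_0$-eigenvalue with the $(t_0+t_1)$-eigenvalue on each joint eigenvector and invoking the quadratic symmetry to distinguish the parameter shifts $-\frac{a}{2}-\frac{1}{4}$ and $-\frac{a}{2}-\frac{3}{4}$ that appear in parts~(iii) and~(ii) respectively.
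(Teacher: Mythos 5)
First, note that the paper contains no proof of Theorem~\ref{thm:CF_odd} to compare against: it is imported wholesale from \S 4.6 of the cited reference, so your argument has to stand on its own. Its skeleton is sound and, I think, the right way to organize the proof. Because each $t_i^2$ is scalar on $O_d(a,b,c)$, the image of $\delta$ under $\zeta$ shows that $t_0$ differs from $-2\delta$ by a scalar and hence commutes with the $\Re$-action, so its eigenspaces, kernel and image are $\Re$-submodules; the dichotomy $\sigma\neq 0$ versus $\sigma=0$ is exactly the dichotomy $a+b+c\neq\frac{d+1}{2}$ versus $a+b+c=\frac{d+1}{2}$; and your dimension counts are correct (the trace of $t_0$ is $\frac{\sigma}{2}$, forcing eigenspace dimensions $\frac{d}{2}+1$ and $\frac{d}{2}$ when $\sigma\neq0$, and $t_0$ has rank $\frac{d}{2}$ when $\sigma=0$). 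The central characters you propose to match do check out against Proposition~\ref{prop:UAWd}(ii).

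The genuine gap is the identification step. Matching a factor's dimension, the scalar by which $\delta$ acts, and the spectrum of $A$ does not determine an $\Re$-module up to isomorphism, and in particular does not show the factor is irreducible---which you need both for your filtration to be a composition series and to have any right to invoke the classification (Theorem~\ref{thm:onto_UAW}). Your own case (ii) furnishes the counterexample: there the quotient $O_d(a,b,c)/\operatorname{im}t_0$ has dimension $\frac{d}{2}+1$ and carries a single $\delta$-scalar, yet it is a nonsplit extension rather than a copy of $R_{d/2}(\cdot,\cdot,\cdot)$; so nothing about ``same dimension, same central character, compatible $A$-spectrum'' can by itself certify that $W_+$ in case (iii) is irreducible, let alone isomorphic to the named module. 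To close this you must either (a) build on each factor an explicit basis realizing the bidiagonal matrices of Proposition~\ref{prop:UAWd}(i) (i.e.\ exhibit an actual $\Re$-homomorphism to or from $R_n(a',b',c')$) and then verify via Theorem~\ref{thm:irr_UAW} that the target parameters satisfy the irreducibility condition---which does follow from Theorem~\ref{thm:irr_O}(i) together with, in case (iii), the hypothesis $a+b+c\neq\frac{d+1}{2}$---or (b) prove directly that each factor is irreducible and only then pin down $(a',b',c')$ from the spectra of $A$ and $B$ and the $\delta$-scalar, modulo the substitutions $x\mapsto -x-1$. Either route is the bulk of the actual work, and it is precisely what the cited reference carries out; your ``matching'' shortcut skips it. The same vagueness affects the bookkeeping that distinguishes the parameter shifts $-\frac{a}{2}-\frac{1}{4}$ from $-\frac{a}{2}-\frac{3}{4}$ on the two factors.
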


We now give the necessary and sufficient conditions for $A,B,C$ to be multiplicity-free on the composition factors of odd-dimensional irreducible $\H$-modules.

\begin{lem}\label{lem:diag_ABC_O_k02=1}
Suppose that the $\H$-module $O_d(a,b,c)$ is irreducible. If $a+b+c=\frac{d+1}{2}$ then the following hold:
\begin{enumerate}
\item $A$ 
is multiplicity-free on all composition factors of the $\Re$-module $O_d(a,b,c)$ if and only if 
$2a$ is not among $d-5,d-7,\ldots,3-d$.

\item $B$
is multiplicity-free on all composition factors of the $\Re$-module $O_d(a,b,c)$ if and only if 
$2b$ is not among $d-5,d-7,\ldots,3-d$.

\item $C$ 
is multiplicity-free on all composition factors of the $\Re$-module $O_d(a,b,c)$ if and only if 
$2c$ is not among $d-5,d-7,\ldots,3-d$.
\end{enumerate}
\end{lem}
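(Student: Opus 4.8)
The plan is to reduce the assertion about composition factors to the intrinsic multiplicity-freeness criterion of Lemma~\ref{lem:dia_UAW}, using the explicit list of composition factors supplied by Theorem~\ref{thm:CF_odd}(ii). I will treat the claim for $A$; the claims for $B$ and $C$ are obtained verbatim after permuting the three parameters, since these enter the composition factors symmetrically.

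First I would dispose of the small cases $d=0$ and $d=2$, in which the prescribed set $\{d-5,d-7,\ldots,3-d\}$ is empty, so the stated equivalence merely asserts that $A$ is always multiplicity-free. When $d=0$ the module $O_0(a,b,c)$ is one-dimensional, and when $d=2$ both composition factors named in Theorem~\ref{thm:CF_odd}(ii) are one-dimensional (the first being $R_0$ since $\frac{d}{2}-1=0$); in either case $A$ acts as a scalar on every composition factor, and the equivalence holds vacuously.

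For $d\geq 4$ I would invoke Theorem~\ref{thm:CF_odd}(ii): under the hypothesis $a+b+c=\frac{d+1}{2}$ the composition factors of the $\Re$-module $O_d(a,b,c)$ are isomorphic to $R_{\frac{d}{2}-1}\!\left(-\frac{a}{2}-\frac{3}{4},-\frac{b}{2}-\frac{3}{4},-\frac{c}{2}-\frac{3}{4}\right)$ and $R_0\!\left(-\frac{b+c+1}{2},-\frac{a+c+1}{2},-\frac{a+b+1}{2}\right)$. On the one-dimensional factor $R_0$ the operator $A$ is automatically multiplicity-free, so it imposes no condition; hence $A$ is multiplicity-free on all composition factors if and only if it is multiplicity-free on the first factor. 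Being a composition factor, that module is irreducible, so Lemma~\ref{lem:dia_UAW} applies with $e=\frac{d}{2}-1$ and first parameter $a'=-\frac{a}{2}-\frac{3}{4}$: there $A$ is multiplicity-free exactly when $2a'=-a-\frac{3}{2}$ lies outside $\{\,i-e-1\mid i=1,\ldots,2e-1\,\}=\{1-\frac{d}{2},\ldots,\frac{d}{2}-3\}$.

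The final step is purely arithmetic: substituting through $2a=-2(2a')-3$ carries the progression $\{1-\frac{d}{2},\ldots,\frac{d}{2}-3\}$ for $2a'$ onto the progression $\{d-5,d-7,\ldots,3-d\}$ for $2a$, since the endpoints $1-\frac{d}{2}$ and $\frac{d}{2}-3$ map to $d-5$ and $3-d$ while the unit step in $2a'$ becomes a step of $-2$ in $2a$. This reproduces precisely the criterion in the statement. The only genuine point of care, and the closest thing to an obstacle, is keeping the two arithmetic progressions of excluded indices in exact correspondence, together with confirming that the degenerate factor $R_0$ and the vacuous small cases $d=0,2$ really contribute no constraint.
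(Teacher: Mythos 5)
Your proposal is correct and follows exactly the route the paper takes: the paper's proof is simply ``Immediate from Lemma~\ref{lem:dia_UAW} and Theorem~\ref{thm:CF_odd}(i), (ii)'', and you have supplied the same reduction with the details (the vacuous cases $d=0,2$, the trivial $R_0$ factor, and the substitution carrying $\{1-\frac{d}{2},\ldots,\frac{d}{2}-3\}$ for $2a'$ onto $\{d-5,d-7,\ldots,3-d\}$ for $2a$) worked out correctly.
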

\begin{proof}
Immediate from Lemma \ref{lem:dia_UAW} and Theorem \ref{thm:CF_odd}(i), (ii).
\end{proof}

\begin{lem}\label{lem:diag_ABC_O_k02neq1}
Suppose that the $\H$-module $O_d(a,b,c)$ is irreducible. If $a+b+c\not=\frac{d+1}{2}$ then the following hold:
\begin{enumerate}
\item $A$ 
is multiplicity-free on all composition factors of the $\Re$-module $O_d(a,b,c)$ if and only if 
$2a$ is not among $d-1,d-3,\ldots,3-d$.

\item $B$
is multiplicity-free on all composition factors of the $\Re$-module $O_d(a,b,c)$ if and only if 
$2b$ is not among $d-1,d-3,\ldots,3-d$.

\item $C$ 
is multiplicity-free on all composition factors of the $\Re$-module $O_d(a,b,c)$ if and only if 
$2c$ is not among $d-1,d-3,\ldots,3-d$.
\end{enumerate}
\end{lem}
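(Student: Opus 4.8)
The plan is to read off the multiplicity-free condition on each composition factor directly from the diagonalizability criterion of Lemma~\ref{lem:dia_UAW}, applied to the explicit factors furnished by Theorem~\ref{thm:CF_odd}. Since the equivalence (ii)~$\Leftrightarrow$~(iii) of Lemma~\ref{lem:dia_UAW} identifies ``$A$ is multiplicity-free on $R_e(a',b',c')$'' with the purely numerical condition ``$2a'\notin\{i-e-1\mid i=1,2,\ldots,2e-1\}$'', the whole argument reduces to substituting the parameters of the factors and simplifying. By the symmetry among $A,B,C$ it suffices to treat part (i); parts (ii), (iii) follow verbatim with $b,c$ in the role of $a$. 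First I would dispose of the degenerate case $d=0$: here Theorem~\ref{thm:CF_odd}(i) identifies the module as the one-dimensional $R_0$, so $A$ is automatically multiplicity-free, while the set $\{d-1,d-3,\ldots,3-d\}$ is empty, and the asserted equivalence holds vacuously.

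For $d\geq 2$ with $a+b+c\neq\frac{d+1}{2}$, Theorem~\ref{thm:CF_odd}(iii) says the composition factors of the $\Re$-module $O_d(a,b,c)$ are
$$
R_{\frac{d}{2}}\!\left(-\tfrac{a}{2}-\tfrac14,-\tfrac{b}{2}-\tfrac14,-\tfrac{c}{2}-\tfrac14\right),
\qquad
R_{\frac{d}{2}-1}\!\left(-\tfrac{a}{2}-\tfrac34,-\tfrac{b}{2}-\tfrac34,-\tfrac{c}{2}-\tfrac34\right).
$$
I would apply Lemma~\ref{lem:dia_UAW} to each. For the first factor $e=\frac d2$ and the relevant parameter is $-\frac a2-\frac14$, so $A$ is multiplicity-free on it exactly when $2(-\frac a2-\frac14)=-a-\frac12$ avoids $\{i-\frac d2-1\mid i=1,\ldots,d-1\}=\{-\frac d2,\ldots,\frac d2-2\}$; translating this integer interval back through the substitution turns the condition into $2a\notin\{d-1,d-3,\ldots,3-d\}$. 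For the second factor $e=\frac d2-1$ and parameter $-\frac a2-\frac34$, the same computation with $\{i-\frac d2\mid i=1,\ldots,d-3\}=\{1-\frac d2,\ldots,\frac d2-3\}$ yields $2a\notin\{d-5,d-7,\ldots,3-d\}$.

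The final step is to observe that the second forbidden set is contained in the first: both consist of integers $\equiv d-1 \pmod 2$, and the second is obtained from the first merely by deleting the two largest elements $d-1$ and $d-3$. Hence requiring $A$ to be multiplicity-free on \emph{both} factors is equivalent to the single condition $2a\notin\{d-1,d-3,\ldots,3-d\}$, which is exactly (i). The only genuine work here — and the main place one can slip — is the index bookkeeping: carrying the set $\{i-e-1\}$ of Lemma~\ref{lem:dia_UAW} through the affine substitution $a'=-\frac a2-\frac14$ (resp.\ $-\frac a2-\frac34$) together with the halving $e=\frac d2$ (resp.\ $\frac d2-1$), and then verifying the containment of the two resulting sets so that they collapse into one. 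None of this is deep, but the parity conventions and the endpoint arithmetic must be tracked with care.
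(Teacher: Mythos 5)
Your proposal is correct and follows exactly the route of the paper, whose proof is the one-line "Immediate from Lemma~\ref{lem:dia_UAW} and Theorem~\ref{thm:CF_odd}(i), (iii)"; you have simply written out the parameter substitutions $2a'=-a-\tfrac12$, $-a-\tfrac32$ for the two factors and the containment of the resulting forbidden sets, all of which check out. No gaps.
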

\begin{proof}
Immediate from Lemma \ref{lem:dia_UAW} and Theorem \ref{thm:CF_odd}(i), (iii).
\end{proof}

We are in the position to prove Theorem \ref{thm:diagonal} in odd-dimensional case.

\begin{thm}\label{thm:diagonal_odd}
Suppose that $V$ is an odd-dimensional irreducible $\H$-module. Then the following are equivalent:
\begin{enumerate}
\item $A$ {\rm (}resp. $B${\rm )} {\rm (}resp. $C${\rm )} is diagonalizable on $V$.

\item $A$ {\rm (}resp. $B${\rm )} {\rm (}resp. $C${\rm )} is diagonalizable on all composition factors of the $\Re$-module $V$.

\item $A$ {\rm (}resp. $B${\rm )} {\rm (}resp. $C${\rm )} is multiplicity-free on all composition factors of the $\Re$-module $V$.
\end{enumerate}
\end{thm}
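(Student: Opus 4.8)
The plan is to mirror the proof of Theorem \ref{thm:diagonal_even}, replacing each even-dimensional ingredient by its odd-dimensional counterpart. Throughout I treat the operator $A$; the cases of $B$ and $C$ are entirely analogous, obtained by feeding the cyclic symmetry among $t_1,t_2,t_3$ recorded in Lemma \ref{lem:t0t2_O} (which interchanges the roles of $a,b,c$) into parts (ii), (iii) of Lemmas \ref{lem:ABC_diag_O}, \ref{lem:diag_ABC_O_k02=1}, \ref{lem:diag_ABC_O_k02neq1} in place of part (i).

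The implications (i) $\Rightarrow$ (ii) and (ii) $\Rightarrow$ (iii) carry no odd-specific content. For (i) $\Rightarrow$ (ii), if $A$ is diagonalizable on $V$ then its minimal polynomial on $V$ is a product of distinct linear factors; the minimal polynomial of $A$ on any subquotient divides it and is therefore squarefree, so $A$ is diagonalizable on each composition factor. For (ii) $\Rightarrow$ (iii), every composition factor is a finite-dimensional irreducible $\Re$-module, on which diagonalizability and multiplicity-freeness of $A$ coincide by Lemma \ref{lem:dia_UAW}.

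The substantive implication is (iii) $\Rightarrow$ (i). First I would apply Theorem \ref{thm:irr_O}(ii) to fix scalars $a,b,c\in\F$ with $V\cong O_d(a,b,c)$, where $d=\dim V-1$ is even; in particular $O_d(a,b,c)$ is irreducible. I then split on whether $a+b+c=\frac{d+1}{2}$. In the generic case $a+b+c\neq\frac{d+1}{2}$ the argument is clean: Lemma \ref{lem:diag_ABC_O_k02neq1}(i) turns hypothesis (iii) for $A$ into the statement that $2a$ avoids $\{d-1,d-3,\ldots,3-d\}$, which is exactly the hypothesis of Lemma \ref{lem:ABC_diag_O}(i), and the latter yields diagonalizability of $A$ on $V$, i.e. (i). In the special case $a+b+c=\frac{d+1}{2}$, however, Lemma \ref{lem:diag_ABC_O_k02=1}(i) only produces the weaker conclusion that $2a$ avoids the smaller set $\{d-5,d-7,\ldots,3-d\}$, which is not enough to invoke Lemma \ref{lem:ABC_diag_O}(i) directly.

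I expect this boundary discrepancy to be the main obstacle: in the case $a+b+c=\frac{d+1}{2}$ I must independently rule out the two extra values $2a=d-1$ and $2a=d-3$. The plan is to extract these from irreducibility via Theorem \ref{thm:irr_O}(i). Writing $a-b-c=2a-\frac{d+1}{2}$, one sees that $2a=d-1$ forces $a-b-c=\frac{d+1}{2}-2$ and $2a=d-3$ forces $a-b-c=\frac{d+1}{2}-4$; each of these lies in the forbidden set $\{\frac{d+1}{2}-i\mid i=2,4,\ldots,d\}$ (the second only for $d\geq 4$, but for $d=2$ the value $2a=d-3$ already falls outside the gap, so Lemma \ref{lem:ABC_diag_O}(i) applies there directly). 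Thus irreducibility of $O_d(a,b,c)$ already forbids $2a\in\{d-1,d-3\}$, so under hypothesis (iii) the condition upgrades from $2a\notin\{d-5,\ldots,3-d\}$ to $2a\notin\{d-1,\ldots,3-d\}$, and Lemma \ref{lem:ABC_diag_O}(i) again delivers (i). The analogous computations for $B$ and $C$ use $-a+b-c=2b-\frac{d+1}{2}$ and $-a-b+c=2c-\frac{d+1}{2}$, the other two discriminant quantities of Theorem \ref{thm:irr_O}(i). Apart from this boundary bookkeeping, the argument is a transcription of the even-dimensional proof.
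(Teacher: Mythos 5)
Your proposal is correct and follows essentially the same route as the paper: reduce to $O_d(a,b,c)$ via Theorem \ref{thm:irr_O}(ii), split on whether $a+b+c=\frac{d+1}{2}$, and combine Lemmas \ref{lem:ABC_diag_O}, \ref{lem:diag_ABC_O_k02=1}, \ref{lem:diag_ABC_O_k02neq1}. Your boundary analysis in the case $a+b+c=\frac{d+1}{2}$ (using Theorem \ref{thm:irr_O}(i) to exclude $2a\in\{d-1,d-3\}$ via $a-b-c=2a-\frac{d+1}{2}$) is precisely the paper's observation that irreducibility forces $2a,2b,2c\notin\{1,3,\ldots,d-1\}$, just spelled out in more detail.
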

\begin{proof}
(i) $\Rightarrow$ (ii): Trivial.

(ii) $\Rightarrow$ (iii): Immediate from Lemma \ref{lem:dia_UAW}.

(iii) $\Rightarrow$ (i): 
Suppose that (iii) holds. 
Let $d=\dim V-1$. 
By Theorem \ref{thm:irr_O}(ii) there exist $a,b,c \in \F$ such that $\H$-module $O_d(a,b,c)$ is isomorphic to $V$.

Suppose that $a+b+c=\frac{d+1}{2}$. Since the $\H$-module $O_d(a,b,c)$ is irreducible it follows from Theorem \ref{thm:irr_O}(i) that none of $2a$, $2b$, $2c$ is among $1,3,\ldots,d-1$. Combined with Lemmas \ref{lem:ABC_diag_O} and \ref{lem:diag_ABC_O_k02=1} this yields that (i) holds. Suppose that $a+b+c\not=\frac{d+1}{2}$. By Lemmas \ref{lem:ABC_diag_O} and \ref{lem:diag_ABC_O_k02neq1} the statement (i) holds. Therefore (i) follows.
\end{proof}

\noindent{\it Proof of Theorem \ref{thm:lp}}. It is immediate from Theorem \ref{thm:diagonal} and Proposition \ref{lem:lp}.  \hfill $\square$

\noindent{\it Proof of Theorem \ref{thm:lt}}. It is immediate from Theorem \ref{thm:diagonal} and Proposition \ref{lem:lt}. \hfill $\square$



\bibliographystyle{amsplain}
\bibliography{MP}
\end{document}